\newtheorem{thm}{Theorem}[section]
\newtheorem{lemma}[thm]{Lemma}
\newtheorem{prop}[thm]{Proposition}
\newtheorem{cor}[thm]{Corollary}
\theoremstyle{definition}
\theoremstyle{remark}
\newtheorem{remark}[thm]{Remark}
\numberwithin{equation}{section}
\DeclareMathOperator{\Mod}{mod}
\newcommand{\mmod}[1]{\;(\Mod{ #1})}
\def\alp{{\alpha}} 
\def\bet{{\beta}}  
\def\del{{\delta}} 
\def\tet{{\theta}}  
\def\kap{{\kappa}}
\def\lam{{\lambda}} \def\Lam{{\Lambda}}
\def\sig{{\sigma}}
\def\ome{{\omega}}  
\def\eps{\varepsilon}
\def\le{\leqslant} \def\ge{\geqslant}
\def\d{{\,{\rm d}}}
\def \sig{{\sigma}}
\def \cont {{\mathrm{cont}}}
\def \bC {\mathbb C}
\def \bN {\mathbb N}
\def \bQ {\mathbb Q}
\def \bR {\mathbb R}
\def \bZ {\mathbb Z}
\def \bT {\mathbb T}
\def \bn {{\mathbf{n}}}
\def \bu {\mathbf u}
\def \bv {\mathbf v}
\def \bx {\mathbf x}
\def \by {\mathbf y}
\def \bz {\mathbf z}
\def \balp {\boldsymbol{\alp}}
\def \fm {\mathfrak m}
\def \fn {\mathfrak n}
\def \fM {\mathfrak M}
\def \cA {\mathcal A}
\def \cB {\mathcal B}
\def \cC {\mathcal C}
\def \cD {\mathcal D}
\def \cF {\mathcal F}
\def \cP {\mathcal P}
\def \cQ {\mathcal Q}
\def \cS {\mathcal S}
\def \ord {\mathrm{ord}}
\def \Li {{\mathrm{Li}}}
\def \N {\mathbb{N}}
\def \R {\mathbb{R}}
\def \T {\mathbb{T}}
\def \Z {\mathbb{Z}}
\def \balpha {\bm{\alpha}}
\def \btheta {\bm{\theta}}
\newcommand{\str}{\mathrm{str}}
\newcommand{\sml}{\mathrm{sml}}
\newcommand{\unf}{\mathrm{unf}}
\begin{document}
	\title[Arithmetic Ramsey theory over the primes]{Arithmetic Ramsey theory over the primes}
	\subjclass[2020]{11B30 (primary); 05D10, 11D72, 11L15 (secondary)}
	\keywords{Arithmetic combinatorics, arithmetic Ramsey theory, Diophantine equations, Hardy--Littlewood method, partition regularity, restriction theory}
	\author{Jonathan Chapman \and Sam Chow}
	
	\address{School of Mathematics, University of Bristol, Bristol, BS8 1UG, UK, and the Heilbronn 
		Institute for Mathematical Research, Bristol, UK}
	\email{jonathan.chapman@bristol.ac.uk}
	\address{Mathematics Institute, Zeeman Building, University of Warwick, Coventry CV4 7AL, United Kingdom}
	\email{Sam.Chow@warwick.ac.uk}
	
	\maketitle
	
	\begin{abstract} 
		We study density and partition properties of polynomial equations in prime variables. We consider equations of the form $a_1h(x_1) + \cdots + a_sh(x_s)=b$, where the $a_i$ and $b$ are fixed coefficients, and $h$ is an arbitrary integer polynomial of degree $d$. Provided there are at least $(1+o(1))d^2$ variables, we establish necessary and sufficient criteria for this equation to have a monochromatic non-constant solution with respect to any finite colouring of the prime numbers. We similarly characterise when such equations admit solutions over any set of primes with positive relative upper density. In both cases, we obtain counting results which provide asymptotically sharp lower bounds for the number of monochromatic or dense solutions in primes. Our main new ingredient is a uniform lower bound on the cardinality of a prime polynomial Bohr set.
	\end{abstract}
	
	\setcounter{tocdepth}{1}
	\tableofcontents
	
	\section{Introduction}
	
	An influential theorem of Szemer\'edi asserts that sets of positive integers $A$ with positive upper density, meaning that
	\begin{equation*}
		\limsup_{N\to\infty}\frac{|A\cap\{1,2,\ldots,N\}|}{N}>0,
	\end{equation*}
	must contain arbitrarily long arithmetic progressions. Green and Tao \cite{GT2008} famously established a version of Szemer\'edi's theorem for the primes. Specifically, writing $\cP:=\{2,3,5,\ldots\}$ for the set of prime numbers, Green and Tao showed that sets $A\subseteq\cP$ satisfying
	\begin{equation}\label{eqn1.1}
		\limsup_{N\to\infty}\frac{|A\cap\{1,2,\ldots,N\}|}{|\cP\cap\{1,2,\ldots,N\}|}>0
	\end{equation}
	contain arbitrarily long arithmetic progressions. In particular, the primes themselves contain arithmetic progressions of any finite length.
	
	One can consider configurations other than arithmetic progressions. We call a system of Diophantine equations \emph{density regular} if it has non-constant solutions over all sets of positive integers with positive upper density. For example, consider a linear homogeneous equation
	\begin{equation}
		\label{eqn1.2}
		a_1 x_1 + \cdots + a_s x_s = 0,
	\end{equation}
	where $s\geqslant 3$ and $a_1,\ldots,a_s$ are non-zero integers. Roth \cite{Rot1954} showed that this equation is density regular if and only if $a_1+\cdots +a_s = 0$. Green \cite{Gre2005A} subsequently proved that Roth's theorem holds over the primes; the equation (\ref{eqn1.2}) has non-constant solutions over any set of primes $A\subseteq\cP$ satisfying (\ref{eqn1.1}) if and only if $a_1+\cdots +a_s = 0$.
	
	A related, weaker notion of regularity is that of \emph{partition regularity}, which refers to systems of Diophantine equations which admit monochromatic non-constant solutions with respect to any finite colouring of the positive integers. By the pigeonhole principle, density regularity implies partition regularity. A foundational result in arithmetic Ramsey theory is \emph{Rado's criterion} \cite[Satz IV]{Rad1933}, which completely characterises partition regularity for finite systems of linear equations. In particular, Rado's criterion reveals that Equation (\ref{eqn1.2}) is partition regular if and only if there exists a non-empty set $I\subseteq\{1,\ldots,s\}$ such that $\sum_{i\in I}a_i = 0$.
	
	L\^e \cite{Le2012} observed that Green and Tao's work provides a characterisation of partition regularity for systems of linear homogeneous equations in shifted primes. For single equations, L\^e proved that the equation (\ref{eqn1.2}) admits monochromatic non-constant solutions with respect to any finite colouring of $\cP+1:=\{p+1:p\in\cP\}$ or $\cP-1$ if and only if there exists a non-empty set $I\subseteq\{1,\ldots,s\}$ such that $\sum_{i\in I}a_i = 0$. Note that there are divisibility obstructions which prevent such a result holding over the set $\cP+c$ for integers $c\notin\{-1,1\}$. For example, the equation $x+y=z$ is partition regular, but if we partition $\cP+c$ into residue classes modulo $q$ for any prime $q$ dividing $c$, then there are no monochromatic solutions to $x+y=z$.
	
	The purpose of this paper is to obtain a complete classification of partition and density regularity over primes for equations in sufficiently many variables of the form
	\begin{equation}\label{eqn1.3}
		a_1 h(x_1) + \cdots + a_s h(x_s) = b,
	\end{equation}
	where $a_1,\ldots,a_s$ are non-zero integers, $b$ is an integer, and $h$ is a polynomial with integer coefficients. We say that (\ref{eqn1.3}) is \emph{partition regular over the primes} if every finite colouring of the prime numbers produces a monochromatic non-constant solution to (\ref{eqn1.3}). Similarly, we call (\ref{eqn1.3}) \emph{density regular over the primes} if (\ref{eqn1.3}) has a non-constant solution over any set of primes $A\subseteq\cP$ which satisfies (\ref{eqn1.1}). For $b=0$, observe that L\^e's result \cite{Le2012} asserts that Rado's condition characterises partition regularity over primes for (\ref{eqn1.3}) whenever $h(x)=x\pm 1$.
	
	In our previous work \cite{CC}, we established necessary and sufficient conditions for partition and density regularity (over $\N$) for all equations (\ref{eqn1.3}) in sufficiently many variables. We observed that it is necessary for partition regularity that $h$ satisfies a certain `intersectivity condition' in order to avoid divisibility obstructions, as alluded to above. For partition regularity over primes, we use the following definition, previously introduced in \cite{Le2014, Rice2013}. An integer polynomial $h$ is \emph{intersective of the second kind} if for each positive integer $n$, there exists an integer $x$ which is coprime to $n$ such that $n$ divides $h(x)$. Observe that any polynomial $h$ satisfying $h(1)=0$ or $h(-1)=0$ is intersective of the second kind. However, one can construct numerous polynomials, such as $(x^2 - 13)(x^2 - 17)(x^2 - 221)$ and $(x^3 - 19)(x^2 + x + 1)$, which are intersective of the second kind but do not have rational zeros (see \cite[\S2]{LS2015}).
	
	Our main result is the following.
	
	\begin{thm}\label{thm1.1}
		Let $d\geqslant 2$ be an integer. There exists a positive integer $s_0(d)$ such that the following is true. 
		Let $h$ be an integer polynomial of degree $d$, and let $s\geqslant s_0(d)$ be an integer. Let $a_1,\ldots,a_s$ be non-zero integers, and let $b$ be an integer.
		\begin{enumerate}
			\item[(PR)]  Equation (\ref{eqn1.3}) is partition regular over the primes if and only if there exists a non-empty set $I\subseteq\{1,\ldots,s\}$ with $\sum_{i\in I}a_i =0$ and an integer $m$ with $b=(a_{1}+\cdots +a_s)m$ such that $h(x) - m$ is an intersective polynomial of the second kind.
			\item[(DR)] Equation (\ref{eqn1.3}) is density regular over the primes if and only if $b=a_1 +\cdots +a_s = 0$.
		\end{enumerate}
		Furthermore, we have $s_0(2)=5$, $s_0(3)\leqslant 9$, and
		\begin{equation}\label{eqn1.4}
			s_0(d) \leqslant d^2-d+2\lfloor \sqrt{2d+2} \rfloor + 1 \qquad (d\geqslant 4).
		\end{equation}
	\end{thm}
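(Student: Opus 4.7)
I would prove the theorem in three stages: establish the necessity of the stated conditions, derive an asymptotic count of prime solutions to (\ref{eqn1.3}) via the Hardy--Littlewood circle method with a $W$-trick and prime restriction theory, and upgrade this count to density and partition regularity through a generalised von Neumann inequality that rests on a uniform lower bound for prime polynomial Bohr sets.

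\emph{Necessity.} For (DR), taking $A=\bigcup_k\cP\cap[N_k,(1+\eps)N_k]$ with rapidly growing $N_k$ gives a subset of $\cP$ of positive relative upper density in which any solution to (\ref{eqn1.3}) has all variables in a common range $[N_k,(1+\eps)N_k]$; the leading-order asymptotic $\sum_i a_i h(p_i)\sim(a_1+\cdots+a_s)N_k^d$ then forces $a_1+\cdots+a_s=0$. Given this, localising to $A=\{p\in\cP:h(p)\equiv c\pmod q\}$ for a prime $q\nmid b\prod_i a_i$ and a suitable residue $c$ forces $b\equiv 0\pmod q$, whence $b=0$. For (PR), Rado's multiplicative colouring (adapted to primes via residues modulo a large prime not dividing any $a_i$) forces the columns condition $\sum_{i\in I}a_i=0$ for some non-empty $I\subseteq\{1,\ldots,s\}$. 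The intersectivity hypothesis is necessary because otherwise, for each admissible $m$, some modulus $n_m$ admits no $x\in(\bZ/n_m\bZ)^{\times}$ with $h(x)\equiv m\pmod{n_m}$, and a composite residue colouring of $\cP$ then kills all non-constant monochromatic solutions.

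\emph{Counting prime solutions.} Take $W=\prod_{p\le w(N)}p$ with $w(N)\to\infty$ slowly, and for residues $r$ coprime to $W$ define
\begin{equation*}
	f_r(\alp)=\sum_{\substack{p\le N\\p\equiv r\pmod W}}(\log p)\,e(\alp h(p)).
\end{equation*}
The intersectivity of $h(x)-m$ together with the Chinese remainder theorem furnishes residues $r_1,\ldots,r_s$ coprime to $W$ with $\sum_i a_i h(r_i)\equiv b\pmod W$; then the weighted prime count for (\ref{eqn1.3}) restricted to these residue classes equals
\begin{equation*}
	\int_0^1 e(-\alp b)\prod_{i=1}^s f_{r_i}(a_i\alp)\,d\alp.
\end{equation*}
A Hardy--Littlewood dissection yields the expected positive singular series times singular integral on the major arcs. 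The minor arcs are controlled by combining Vinogradov/Weyl-type bounds for $\sum_p e(\alp h(p))$ with the Bourgain--Green--Tao restriction estimate for the primes; this is precisely where the threshold $s\ge s_0(d)$ enters, and the specific bounds $s_0(2)=5$, $s_0(3)\le 9$, and (\ref{eqn1.4}) emerge from current sharpest estimates for polynomial exponential sums over primes.

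\emph{Transference and main obstacle.} To deduce density regularity from the counting formula, I would run a restriction/transference argument in the Green--Tao style, which reduces to a generalised von Neumann inequality asserting that the solution count is insensitive to any function of small Gowers/restriction norm. The crucial new analytic input, and the main obstacle, is a uniform lower bound for the \emph{prime polynomial Bohr set} $\{p\in\cP\cap[1,N]:\|h(p)\bfalp\|_{\T}<\delta\}$ in terms of $\delta$ and $|\cP\cap[1,N]|$, analogous to Green's linear Bohr set bound \cite{Gre2005A} but for polynomial phases. This will likely require sharp Weyl bounds for polynomial prime exponential sums in arithmetic progressions, coupled with a Fourier/Plancherel-type extraction of Bohr set density; polynomial phases do not interact with the primes as cleanly as linear ones, and a naive majorant/Fourier argument appears to fall short. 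Granted this lower bound, (DR) follows from the counting formula applied on a single residue class, and (PR) follows by pigeonholing on residue classes modulo $W$ and applying (DR) inside the largest monochromatic sub-colour within a residue class where Rado's condition and intersectivity align.
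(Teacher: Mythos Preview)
Your broad outline---necessity via colouring arguments, a circle-method count with $W$-trick and restriction, then transference---matches the paper's architecture, and you correctly flag the prime polynomial Bohr set lower bound as the main new ingredient. But there is a genuine structural gap in your final paragraph, and a misplacement of where that Bohr set bound actually enters.

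\textbf{The gap: (PR) does not reduce to (DR).} Your plan to deduce (PR) ``by pigeonholing on residue classes modulo $W$ and applying (DR) inside the largest monochromatic sub-colour'' cannot work. The hypothesis for (DR) is $a_1+\cdots+a_s=0$, whereas (PR) only assumes $\sum_{i\in I}a_i=0$ for some proper subset $I$; in general the full coefficient sum is nonzero (think of Schur's equation $x_1-x_2+x_3=0$). So the densest colour class, however obtained, is not a legitimate input to any (DR)-type statement for the original equation. The paper resolves this by proving a genuinely hybrid statement (Theorem~\ref{thm2.5}): one finds a colour class $\cC_k$ such that for \emph{every} dense $A\subseteq\cP_X$ there are many solutions with $\bx\in A^{|I|}$ and $\by\in\cC_k^{s-|I|}$. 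The $\bx$-variables (indexed by $I$) are treated by density methods after a linearisation step, while the $\by$-variables are handled via an arithmetic regularity lemma: the structured part of $1_{\cA_i}$ is approximately constant on Bohr sets, so one needs many primes $p\equiv r_D\pmod D$ with $h(p)/\lambda(D)$ landing in a prescribed Bohr neighbourhood. This is precisely where the prime polynomial Bohr set bound is used---not in the transference/von Neumann step, which relies only on Fourier decay and restriction.

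\textbf{Two further inaccuracies.} First, the linearisation is not optional bookkeeping: the paper passes from (\ref{eqn1.3}) to $L_1(\bn)=L_2(h_D(\bz))$ via Lucier's auxiliary intersective polynomials $h_D$, and the transference runs on this linearised equation (Theorem~\ref{thm2.7}), not on the original one. Second, the threshold $s_0(d)=2T(d)+1$ is governed by mean-value estimates for $\sum_{x\le X}e(\alp h(x))$ over \emph{all integers} (Hua, Wooley), not by exponential sums over primes; the prime information enters only through Fourier decay of the majorant and the Siegel--Walfisz theorem.
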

	
	\begin{remark}
		The integer $s_0(d)$, which is defined explicitly in \S\ref{sec2}, was previously introduced in \cite{CC} to study partition and density regularity of equations (\ref{eqn1.3}).
	\end{remark}
	
	\begin{remark}
		The conditions in the partition regularity statement are trivially satisfied when $b=a_1+\cdots +a_s = 0$. Indeed, in this case, we can take $I=\{1,\ldots,s\}$ and $m=h(1)$. It then follows that $h(x) - m$ is the zero polynomial, which is trivially intersective of the second kind.
	\end{remark}
	
	In our previous work \cite{CC}, we obtained asymptotically sharp lower bounds for the number of solutions to (\ref{eqn1.3}) lying in dense or monochromatic subsets of the positive integers. Our second main result is the following analogous counting version of Theorem \ref{thm1.1} for subsets of $\cP_N:=\{p\leqslant N: p \text{ prime}\}$.
	
	\begin{thm}\label{thm1.4}
		Let $d\geqslant 2$ be an integer, and let $s_0(d)$ be as given in Theorem \ref{thm1.1}. 
		Let $h$ be an integer polynomial of degree $d$. Let $s\geqslant s_0(d)$ be an integer. Let $a_1,\ldots,a_s$ be non-zero integers, and let $b$ be an integer. Given a set of integers $\cA$, write
		\begin{equation*}
			S(\cA) = \{(x_{1},\ldots,x_s)\in\cA^s : x_i\neq x_j \text{ for all } i\neq j, \text{ and }
			a_1h(x_1) + \cdots + a_s h(x_s) =0\}.
		\end{equation*}
		\begin{enumerate}
			
			\item[(PR)]  Suppose there exists a non-empty set $I\subseteq\{1,\ldots,s\}$ with $\sum_{i\in I}a_i =0$ and an integer $m$ with $b=(a_{1}+\cdots +a_s)m$ such that $h(x) - m$ is an intersective polynomial of the second kind. Then for any positive integer $r$ there exists a positive real number $c_1 = c_1(h;a_1,\ldots,a_s,b;r)$ and a positive integer $N_1 = N_1(h;a_1,\ldots,a_s,b;r)$ such that the following is true for any positive integer $N\geqslant N_1$. Given any $r$-colouring $\cP_N = \cC_1 \cup\cdots\cup \cC_r$,
			there exists $k\in\{1,\ldots,r\}$ such that
			$|S(\cC_k)|\geqslant c_1N^{-d}(N/\log N)^{s}$.
			
			\item[(DR)] If $a_1 +\cdots +a_s = b = 0$, then for any positive real number $\delta>0$ there exists a positive real number $c_2= c_2(h;a_1,\ldots,a_s;\delta)$ and a positive integer $N_2 = N_2(h;a_1,\ldots,a_s;\delta)$ such that the following is true for any positive integer $N\geqslant N_2$. Given any set $A\subseteq\cP_N$ satisfying $|A|\geqslant\delta |\cP_N|$, we have $|S(A)|\geqslant c_2N^{-d}(N/\log N)^{s}$.
		\end{enumerate}
	\end{thm}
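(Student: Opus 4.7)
My plan is to combine the Hardy--Littlewood circle method with restriction theory for primes, following the broad template of Green--Tao \cite{GT2008} and L\^e \cite{Le2012}, while inheriting much of the structural analysis from the authors' previous work \cite{CC} over $\N$ and invoking the uniform lower bound on prime polynomial Bohr sets highlighted in the abstract. The arguments for (PR) and (DR) proceed in parallel; in (PR), pigeonhole first selects a colour class $\cC_k$ of density $\ge 1/r$, after which the analysis is identical to the (DR) case with $\delta = 1/r$.

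First I would perform a $W$-trick tailored to the intersective hypothesis. Choose $w = w(N) \to \infty$ slowly, set $W = \prod_{p \le w} p$, and use the fact that $h(x) - m$ is intersective of the second kind to select, via the Chinese Remainder Theorem, a residue $A \pmod{W}$ with $\gcd(A,W) = 1$ such that $W \mid h(Wn + A) - m$ for every $n \in \Z$. Then $\tilde h(n) := (h(Wn + A) - m)/W$ is an integer polynomial of degree $d$, and restricting (\ref{eqn1.3}) to primes $x_i = W n_i + A$ in this class converts the equation to $\sum_i a_i \tilde h(n_i) = 0$, since $b - (a_1 + \cdots + a_s)m = 0$. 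To the colour class $\cC_k$ I attach the prime-weighted function
\[
f_k(n) := \frac{\phi(W)}{W}\log(Wn + A) \cdot \mathbf{1}_{Wn + A \in \cC_k \cap \cP_N},
\]
supported on $n \le N/W$, dominated by a Green--Tao pseudorandom majorant, and with mean value $\gg 1/r$ for some $k$.

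The count of solutions in this residue class then equals $\int_0^1 \prod_{i=1}^s F_{a_i}(\alpha) \d\alpha$, where $F_a(\alpha) := \sum_n f_k(n) e(a \tilde h(n) \alpha)$. I would split $[0,1)$ into major and minor arcs at an appropriate scale. On the minor arcs, I would apply a restriction-type $L^p$ estimate for $\sum_p \Lambda(p) e(\alpha h(p))$ in the spirit of Green--Tao, combined with a Vinogradov-type pointwise bound for the polynomial exponential sum; the hypothesis $s \ge s_0(d)$ is calibrated precisely so that this input delivers the required power saving. On the major arcs, the count factorises into a singular series, a singular integral, and a density factor of size $(\sum_n f_k)^s$; the intersective choice of $A$ ensures uniform positivity of the singular series, while the singular integral supplies the factor of order $N^{-d}$. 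The diagonal contribution, in which some $x_i = x_j$, is of strictly smaller order for $s \ge s_0(d)$ and can be discarded. Undoing the $W$-trick restores the prime density $1/\log N$ per variable, yielding $c N^{-d}(N/\log N)^s$.

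The central new input, and where I expect the main obstacle to lie, is the uniform lower bound on prime polynomial Bohr sets flagged in the abstract: for tuples $(\alp_1, \ldots, \alp_k)$, an estimate of the shape
\[
\#\{p \le N : \|\alp_j h(p)\| < \delta \text{ for all } j\} \gg_k \delta^k \cdot N/\log N,
\]
uniform in the $W$-trick modulus, fuels both the restriction estimate and the local analysis on the major arcs. It cannot be transferred directly from \cite{CC}, because the multiplicative constraints imposed by restricting to prime variables create obstructions absent in the integer setting, and this is the piece that must be established by genuinely new arithmetic input drawing on sieve theory and exponential-sum bounds over primes. Once this estimate is in hand, the circle-method machinery of \cite{CC} carries over \emph{mutatis mutandis} with $\Lambda$ replacing $\mathbf{1}_{\N}$ throughout.
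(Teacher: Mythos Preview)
Your reduction of (PR) to (DR) by pigeonholing a dense colour class is incorrect, and this is a genuine gap. If $\sum_{i=1}^s a_i \ne 0$, the equation $\sum_i a_i(h(x_i)-m) = 0$ is \emph{not} density regular over the primes (this is exactly the necessary condition in (DR) of Corollary~\ref{cor2.4}), so choosing a colour class of density at least $1/r$ and treating it as a dense set does not let you invoke (DR). What the (PR) hypothesis gives you is only $\sum_{i\in I}a_i=0$ for a proper subset $I$, and this forces the asymmetric structure $L_1(h(\bx)) = L_2(h(\by))$ with $L_1$ built from the coefficients $\{a_i : i\in I\}$ and $L_2$ from the remainder. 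One then needs a single colour class $\cC_k$ that serves simultaneously for the $\by$-variables \emph{and} is dense enough to supply the $\bx$-variables; this is achieved in the paper via Theorem~\ref{thm2.5} together with Prendiville's cleaving argument (the iterated use of the map $\del\mapsto\tau_0(\del)$), not by a one-step pigeonhole.

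Secondly, you have misidentified where the prime polynomial Bohr set bound enters. In the paper it is not an input to restriction estimates or major-arc analysis of the original equation. Rather, the argument first transfers (via Fourier decay and restriction) to a \emph{linearised} equation $L_1(\bn)=L_2(h_D(\bz))$, and only then applies an arithmetic regularity lemma; the Bohr-set estimate (Theorem~\ref{thm9.2}) is used at the very end to show that the set of $z$ for which $\|h_D(z)\btheta\|$ is small, with $r_D+Dz$ prime, has the expected density, so that the structured part $f_{\str}$ dominates the count. Your sketch, which attacks the original polynomial equation in prime variables by a direct major/minor arc split, bypasses the linearisation and regularity steps entirely; it is unclear how a Bohr-set input would substitute for them, and no amount of sieve or exponential-sum input will repair the false (PR)$\Rightarrow$(DR) reduction above.
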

	
	\subsection*{Methods}
	Our goal is to find many monochromatic/dense solutions to
	\[
	L_1(h(\bx)) = L_2(h(\by))
	\]
	for some linear forms $L_1$ and $L_2$, where $L_1(1,\ldots,1) = 0$. In Fourier space, after normalisation and accounting for small prime moduli (the $W$-trick), the image of $h$ can be shown to behave like $\bN$ for our count. The upshot is that it suffices to count solutions to an equation
	\[
	L_1(\bn) = L_2(h_D(\bz)),
	\]
	where $h_D$ is a related polynomial that is intersective of the second kind. This Fourier-analytic transference principle was introduced by Green \cite{Gre2005A} to show that relatively dense sets of primes contain three-term arithmetic progressions, and is based on Fourier decay and restriction (from harmonic analysis). The transference argument is sketched in further detail in the next section, and formalised in the five sections afterwards. It can be regarded as a version of the Hardy--Littlewood circle method, and estimates for prime Weyl sums feature prominently in our work.
	
	To count monochromatic/dense solutions to our linearised equation, we use an arithmetic regularity lemma. This enables us to decompose the indicator functions of our colour classes into three parts, the first of which exhibits quasi-periodic structure and ultimately dominates the count. Using this quasi-periodicity to obtain a large count requires us to show that polynomials evaluated at primes are dense in Bohr sets, in a suitably uniform sense. The colour class that we choose maximises this density.
	
	Our main novelty is to uniformly bound from below the density of a `prime polynomial Bohr set'. This is accomplished by induction on the dimension, beginning with a result of Harman \cite{Har1993} from Diophantine approximation. The latter brings prime Weyl sums into play. The requisite data for these comes partly from the analysis in the earlier sections, partly from Lucier's pioneering work on intersective polynomials \cite{Luc2006}, and partly from the investigations of L\^e--Spencer \cite{LS2014} and Rice \cite{Rice2013} into polynomials that are intersective of the second kind.
	
	\subsection*{Organisation}
	
	We begin in \S\ref{sec2} with some preliminary results. We prove the `only if' parts of Theorem \ref{thm1.1} by establishing necessary conditions for (\ref{eqn1.3}) to be partition or density regular over the primes. We synthesise both the density and partition statements presented in Theorem \ref{thm1.4} into a single result, Theorem \ref{thm2.5}, on counting solutions to certain linear form equations. We also recall the `auxiliary intersective polynomials' of Lucier \cite{Luc2006} and use them to state Theorem \ref{thm2.7}, which is a `linearised' version of Theorem \ref{thm2.5}. Finally, we provide a sketch of the subsequent transference argument we will use to deduce Theorem \ref{thm2.5} from Theorem \ref{thm2.7}. 
	
	In \S\ref{sec3}, we introduce formally the `linearisation' procedure which we will use to infer Theorem \ref{thm2.5} from Theorem \ref{thm2.7}. We apply the $W$-trick and introduce the majorant $\nu$, the latter of which is the focus of our investigations in \S\ref{sec5} and \S\ref{sec6}.
	To expedite this process, we record in \S\ref{sec4} some general results on exponential sums over primes. These will later be used in \S\ref{sec5} and \S\ref{sec9}.
	
	In \S\ref{sec5}, we study the Fourier transform of our majorant $\nu$ by using the Hardy--Littlewood circle method. We follow this in \S\ref{sec6} by investigating the restriction properties of $\nu$ and a related majorant $\mu_D$. The Fourier decay and restriction estimates we obtain in these sections are then applied in \S\ref{sec7} to execute the transference principle. This completes the deduction of Theorem \ref{thm2.5} from Theorem \ref{thm2.7}.
	
	The focus of the final two sections is to prove Theorem \ref{thm2.7} using an arithmetic regularity lemma. In \S\ref{sec8}, we begin this argument by first modifying Theorem \ref{thm2.7} into a new result (Theorem \ref{thm8.1}) which is more amenable to arithmetic regularity methods.  This reduces matters to counting primes in `polynomial Bohr sets'. Finally, in \S\ref{sec9}, we prove Theorem \ref{thm8.1} by establishing density estimates for these prime polynomial Bohr sets. 
	
	\subsection*{Notation} 
	Let $\N$ denote the set of positive integers, and write $\cP :=\{2,3,5,7,11,\ldots\}$ for the set of prime numbers. For each prime $p$, let $\bQ_p$ and $\bZ_p$ denote the $p$-adic numbers and the $p$-adic integers respectively. Given a real number $X>0$, we write $[X] := \{n\in\N:n\leqslant X\}$ and $\cP_X :=\cP\cap[X]$.
	Set $\bT = [0,1]$. For each $d\in\N$ and $\balpha=(\alpha_1,\ldots,\alpha_d)\in\R^d$, we define
	\begin{equation*}
		\lVert \balpha\rVert := \max_{1\leqslant i\leqslant d}\min_{n\in\Z}|\alpha_i - n| = \min_{\bn\in\Z^d}\lVert \balpha - \bn\rVert_\infty.
	\end{equation*}
	For $q \in \bN$ and $x \in \bR$, we write $e(x) = e^{2 \pi i x}$ and $e_q(x) = e(x/q)$. 
	For $h(x) \in \bZ[x]$ and $\bx = (x_1,\ldots,x_s)$, where $s \in \bN$, we abbreviate $h(\bx) = (h(x_1),\ldots,h(x_s))$. 
	If $L$ is a polynomial with integer coefficients, we write $\gcd(L)$ for the greatest common divisor of its coefficients. The letter $\eps$ denotes a small, positive constant, whose value is allowed to differ between separate occurrences. We employ the Vinogradov and Bachmann--Landau asymptotic notations.
	In any statement in which $\eps$ appears, we assert that the statement holds for all sufficiently small $\eps>0$.
	For a finitely supported function $f: \bZ \to \bC$, the \emph{Fourier transform} $\hat{f}$ is defined by
	\begin{equation*}
		\hat f(\alp) := \sum_{n \in \bZ} f(n) e(\alp n) \qquad (\alp \in \bR).   
	\end{equation*}
	Given a real-valued function $G$ which is bounded over a closed interval $[a,b]$, we write
	\begin{equation*}
		\lVert G\rVert_{L^\infty[a,b]}:=\sup_{a\leqslant t\leqslant b}|G(t)|.
	\end{equation*}
	If $G$ is continuously differentiable on an open interval containing $[a,b]$, then we define
	\begin{equation*}
		\lVert G\rVert_{\cS[a,b]} := \lVert G\rVert_{L^{\infty}[a,b]} + \lVert(b-a) G'\rVert_{L^{\infty}[a,b]}.
	\end{equation*}
	
	\subsection*{Acknowledgements} 
	JC is supported by the Heilbronn Institute for Mathematical Research. SC thanks the University of Bristol for their kind hospitality on various occasions when this work was being discussed.
	
	\section{Preliminaries}
	\label{sec2}
	
	\subsection{Useful ingredients}
	
	We will make repeated use the Siegel--Walfisz theorem \cite[Lemma 7.14]{Hua1965}, which we now state for convenience. Recall that the logarithmic integral is given by
	\[
	\Li(x) = \int_2^x \frac{\d t}{\log t} \qquad (x \ge 2).
	\]
	
	\begin{thm} 
		[Siegel--Walfisz theorem]
		\label{thm2.1}
		Let $P \ge 2$, and write $L = \log P$. Let $A > 0$. Then there exists $c = c(A) > 0$ such that the following is true. Let $n,q,a \in \bZ$ with
		\[
		1 \le n \le P, \qquad
		1 \le q \le L^A.
		\]
		Then
		\[
		\# \{ p \le n: p \equiv a \mmod q \} = \frac{\Li(n)}{\varphi(q)}
		+ O(Pe^{-c \sqrt{\log L}}).
		\]
	\end{thm}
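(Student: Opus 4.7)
The statement is the classical Siegel--Walfisz prime number theorem in arithmetic progressions, and my plan is to follow the standard character-theoretic route via Dirichlet $L$-functions. First, using the orthogonality of characters, rewrite the Chebyshev function
\[
\psi(n;q,a) := \sum_{\substack{m \le n \\ m \equiv a \,(\Mod q)}} \Lambda(m)
= \frac{1}{\varphi(q)} \sum_{\chi \Mod q} \bar\chi(a)\, \psi(n,\chi),
\qquad \psi(n,\chi) := \sum_{m \le n} \chi(m)\Lambda(m).
\]
The principal character contributes $n/\varphi(q) + O(\log^2(qn))$ after accounting for the finitely many primes dividing $q$, which produces the main term $\Li(n)/\varphi(q)$ after partial summation. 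The work is therefore to control $\psi(n,\chi)$ for non-principal $\chi$.

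For each non-principal character $\chi$ induced from a primitive character modulo $q^* \mid q$, the truncated explicit formula (obtained from Perron's formula and a contour shift past the critical strip) gives
\[
\psi(n,\chi) = -\sum_{\substack{\rho : L(\rho,\chi^*) = 0 \\ |\Im \rho| \le T}} \frac{n^\rho}{\rho} + O\!\left(\frac{n (\log(qnT))^2}{T}\right),
\]
with $T$ to be chosen. Next, apply the classical zero-free region: there exists an absolute $c_0 > 0$ such that $L(s,\chi^*) \ne 0$ in the region $\sigma \ge 1 - c_0 / \log(q(|t|+2))$, with the possible exception of a single real (Siegel) zero $\beta$ attached to a real primitive character. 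For the non-exceptional zeros one uses $|\rho| \ge |\Im \rho|$ in the tail and the explicit Vinogradov--Korobov or classical de la Vall\'ee Poussin bound to obtain a decay factor $\exp(-c_1 \sqrt{\log n})$ for the zero-sum, provided $q \le \exp(c_2 \sqrt{\log n})$; choosing $T$ a suitable power of $\log n$ suffices.

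The main obstacle, and the source of the ineffectivity of the constant $c(A)$, is the possible Siegel zero. Here one invokes Siegel's theorem: for every $\eps > 0$ there exists an ineffective $C(\eps) > 0$ such that every real zero $\beta$ of $L(s,\chi^*)$ satisfies $\beta < 1 - C(\eps) (q^*)^{-\eps}$. Applying this with $\eps = 1/(2A)$ and recalling that $q \le L^A = (\log P)^A$, the Siegel contribution $n^\beta/\beta$ is bounded by $n \exp(-C(\eps) (\log n) (\log P)^{-A\eps})$, which for $n \le P$ absorbs into $n \exp(-c \sqrt{\log L})$ after adjusting $c = c(A)$. Combining the principal-character main term, the explicit formula estimates for non-principal characters, and Siegel's bound on the exceptional zero, and finally passing from $\psi$ to $\pi$ by Abel summation (noting that the prime-power contribution is $O(\sqrt{n})$), yields the stated estimate. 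The hard part is entirely the Siegel zero step, which forces the dependence of $c$ on $A$ to be ineffective; everything else is a careful bookkeeping of standard contour integrals and zero-free regions.
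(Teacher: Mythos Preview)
The paper does not prove Theorem \ref{thm2.1}; it is quoted from Hua's monograph \cite[Lemma 7.14]{Hua1965} and used as a black box throughout. Your sketch is the standard textbook derivation (orthogonality of Dirichlet characters, truncated explicit formula, classical zero-free region, and Siegel's ineffective bound for the exceptional zero), and it is correct in outline.

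Two small points worth tightening. First, your bound for the non-exceptional zero sum requires $q \le \exp(c_2\sqrt{\log n})$, which under the stated hypothesis $q \le (\log P)^A$ only holds once $n$ is moderately large; for small $n$ you must invoke the trivial bound $\pi(n;q,a) + \Li(n)/\varphi(q) = O(n)$ separately and absorb it into the (very generous) error term $O(Pe^{-c\sqrt{\log L}})$. Second, the character decomposition you wrote presupposes $(a,q)=1$; the paper's statement omits this hypothesis, but the non-coprime case contributes at most one prime and $\Li(n)/\varphi(q) \ll P/\log P$, both of which are again swallowed by the stated error term. With these caveats, your argument delivers the classical Siegel--Walfisz bound $O(ne^{-c'\sqrt{\log n}})$, which is strictly stronger than what is asserted.
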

	
	We will also make repeated use of \cite[Lemma 9]{Rice2013}, which we state below. Owing to an inaccuracy in the published version of Rice's paper, we cite a later arXiv version. This is also explained in a remark immediately following \cite[Lemma 9]{Rice2013}. The fact that $C$ only depends on the degree comes from the proof.
	
	\begin{lemma}
		\label{lem2.2}
		For any integer $k\ge 2$, there exists $C = C(k) > 0$ such that the following holds. Let $g(x) = a_k x^k + \cdots + a_1 x + a_0 \in \bZ[x]$, and let $W, b \in \bZ$. Let $a \in \bZ$ and $q \in \bN$ be coprime. Let $q = q_1 q_2$, where $q_2$ is the greatest divisor of $q$ that is coprime to $W$, and let $\cont(g) = \gcd(a_1,\ldots,a_k)$. Then 
		\[
		\left|
		\sum_{\substack{\ell=0\\(W\ell+b,q)=1}}^{q-1}
		e_q(ag(\ell))
		\right| \le C^{\ome(q)}
		\left(
		\gcd(\cont(g),q_1) \gcd(a_k,q_2)
		\right)^{1/k}
		q^{1-1/k}.
		\]
	\end{lemma}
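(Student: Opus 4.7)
The plan is to apply the Chinese Remainder Theorem to split the modulus across the decomposition $q = q_1 q_2$ (the factors being coprime by construction), reducing the sum to a product of two complete exponential sums, each of which is then bounded by a classical Weyl-type estimate. Writing $a/q \equiv a^{(1)}/q_1 + a^{(2)}/q_2 \pmod 1$ with $\gcd(a^{(i)}, q_i) = 1$, and parametrising $\ell$ via CRT as $(\ell_1, \ell_2) \in [0, q_1 - 1] \times [0, q_2 - 1]$, the sum factors as $S_1 S_2$, where $S_i$ is a residue-weighted complete polynomial exponential sum modulo $q_i$.

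For $S_1$ modulo $q_1$: every prime $p \mid q_1$ also divides $W$, so $W\ell + b \equiv b \pmod{p}$, and hence the coprimality condition $(W\ell + b, q_1) = 1$ collapses to $(b, q_1) = 1$, independent of $\ell_1$ (if this fails, $S_1 = 0$ trivially). What remains is the pure complete sum $\sum_{\ell_1} e_{q_1}(a^{(1)} g(\ell_1))$. I would write $g(\ell) = g(0) + \cont(g) \tilde g(\ell)$ with $\tilde g$ of degree $k$ and content $1$, pull out the constant phase $e_{q_1}(a^{(1)} g(0))$, extract the common factor $\gcd(a^{(1)} \cont(g), q_1) = \gcd(\cont(g), q_1)$ from the argument, and apply the classical Weyl estimate for complete sums of coprime content (obtained from Weil's bound by multiplicative reduction to prime-power moduli, with constant depending only on the degree) to obtain $|S_1| \le C^{\omega(q_1)} \gcd(\cont(g), q_1)^{1/k} q_1^{1-1/k}$.

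For $S_2$ modulo $q_2$: since $\gcd(W, q_2) = 1$, the substitution $n \equiv W\ell_2 + b \pmod{q_2}$ rewrites $S_2$ as $\sum_{(n, q_2) = 1} e_{q_2}(a^{(2)} \tilde g(n))$, where $\tilde g(n) := g(W^{-1}(n - b))$ has leading coefficient $a_k W^{-k} \pmod{q_2}$. I would then remove the coprimality condition via Möbius inversion, substitute $n = dm$ for each $d \mid q_2$, and Taylor-expand $\tilde g(dm)$ in $m$; the exponential then reduces from modulus $q_2$ to modulus $q_2/d$ and produces a polynomial in $m$ of degree $k$ with leading coefficient $a_k W^{-k} d^{k-1}$. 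Applying the complete-sum bound to this sum modulo $q_2/d$ yields a quantity controlled by $\gcd(a_k d^{k-1}, q_2/d)^{1/k} (q_2/d)^{1 - 1/k}$, and the algebraic identity $d^{(k-1)/k} \cdot (q_2/d)^{(k-1)/k} = q_2^{(k-1)/k}$ eliminates the $d$-dependence and collapses this to $\gcd(a_k, q_2)^{1/k} q_2^{1 - 1/k}$ up to constants; summing the $\le 2^{\omega(q_2)}$ non-vanishing Möbius terms produces $|S_2| \le C^{\omega(q_2)} \gcd(a_k, q_2)^{1/k} q_2^{1 - 1/k}$. Multiplying $|S_1||S_2|$ gives the claimed bound, since $\omega(q) = \omega(q_1) + \omega(q_2)$ and $q = q_1 q_2$.

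The main obstacle is engineering the cancellation at the $q_2$ stage: one must verify that the powers of $d$ introduced by the Möbius dilation combine with the reduced modulus $q_2/d$ in precisely the right way to absorb $d$ and leave a clean bound in $\gcd(a_k, q_2)$ and $q_2$ alone — any slippage here would produce a factor of $d^{1/k}$ or similar that one cannot sum over $d \mid q_2$ without losing the sharp exponent $1 - 1/k$. A secondary technical requirement is to cite the complete-sum Weyl estimate in a form which exposes both the leading-coefficient dependence $\gcd(\cdot, q)^{1/k}$ and the $\omega(q)$-growth with constant depending only on $k$; this is a standard consequence of Weil's bound for prime moduli, extended multiplicatively via the prime-power analysis of Hua, but the exact statement must be invoked carefully so as to match the form claimed in the lemma.
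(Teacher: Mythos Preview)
The paper does not prove this lemma; it simply quotes it as \cite[Lemma 9]{Rice2013}, noting that the dependence of $C$ on $k$ alone is visible from Rice's proof. Your sketch is essentially a reconstruction of that proof: the CRT factorisation $S = S_1 S_2$ is exactly what Rice uses (the paper even cites this step explicitly in the proof of Lemma~\ref{lem5.5}), and the treatment of each factor via the classical complete-sum bound of Hua/Weyl type is standard.

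Your argument is correct in outline, with one point that deserves more care. In the $q_2$ step, the claim that the $d$-dependence ``collapses'' is not a pure algebraic identity but an inequality: after the reduction to modulus $q_2/d$, the bound you obtain is $C^{\omega(q_2/d)} \gcd(a_k d^{k-1}, q_2/d)^{1/k} (q_2/d)^{1-1/k}$, and you need
\[
\gcd(a_k d^{k-1}, q_2/d) \le d^{k-1} \gcd(a_k, q_2)
\]
to conclude. This is true (check it prime by prime, using that $d$ is squarefree since $\mu(d) \ne 0$), and then $d^{(k-1)/k} (q_2/d)^{1-1/k} = q_2^{1-1/k}$ gives the uniform bound you want, summed over $2^{\omega(q_2)}$ divisors. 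You should state this inequality explicitly rather than calling it an identity. For $S_1$, the ``extraction of the common factor'' is the observation that if $d_1 = \gcd(\cont(g), q_1)$ then the sum has period $q_1/d_1$, so it equals $d_1$ times a complete sum over $\bZ/(q_1/d_1)\bZ$ with content coprime to the modulus; the resulting bound $d_1 (q_1/d_1)^{1-1/k} = d_1^{1/k} q_1^{1-1/k}$ is exactly what is claimed.
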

	
	\subsection{Necessary conditions}
	
	We now provide necessary conditions for Equation (\ref{eqn1.3}) to be partition or density regular over the primes. To state our results, we recall that an integer polynomial $h$ is called \emph{intersective} (or \emph{intersective of the first kind)} if for each $n\in\N$, there exists $x\in\Z$ such that $h(x)\equiv 0\mmod{n}$. We call $h$ \emph{intersective of the second kind} if this statement holds under the additional condition that such an $x$ can be found which is coprime to $n$.
	The following lemma demonstrates that intersectivity is a necessary condition for partition regularity for general polynomial equations.
	
	\begin{lemma}
		\label{lem2.3}
		Let $s\in\N$ and let $F\in\Z[x_1,\ldots,x_s]$. Consider the equation
		\begin{equation}
			\label{eqn2.1}
			F(x_1,\ldots,x_s) = 0.
		\end{equation}
		\begin{enumerate}
			\item[(PR)] If (\ref{eqn2.1}) is partition regular (over the primes), then the single-variable polynomial $F(x,\ldots,x)\in\Z[x]$ is intersective (of the second kind).
			\item[(DR)] If (\ref{eqn2.1}) is density regular or density regular over the primes, then $F(x,\ldots,x)$ is the zero polynomial.
		\end{enumerate}
	\end{lemma}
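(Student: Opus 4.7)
The plan is to exploit residue-class obstructions in both parts. For (PR) I will colour the ambient set ($\bN$ or $\cP$) by residue classes modulo a suitably chosen modulus $n$, and show that if $g(x) := F(x,\ldots,x)$ fails the relevant intersectivity condition then no colour class can contain a monochromatic (non-constant) solution. For (DR) I will take $A$ to be a single residue class and argue similarly, using the prime number theorem in arithmetic progressions to guarantee positive relative density in the prime case.

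First I treat (PR). Assume for contradiction that $g$ is not intersective (over $\bN$), or not intersective of the second kind (over $\cP$). Then there exists $n \in \bN$ such that $g(r) \not\equiv 0 \mmod n$ for every $r \in \bZ$ (respectively, every $r$ with $\gcd(r,n) = 1$). In the $\bN$ case, colour $\bN$ by residue classes modulo $n$. Any monochromatic tuple $(x_1,\ldots,x_s)$ has $x_i \equiv r \mmod n$ for some fixed $r$, and hence
\[
F(x_1,\ldots,x_s) \equiv F(r,\ldots,r) = g(r) \not\equiv 0 \mmod n,
\]
contradicting (\ref{eqn2.1}). In the $\cP$ case I additionally place each prime $p \le n$ in its own singleton colour class; any prime $p > n$ is then automatically coprime to $n$, so a monochromatic non-constant solution must consist entirely of primes $p_i > n$ sharing a common residue $r$ with $\gcd(r,n) = 1$, and the same congruence yields the contradiction.

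Next I treat (DR). Assume $g \not\equiv 0$ in $\bZ[x]$. Choose a prime $q$ exceeding $\deg g + 1$ and not dividing any non-zero coefficient of $g$; then $g$ reduces to a non-zero element of $\bF_q[x]$ of degree $\deg g$ and has at most $\deg g$ roots in $\bF_q$, so the set $\{r \in \{1,\ldots,q-1\} : g(r) \not\equiv 0 \mmod q\}$ has cardinality at least $q - 1 - \deg g > 0$. Pick any such $r$, and set $A := \{m \in \bN : m \equiv r \mmod q\}$ in the $\bN$ case and $A := \{p \in \cP : p \equiv r \mmod q\}$ in the primes case. The former has density $1/q$; for the latter, the Siegel--Walfisz theorem (Theorem \ref{thm2.1}) with $q$ fixed yields $|A \cap [N]|/|\cP_N| \to 1/\varphi(q) > 0$, so $A$ satisfies (\ref{eqn1.1}). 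Any solution $(x_1,\ldots,x_s) \in A^s$ to $F = 0$ would force $g(r) \equiv 0 \mmod q$, contradicting the choice of $r$, so $A$ witnesses the failure of density regularity.

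No serious obstacle arises; the only delicate point is the use of singleton colour classes for the primes $p \le n$ in (PR) over $\cP$, which is what allows the intersective-of-the-second-kind hypothesis (rather than the ordinary intersective one) to be brought to bear by ensuring that the common residue $r$ in any monochromatic non-constant solution is coprime to $n$.
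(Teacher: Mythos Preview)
Your proof is correct. The (DR) argument is essentially the contrapositive of the paper's: the paper fixes an arbitrary $m \in \bN$ and uses residue classes modulo primes $p \nmid m$ to show $p \mid g(m)$ for infinitely many $p$, hence $g(m)=0$; you instead pick a single well-chosen prime $q$ and a residue $r$ with $g(r)\not\equiv 0 \mmod q$ to exhibit a dense set with no solutions. These are the same idea phrased in opposite directions.

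For (PR) over the primes the two arguments genuinely diverge. The paper colours $\cP$ by residues modulo $n$ for each $n$, observes that in a non-constant monochromatic solution the primes are not all equal, and then restricts to prime-power moduli $n=p^k$ so that at least one of the primes (hence the common residue $t$) must be coprime to $n$; the Chinese remainder theorem then assembles these local conclusions. Your device of isolating each prime $p \le n$ in its own singleton colour class is a neat alternative: it forces any non-constant monochromatic tuple into a residue class consisting solely of primes exceeding $n$, which are automatically coprime to $n$, and thereby delivers the coprimality of the common residue in one stroke without the CRT detour. The cost is a slightly larger (but still finite) palette; the benefit is that the argument works directly for arbitrary $n$ rather than prime powers.
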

	
	\begin{proof}
		Suppose (\ref{eqn2.1}) is partition regular. Let $n\in\N$ and consider the $n$-colouring of $\N$ defined by partitioning $\N$ into distinct residue classes modulo $n$. The existence of a monochromatic solution to (\ref{eqn2.1}) with respect to this colouring implies that $F(t,\ldots,t)\equiv 0 \mmod{n}$ holds for some $t\in[n]$. As $n$ was arbitrary, it follows that $F(x,\ldots,x)$ is intersective.
		
		Now suppose (\ref{eqn2.1}) is partition regular over the primes. Let $n\in\N$. As before, we partition into residue classes modulo $n$ and infer the existence of $t\in[n]$ and primes $p_1,\ldots,p_s$, which are not all equal, with $p_1\equiv \ldots \equiv p_s\equiv t\mmod{n}$ such that $F(p_1,\ldots,p_s)=0$. If we take $n$ to be a prime power, then, since the $p_i$ are not all equal, at least one $p_j$ is coprime to $n$, whence $t$ and $n$ are coprime. Applying the Chinese remainder theorem, we conclude that $F(x,\ldots,x)$ is intersective of the second kind.
		
		Finally, suppose (\ref{eqn2.1}) is density regular or density regular over the primes. Let $m\in\N$. By the Siegel--Walfisz theorem (in the case of density regularity over the primes), for each prime $p\nmid m$, we can find an integer solution $(x_1,\ldots,x_s)$ to (\ref{eqn2.1}) with $x_1\equiv \dots \equiv x_s\equiv m\mmod{p}$. By reducing (\ref{eqn2.1}) modulo $p$, we deduce that $F(m,\ldots,m)$ is divisible by infinitely many primes, whence $F(m,\ldots,m)=0$. As $m$ was arbitrary, we conclude that $F(x,\ldots,x)$ is the zero polynomial.
	\end{proof}
	
	We now apply this lemma to (\ref{eqn1.3}) to establish the `only if' parts of Theorem \ref{thm1.1}. By working modulo $|\mu| n$ for any $n \in \bN$, we see that if $\mu \ne 0$ and $\mu h$ is intersective of the second kind then so too is $h$. Note also that the following result does not impose any restriction on the number of variables.
	
	\begin{cor}\label{cor2.4}
		Let $s\in\N$ and let $h$ be an integer polynomial of positive degree. Let $a_1,\ldots,a_s$ be non-zero integers, and let $b$ be an integer.
		\begin{enumerate}
			\item[(PR)] If (\ref{eqn1.3}) is partition regular over the primes, then there exists a non-empty set $I\subseteq\{1,\ldots,s\}$ with $\sum_{i\in I}a_i =0$ and an integer $m$ with $b=(a_{1}+\cdots +a_s)m$ such that $h(x) - m$ is an intersective polynomial of the second kind.
			\item[(DR)] If (\ref{eqn1.3}) is density regular or density regular over the primes, then $b=a_1 +\cdots +a_s = 0$.
		\end{enumerate}
	\end{cor}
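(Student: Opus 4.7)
The strategy is to apply Lemma~\ref{lem2.3} to the polynomial
\[
F(x_1,\ldots,x_s) := a_1 h(x_1) + \cdots + a_s h(x_s) - b,
\]
whose diagonal is $F(x,\ldots,x) = Ah(x) - b$ with $A := a_1 + \cdots + a_s$. In the density case, Lemma~\ref{lem2.3}(DR) forces $Ah(x) - b \equiv 0$ in $\bZ[x]$; since $\deg h \ge 1$, the coefficient of $x^d$ (namely $A$ times the leading coefficient of $h$) must vanish, forcing $A = 0$, and the constant term then gives $b = 0$. For the partition case, Lemma~\ref{lem2.3}(PR) tells us that $Ah(x) - b$ is intersective of the second kind, and the task is to extract $I$, $m$, and the required intersectivity for $h(x) - m$ from this single piece of data.

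When $A = 0$, intersectivity of the constant polynomial $-b$ forces $n \mid b$ for every $n \in \bN$, hence $b = 0$; one then takes $I = \{1,\ldots,s\}$ and $m = h(1)$, since $h(x) - h(1)$ vanishes at $x = 1$ (which is coprime to every $n$) and is thus trivially intersective of the second kind. When $A \ne 0$, applying intersectivity of $Ah(x) - b$ at $n = |A|$ produces an integer $x$ with $|A| \mid Ah(x) - b$, and since $|A| \mid Ah(x)$ automatically, this yields $|A| \mid b$; write $b = Am$. Factoring $Ah(x) - b = A(h(x) - m)$ and applying intersectivity of the second kind at $|A|n$ for arbitrary $n \in \bN$ furnishes an integer $x$ coprime to $|A|n$ (and hence to $n$) with $|A|n \mid A(h(x) - m)$, so that $n \mid h(x) - m$; this proves $h(x) - m$ is intersective of the second kind.

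The main obstacle is the remaining task when $A \ne 0$: producing a non-empty $I \subseteq \{1,\ldots,s\}$ with $\sum_{i \in I} a_i = 0$. The plan is to argue by contradiction via a pullback colouring. Setting $g(x) := h(x) - m$, equation (\ref{eqn1.3}) is equivalent to $a_1 g(x_1) + \cdots + a_s g(x_s) = 0$, which is also partition regular over $\cP$. Suppose no such $I$ exists; by Rado's classical theorem \cite[Satz IV]{Rad1933}, fix a finite colouring $\chi:\bN\to[r]$ admitting no non-constant monochromatic solution to $\sum_{i=1}^s a_i y_i = 0$. Replacing $g$ by $-g$ if necessary (which leaves the equation $\sum a_i g(x_i) = 0$ invariant), we may assume the leading coefficient of $g$ is positive, so there exists $p_0 \in \bN$ past which $g$ is strictly increasing and takes values in $\bN$ on the primes. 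Now colour $\cP$ by assigning each prime $p < p_0$ its own private colour and colouring every prime $p \ge p_0$ by $\chi(g(p))$. A monochromatic non-constant solution cannot contain any prime $p_i < p_0$ (since the isolated colours would then force all $p_j = p_i$), so all $p_i \ge p_0$; by strict monotonicity, the values $g(p_i)$ are not all equal and are $\chi$-monochromatic, yielding a non-constant $\chi$-monochromatic solution to $\sum_{i=1}^s a_i y_i = 0$ in $\bN$ and contradicting the choice of $\chi$. This delivers the required $I$ and completes the plan.
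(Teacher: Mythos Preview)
Your proof is correct and follows essentially the same approach as the paper's: both apply Lemma~\ref{lem2.3} to the diagonal polynomial $A h(x)-b$ (with $A=a_1+\cdots+a_s$), handle the cases $A=0$ and $A\ne 0$ separately, extract the integer $m$ with $b=Am$ from intersectivity, and then pass from intersectivity of $A(h(x)-m)$ to intersectivity of $h(x)-m$ (the paper records this last step as the remark immediately preceding Corollary~\ref{cor2.4}).

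The one substantive difference is in obtaining the set $I$ with $\sum_{i\in I}a_i=0$. The paper simply invokes \cite[Proposition~2.1]{CC}, whereas you supply a direct argument: assuming no such $I$ exists, you appeal to the ``only if'' direction of Rado's criterion to find a bad colouring $\chi$ of $\bN$ for the linear equation $\sum a_i y_i=0$, and then pull this back to a colouring of $\cP$ via $p\mapsto \chi(g(p))$ (after isolating finitely many small primes so that $g$ is positive and injective on the remainder). This is exactly the content of the cited proposition, so your version is self-contained rather than modular; both routes are equally valid, and yours has the advantage of not requiring the reader to consult \cite{CC}.
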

	
	\begin{proof}
		Throughout this proof, we write $\mu =a_1+\cdots+a_s$ and $H(x) =\mu h(x) - b$.
		First suppose (\ref{eqn1.3}) is partition regular over the primes. Applying Lemma \ref{lem2.3}, we find that $H(x)$ is intersective of the second kind. By considering solutions to $H(x)\equiv 0\mmod{d}$ for any $d\mid \mu$, we observe that $\mu\mid b$. If $\mu\neq 0$, then there is a unique $m\in\Z$ with $b=\mu m$ such that $H(x)=\mu(h(x) - m)$, whence $h(x)-m$ is intersective of the second kind. If $\mu=0$, then $b=0$ and so, upon taking $m=h(1)$, we have $b=\mu m$ and $h(x)-m$ is trivially intersective of the second kind. In both cases, Equation (\ref{eqn1.3}) becomes
		\begin{equation*}
			\sum_{i=1}^{s} a_i(h(x_i) - m) = 0,
		\end{equation*}
		for some $m\in\Z$ such that $h(x)-m$ is intersective of the second kind. As this new equation is partition regular, we infer from \cite[Proposition 2.1]{CC} the existence of a set $I\subseteq\{1,\ldots,s\}$ with the desired properties.
		
		Finally, suppose that (\ref{eqn1.3}) is density regular or density regular over the primes. Then Lemma \ref{lem2.3} implies that $H(x) = \mu h(x) - b$ is the zero polynomial. Since $h$ has positive degree, we conclude that $b=\mu=0$.
	\end{proof}
	
	In view of these necessary conditions, Theorem \ref{thm1.1} is now an immediate consequence of Theorem \ref{thm1.4}.
	
	
	\subsection{Linear form equations}
	
	Having dispensed with the necessary conditions for partition and density regularity, we focus on finding monochromatic or dense solutions to (\ref{eqn1.3}). The necessary conditions we have established therefore inform us that (\ref{eqn1.3}) takes the shape
	\begin{equation*}
		\sum_{i\in I}a_i (h(x_i)-m) = -\sum_{j\in[s]\setminus I}a_j (h(x_j)-m),
	\end{equation*}
	where $I\subseteq [s]$ is non-empty with $\sum_{i\in I}a_i=0$, and $m\in\bZ$ is such that $b=(a_1+\cdots+a_s)m$ and $h(x)-m$ is intersective of the second kind. Upon replacing $h(x)$ with $h(x)-m$, we can therefore reduce to the case where $b=0$ and $h(x)$ is intersective of the second kind.
	
	To find monochromatic or dense solutions to (\ref{eqn1.3}) with $b=0$, we study equations of the form
	\begin{equation}\label{eqn2.2}
		L_1(h(\bx)) = L_2(h(\by)),
	\end{equation}
	for some linear forms $L_1$ and $L_2$. To avoid trivialities, we only consider non-degenerate  linear forms, where $L(\bx)=a_1x_1 + \cdots + a_sx_s$ is \emph{non-degenerate} if $a_i\neq 0$ for all $i\in[s]$. For this new equation, the necessary conditions for partition and density regularity become $L_1(1,\ldots,1)=0$. Following the recent works \cite{CC,CLP2021}, we address both density and partition regularity for (\ref{eqn2.2}) simultaneously by seeking solutions where the $x_i$ variables are sourced in a dense subset of $\cP_X$ whilst the remaining $y_j$ variables come from a colour class $\cC_k\subseteq \cP_X$.
	
	Before proceeding to our results, we require some notation. We begin by providing an explicit description of the threshold $s_0(d)$ for the number of variables required in our main theorems.
	Let  $T = T(d) \in \bN$ be minimal such that if $h(x) \in \bZ[x]$ has degree $d$, then
	\[
	h(x_1) + \cdots + h(x_T) = h(x_{T+1}) + \cdots + h(x_{2T})
	\]
	has $O_{h,\eps}(X^{2T - d + \eps})$ solutions $\bx \in [X]^{2T}$. Equivalently, by orthogonality, $T = T(d)$ is the smallest positive integer such that the moment estimate
	\begin{equation*}
		\int_{\T}\left\lvert \sum_{x\leqslant X}e(\alpha h(x))\right\rvert^{2T}\ll_{h,\eps} X^{2T-d+\eps}
	\end{equation*}
	holds for any integer polynomial $h$ of degree $d$. The quantity $s_0(d)$ appearing in Theorem~\ref{thm1.1} is now defined to be $s_0(d):= 2T(d) + 1$. 
	
	It follows from Hua's lemma \cite[Equation (1)]{Hua1938} that $T(2) \le 2$ and $T(3) \le 4$. In general, the proof of \cite[Corollary 14.7]{Woo2019} delivers
	\[
	T(d) \le \frac{d(d-1)}2 + \lfloor \sqrt{2d+2} \rfloor.
	\]
	These observations verify the bound (\ref{eqn1.4}) for $s_0(d)$.
	Finally, by considering solutions with $x_{i}=x_{i+T}$ for $i=1,2,\ldots,T$, we record the lower bounds
	\begin{equation}\label{eqn2.3}
		T(d) \geqslant d,
		\qquad
		s_0(d) \ge 2d + 1.
	\end{equation}
	
	We can now state our main result on partition and density regularity over primes for linear form equations (\ref{eqn2.2}).
	
	\begin{thm}
		\label{thm2.5}
		Let $r$ and $d\geqslant 2$ be positive integers, and let $0<\delta<1$. 
		Let $h$ be an integer polynomial of degree $d$ which is intersective of the second kind.
		Let $s \ge 1$ and $t \ge 0$ be integers such that $s + t \ge s_0(d)$. Let
		\[
		L_1(\bx) \in \bZ[x_1,\ldots,x_s], \qquad L_2(\by) \in \bZ[y_1,\ldots,y_t] 
		\]
		be non-degenerate linear forms such that $L_1(1,\ldots,1) = 0$. Then there exist 
		\[
		X_0=X_0(\delta,h,r,L_1,L_2)\in\N,
		\qquad
		\tau_0(\delta)=\tau_0(h,r,L_1,L_2;\delta)\in (0,1)
		\]
		such that the following is true for all $X\geqslant X_0$. Suppose $\cP_X = \cC_1 \cup \cdots \cup \cC_r$.
		Then there exists $k \in [r]$ with $|\cC_k|\geqslant\tau_0(\delta) |\cP_X|$
		such that if $A \subseteq \cP_X$ satisfies
		$|A| \ge \del |\cP_X|$, then 
		\[
		\# \{ (\bx, \by) \in A^s \times \cC_k^t: L_1(h(\bx)) = L_2(h(\by)) \} \gg 
		\frac{X^{s+t-d}}{(\log X)^{s+t}}.
		\]
		The implied constant may depend on $h, L_1, L_2, r, \delta$. 
	\end{thm}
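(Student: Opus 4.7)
I would follow the two-stage strategy announced in the Methods subsection. The first stage reduces Theorem \ref{thm2.5} to a linearised counting statement, in which $h$ on the $A$-side has been replaced by an unrestricted integer variable $\bn \in [N]^s$ with $N \asymp h(X)$, while $h$ on the $\cC_k$-side is replaced by Lucier's auxiliary polynomial $h_D$, which is itself intersective of the second kind. The bridge is a Fourier-analytic transference principle in the style of Green \cite{Gre2005A}: after the $W$-trick with $W = \prod_{p \le w} p$ and $w = w(X) \to \infty$ slowly, the $h$-image of primes in appropriate residue classes modulo $W$ becomes Fourier-indistinguishable from a dense subset of $[N]$.

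To make this precise I would construct a majorant $\nu$ on $[N]$ capturing this $W$-tricked prime density and establish two Fourier-analytic estimates for it: pointwise decay of $\widehat{\nu - 1_{[N]}}$ off a thin set of major arcs, and a restriction inequality at the exponent $s_0(d) = 2T(d)+1$. The Fourier decay is obtained by a standard major--minor arc analysis of prime Weyl sums $\sum_{p \le X} e(\alp h(p))$: on major arcs the sum factors as a singular series times a smooth integral, matching the corresponding object for $1_{[N]}$; on minor arcs I would invoke Vinogradov/Vaughan-type estimates, with the complete sums arising from the $W$-trick handled by Lemma \ref{lem2.2} and the Siegel--Walfisz input from Theorem \ref{thm2.1}. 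The restriction bound comes from the standard $TT^*$ argument, with $s_0(d)$ precisely calibrated to the mean-value estimate defining $T(d)$. Feeding these two inputs into a transference argument yields Theorem \ref{thm2.5} assuming the linearised statement.

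For the linearised statement itself, I would invoke an arithmetic regularity lemma to decompose the normalised indicator of each colour class $\cC_k$ as $f = f_{\str} + f_{\sml} + f_{\unf}$, where $f_{\str}$ is a bounded-complexity Bohr-structured function, $\lVert f_{\sml}\rVert_{L^2}$ is small, and $f_{\unf}$ is sufficiently Fourier-uniform that its contribution to the solution count is negligible by the restriction estimate above; the $f_{\sml}$ contribution is absorbed by Cauchy--Schwarz. Choosing the colour class that maximises the structured mass yields the density bound $|\cC_k| \ge \tau_0(\delta)|\cP_X|$, and the main term then reduces to counting primes $\bz \in \cP_X^t$ for which $h_D(\bz)$ falls into an explicit polynomial $\bohr$ set in $\bT^k$.

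The principal obstacle, and the genuinely novel contribution of the paper, is a uniform lower bound for such prime polynomial Bohr sets: roughly,
\[
\#\bigl\{ p \in \cP_X : \lVert h_D(p) \btheta \rVert \le \eta \bigr\} \gg \frac{X}{\log X},
\]
with an explicit dependence on $\eta$ and the dimension $k$. I would prove this by induction on $k$, using Harman's theorem \cite{Har1993} as the base case and, at each inductive step, shedding one coordinate via a prime Weyl sum estimate of the type already assembled for the transference stage, supplemented by the analyses of Lucier \cite{Luc2006} and L\^e--Spencer \cite{LS2014} specific to polynomials intersective of the second kind. Substituting this uniform density back into the arithmetic regularity main term closes the proof.
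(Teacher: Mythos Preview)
Your overall architecture matches the paper---transference to a linearised statement (Theorem \ref{thm2.7}), then arithmetic regularity plus a uniform lower bound for prime polynomial Bohr sets---but there is a genuine mix-up in the regularity step that would cause the argument to fail as written.

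In the paper, the arithmetic regularity lemma is applied to the \emph{dense sets} on the $\bn$-side of the linearised equation $L_1(\bn)=L_2(h_D(\bz))$, not to the colour classes. The hypothesis $L_1(1,\ldots,1)=0$ makes the $\bn$-side translation-invariant, so once $1_{\cA}$ is replaced by its structured part $f_{\str}(n)=F(n\btheta)$, the auxiliary count $\Psi_{\bz}(f_{\str})$ is large whenever each $\|h_D(z_j)\btheta\|$ is small (Lemma \ref{lem9.1}). The colour class $k$ is then chosen by pigeonhole as the one with the most primes in this Bohr set, and the density lower bound on $|\cC_k|$ is read off \emph{a posteriori} from the solution count (see \S\ref{sec7}), not from any ``structured mass''. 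To handle the universal quantifier over $A$, the paper first reduces (Theorem \ref{thm2.7} $\Leftarrow$ Theorem \ref{thm8.1}) to a statement with $r$ fixed dense sets $\cA_1,\ldots,\cA_r$, and applies regularity simultaneously to all of them with a common phase $\btheta$. Your proposal to decompose the colour-class indicators instead puts the Bohr frequencies on the wrong side: you would then need every dense $\cA\subseteq[N]$ to have large count against a Bohr structure fixed in advance by the colouring, and there is no translation-invariance on the $\bz$-side to exploit. Two smaller corrections: in the paper $w$ is a fixed large constant and $W$ is a high power of the primorial (this matters for the ``double $W$-trick'' with the auxiliary modulus $\kap$); and Harman's result (Lemma \ref{lem9.3}) is invoked at \emph{every} step of the induction on the Bohr dimension in Theorem \ref{thm9.2}, to convert failure of the density bound into a large prime Weyl sum, not only at the base case.
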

	
	\begin{remark}
		In the case $t=0$, we have a linear form $L_2$ in zero variables and we are counting solutions $\bx\in A^s$ to the equation
		\begin{equation*}
			L_1(h(\bx)) = 0.
		\end{equation*}
		Note that when $t=0$, all linear forms $L_2$ in $t$ variables are vacuously non-degenerate.
	\end{remark}
	
	By harnessing a combinatorial `cleaving' argument of Prendiville \cite{Pre2021}, we can swiftly deduce Theorem \ref{thm1.4} from Theorem \ref{thm2.5}.
	
	\begin{proof}[Proof of Theorem \ref{thm1.4} given Theorem \ref{thm2.5}]
		Following the argument given at the beginning of this subsection, we may reduce to the case where $b=0$ and $h$ is intersective of the second kind. Combining \cite[Lemma 3.2]{CC} with \eqref{eqn2.3}, for $N$ sufficiently large, the number of solutions $\bx \in [N]^s$ to (\ref{eqn1.3}) such that $x_i = x_j$ holds for some $i \neq j$ is $O_\eps(N^{s-d+\eps-1/2})$. Therefore, by setting $t=0$, we see that the density regularity statement in Theorem~\ref{thm1.4} follows directly from Theorem~\ref{thm2.5}. Similarly, given a colouring $\cP_N = \cC_1\cup\cdots\cup\cC_r$, provided $N$ is sufficiently large, it remains to show that there are at least $c_1N^{-d}(N/\log N)^{s}$ monochromatic solutions to (\ref{eqn2.2}) with
		\begin{equation*}
			L_1(\bx):=\sum_{i\in I}a_ix_i, \quad\text{and} \quad L_2(\by):= -\sum_{i\in [s]
				\setminus I}a_iy_j.
		\end{equation*}
		
		For each $\delta>0$, let $\tau_0(\delta)\in(0,1)$ be as given in the statement of Theorem \ref{thm2.5}. By making minor adjustments, we may assume that $\tau_0(\delta)\leqslant\delta$. Set $\delta_0=1/r$, and for each $i\in[r]$ let $\delta_i:=\tau_0(\delta_{i-1})$, whence $0<\delta_r\leqslant\ldots\leqslant\delta_0<1$. Take $N\geqslant X_0(\delta_r,h,r,L_1,L_2)$ as in Theorem~\ref{thm2.5}, and suppose $\cP_N
		=\cC_1\cup\cdots\cup\cC_r$. For each $0\leqslant i\leqslant r$, let $k_i\in[r]$ be the index given by applying Theorem~\ref{thm2.5} with $\delta=\delta_i$. By the pigeonhole principle, we can find $k\in[r]$ and $0\leqslant i<j\leqslant r$ such that $k_i=k_j=k$. Therefore 
		\[
		\frac{|\cC_k|}{|\cP_N|}
		\ge \tau_0(\delta_i)
		=\delta_{i+1}\geqslant\delta_j,
		\]
		and
		\[
		\# \{ (\bx, \by) \in A^{|I|} \times \cC_k^{s-|I|}: L_1(h(\bx)) = L_2(h(\by)) \} \gg_{\delta,h,r,L_1,L_2} 
		\frac{N^{s-d}}{(\log N)^{s}}
		\]
		holds for any $A\subseteq\cP_N$ with $|A|\geqslant\delta_j$. Taking $A=\cC_k$ finishes the proof.
	\end{proof}
	
	\subsection{Auxiliary intersective polynomials}\label{subsec2.4}
	
	The next step of our argument is to use a version of Green's Fourier-analytic transference principle \cite{Gre2005A} to obtain solutions to (\ref{eqn2.2}) by `transferring' solutions from a `linearised' equation. To make this precise, we first need to introduce the auxiliary intersective polynomials of Lucier \cite{Luc2006} which emerge during the execution of this process.
	
	Let $h$ be an integer polynomial of positive degree which is intersective of the second kind. Thus, for each prime $p$, we can find a $p$-adic unit $z_p \in \bZ_p^{\times}$ such that $h(z_p)=0$. Throughout this paper, we fix a choice of $z_p$ for each prime $p$ and let $m_p$ be the multiplicity of $z_p$ as a zero of $h$. We can then define the completely multiplicative function
	\begin{equation}\label{eqn2.4}
		\lam(D) := \prod_p p^{m_p \ord_p(D)} \qquad (D\in\N).
	\end{equation}
	By \cite[Equation (73)]{Luc2006}, we have
	\begin{equation}\label{eqn2.5}
		D \mid \lam(D) \mid D^d.
	\end{equation}
	Denote by $r_D$ the unique integer in the range $(-D,0]$ which satisfies
	\[
	r_D \equiv z_p \mmod{p^{\ord_p(D)} \bZ_p}
	\]
	for all primes $p$. As $h$ is intersective of the second kind, we note that
	$(r_D, D) = 1$.  
	
	Finally, with this notation in place, we define the auxiliary intersective polynomial
	\[
	h_D(x) := \frac{h(r_D + Dx)}{\lam(D)} \in \bZ[x].
	\]
	These polynomials and the surrounding notation were introduced by Lucier \cite{Luc2006}, who also showed that $h_D$ is indeed a polynomial with integer coefficients \cite[Lemma 21]{Luc2006}.
	The most important property of these auxiliary polynomials is that the greatest common divisor of the non-constant coefficients of $h_D$ is bounded uniformly in $D$. Specifically, for all $D\in\N$, \cite[Lemma 28]{Luc2006} states that
	\[
	\gcd(h_D - h_D(0)) \ll_h 1.
	\]
	As in \cite[\S6]{CC}, this bound is crucial to our investigation of exponential sums involving intersective polynomials (see \S\ref{sec7} and \S\ref{sec9}).
	
	We can now state our linearised version of Theorem \ref{thm2.5}.
	
	\begin{thm} \label{thm2.7}
		Let $r$ and $d\geqslant 2$ be positive integers, and let $0<\delta<1$. 
		Let $h$ be an integer polynomial of degree $d$ which is intersective of the second kind.
		Let $s \ge 1$ and $t \ge 0$ be integers such that $s + t \ge s_0(d)$. Let
		\[
		L_1(\bx) \in \bZ[x_1,\ldots,x_s], \qquad L_2(\by) \in \bZ[y_1,\ldots,y_t] 
		\]
		be non-degenerate linear forms such that $L_1(1,\ldots,1) = 0$.
		Then there exists 
		\[
		\eta = \eta(d,\delta,L_1,L_2) \in (0,1)
		\]
		such that the following is true.
		Let $D, Z \in \bN$ satisfy $Z \ge Z_0(D, h, r, \del, L_1, L_2)$, and set $N:=h_D(Z)$. 
		Suppose
		\[
		[\eta Z,Z]\cap\{ z \in [Z]: r_D + Dz \in \cP \}
		= \cC_1 \cup \cdots \cup \cC_r.
		\]
		Then there exists $k \in [r]$ such that if $\cA\subseteq[N]$ satisfies $|\cA|\geqslant\delta N$, then
		\[
		\# \{ (\bn,\bz) \in \cA^s \times \cC_k^t: L_1(\bn) = L_2(h_D(\bz)) \} \gg 
		N^{s-1} 
		\left(\frac{DZ}{\varphi(D)\log Z} \right)^t.
		\]
		The implied constant may depend on $h, L_1, L_2, r, \delta$.
	\end{thm}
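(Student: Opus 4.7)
The plan is to follow the transference strategy sketched in the introduction and execute it via a Bohr-set arithmetic regularity lemma combined with uniform density estimates for prime polynomial Bohr sets. First, I would normalise: replace $\mathbf{1}_\cA$ by a function $f\colon [N]\to[0,1]$ with $\bE_{[N]} f\ge\delta$, and replace $\mathbf{1}_{\cC_k}$ by a suitable prime-indicator weight on $[\eta Z,Z]\cap(r_D+D\bZ)$ whose Fourier decay and $L^q$-restriction properties are to be supplied by \S\ref{sec5} and \S\ref{sec6}. After the $W$-trick of \S\ref{sec3}, the counting problem becomes an almost-linear one with $n_i\in[N]$ and $z_j$ prime in the fixed progression, and the target bound $N^{s-1}(DZ/(\varphi(D)\log Z))^t$ matches the `random' prediction.

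Next, I would apply an arithmetic regularity lemma to decompose $f = f_\str + f_\sml + f_\unf$, where $f_\str$ is approximately constant on cosets of a bounded-dimensional Bohr set $B = B(\Gamma,\rho)$, $f_\sml$ is $L^2$-small, and $f_\unf$ has negligible Fourier mass off $\Gamma$. Substituting into the counting functional and expanding multilinearly, the $f_\unf$ contributions are annihilated by a generalised von Neumann inequality that reduces them, in Fourier space, to products involving the prime polynomial Weyl sum
\[
\sum_{z} \mathbf{1}_\cP(r_D + Dz)\, e(\alpha h_D(z));
\]
the hypothesis $s+t\ge s_0(d)=2T(d)+1$, paired with the moment and restriction estimates of \S\ref{sec6}, renders these terms negligible. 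The $f_\sml$ contributions are treated analogously by Cauchy--Schwarz against the same restriction bound.

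The main term, coming from $f_\str$, reduces to a weighted count of $\bz\in\cC_k^t$ with $L_2(h_D(\bz))$ lying in specified $B$-cosets; this in turn is driven by quantities of the form
\[
\#\{z \in [\eta Z, Z]: r_D + Dz \in \cP,\ \|\balpha\cdot h_D(z)\|<\rho\},
\]
uniformly over the Bohr frequencies $\balpha$ produced by regularity. A pigeonhole over the $r$ colour classes then selects a $k$ that captures a positive share of this prime-Bohr mass, independently of $\cA$; combining with $\bE f\ge\delta$ yields the desired lower bound $\gg N^{s-1}(DZ/(\varphi(D)\log Z))^t$.

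The principal obstacle, and the novel ingredient of the paper, is to establish the uniform prime polynomial Bohr density estimate
\[
\#\{z \in [\eta Z, Z]: r_D + Dz \in \cP,\ \|\balpha\cdot h_D(z)\|<\rho\} \gg \rho^{\dim B}\,\frac{DZ}{\varphi(D)\log Z},
\]
uniformly in $\balpha$ and in $D$ (since $D$ will be allowed to vary with $N$ in the deduction of Theorem~\ref{thm1.4}). I would prove this by induction on $\dim B$: the base case, with a single frequency, follows from Harman's theorem on Diophantine approximation by primes, while the inductive step invokes the prime polynomial Weyl sum estimates assembled in \S\ref{sec4}--\S\ref{sec5}, and exploits the uniform bound on the content of the non-constant coefficients of $h_D$ recalled from Lucier's work in \S\ref{subsec2.4}. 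This density estimate is the step I expect to require the most care, particularly in ensuring uniformity in $D$ and in handling the interaction between the Bohr set dimension and the admissible size of $\eta$.
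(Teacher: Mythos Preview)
Your proposal is essentially correct and follows the paper's approach: reduce to the $\gcd(L_1)=1$ case (Proposition~\ref{prop2.9}), apply the arithmetic regularity lemma (Lemma~\ref{lem8.2}) to $1_\cA$, discard $f_\unf$ via the Fourier control Lemma~\ref{lem7.1} using the restriction estimate for $\nu_D$ (Proposition~\ref{prop6.7}), bound $\Psi_{\bz}(f_\str+f_\sml)$ from below for $\bz$ in a polynomial Bohr set (Lemma~\ref{lem9.1}), pigeonhole over colour classes, and feed in the prime polynomial Bohr density Theorem~\ref{thm9.2}, whose proof is by induction on the dimension starting from Harman's lemma. Two minor corrections: the $W$-trick of \S\ref{sec3} is used to deduce Theorem~\ref{thm2.5} \emph{from} Theorem~\ref{thm2.7}, not within the present proof (the $n_i$ are already free in $[N]$); and the achievable dependence of $\Delta$ on $\rho$ is considerably worse than $\rho^{\dim B}$---the induction loses a square in $\rho$ at each step, yielding a tower-type bound rather than a polynomial one---though this does not affect the argument since $\rho$ and $K$ are ultimately bounded in terms of the fixed parameters.
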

	
	\begin{remark}
		The quantity $\eta$ is introduced for technical reasons concerning certain weight functions $\nu_D$ we employ when applying the transference principle. For further details, see the remarks preceding Lemma \ref{lem8.3}.
	\end{remark}
	
	The proof of Theorem \ref{thm2.7} is deferred to the final two sections of this paper. As in \cite{CC}, we prove this `linearised' result by applying an arithmetic regularity lemma. To streamline this forthcoming argument, we require the following proposition, which is a minor variation of \cite[Proposition 3.10]{CC} and is proved in the same way.
	
	\begin{prop}\label{prop2.9}
		Suppose that Theorem \ref{thm2.7} is true in the cases where $\gcd(L_1)=1$. Then,
		subject to altering the quantities $Z_0(D,r,\delta,L_1,L_2,P)$, $\eta$, and the implicit constant in the final bound, Theorem \ref{thm2.7} holds in general.
	\end{prop}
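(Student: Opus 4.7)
Let $g := \gcd(L_1)$; we may assume $g \geq 2$, since the hypothesis covers the case $g = 1$. Write $L_1 = g L_1^*$, so that $\gcd(L_1^*) = 1$ and $L_1^*(1,\ldots,1) = 0$. The strategy will be to reduce to the primitive case by a substitution in the $\bz$-variables that exploits Lucier's auxiliary polynomials to absorb the factor of $g$, combined with a pigeonhole in the $\bn$-variables to transfer the density hypothesis.

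First, I will set $D' := gD$ and $m := (r_{D'} - r_D)/D$. The compatibility of the congruences $r_D, r_{D'} \equiv z_p \pmod{p^{\ord_p(D)} \bZ_p}$ for $p \mid D$ recorded in \S\ref{subsec2.4} gives $r_{D'} \equiv r_D \pmod{D}$, so $m \in \bZ$; and a direct computation using $\lambda(D') = \lambda(D)\lambda(g)$ yields the identity
\[
h_D(m + g z') \;=\; \lambda(g)\, h_{D'}(z') \qquad (z' \in \bZ).
\]
Substituting $\bz = m\mathbf{1} + g\bz'$ into $L_1(\bn) = L_2(h_D(\bz))$ therefore produces $g L_1^*(\bn) = \lambda(g)\, L_2(h_{D'}(\bz'))$. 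Because the multiplicities $m_p$ are all at least $1$, \eqref{eqn2.5} yields $g \mid \lambda(g)$, and dividing through by $g$ gives the primitive-form equation
\[
L_1^*(\bn) \;=\; \tilde L_2(h_{D'}(\bz')), \qquad \tilde L_2 := (\lambda(g)/g)\, L_2,
\]
in which $\tilde L_2$ is a non-degenerate linear form with integer coefficients. The bijection $z \leftrightarrow z' = (z - m)/g$ will transport the given $r$-colouring of $[\eta Z, Z] \cap \{z : r_D + Dz \in \cP\}$ to an $r$-colouring of $[\eta' Z', Z'] \cap \{z' : r_{D'} + D' z' \in \cP\}$, where $Z' := (Z - m)/g$ and $\eta' \asymp \eta$ for $Z$ sufficiently large.

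Next, since $N = h_D(m + gZ') = \lambda(g) h_{D'}(Z') = \lambda(g) N'$, I will partition $[N]$ into $\lambda(g)$ consecutive intervals of length $N'$ and pigeonhole over $\cA$ to extract an interval $I$ with $|\cA \cap I| \geq \delta N'$; translating $I$ down to $[N']$ and invoking the translation invariance $L_1^*(\bn + c\mathbf{1}) = L_1^*(\bn)$ (from $L_1^*(\mathbf{1}) = 0$) produces a subset $\cA' \subseteq [N']$ of density at least $\delta$ such that each solution $(\bn', \bz') \in (\cA')^s \times (\cC_k')^t$ of the primitive-form equation lifts, via both substitutions, to a solution in $\cA^s \times \cC_k^t$ of the original. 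Applying the assumed primitive case of Theorem~\ref{thm2.7} to $(L_1^*, \tilde L_2, D', Z', \delta, r)$ then supplies the count $\gg (N')^{s-1} (D'Z'/\varphi(D')\log Z')^t$, which is equivalent to the required $\gg N^{s-1}(DZ/\varphi(D)\log Z)^t$ once the $g$- and $\lambda(g)$-dependent constants are absorbed into the implicit constant (permissible since it is allowed to depend on $L_1$). The main technical points---the Lucier identity displayed above and the divisibility $g \mid \lambda(g)$---both follow directly from \S\ref{subsec2.4}, and the remainder of the argument will be formal bookkeeping that proceeds exactly as in the proof of \cite[Proposition 3.10]{CC}.
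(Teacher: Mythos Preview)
Your proposal is correct and follows essentially the same route as the paper, which defers to \cite[Proposition~3.10]{CC}: the substitution $z = m + gz'$ with $D' = gD$, the Lucier identity $h_D(m+gz') = \lambda(g)h_{D'}(z')$, and the pigeonhole-plus-translation argument on $[N]$ using $L_1^*(\mathbf 1)=0$ are precisely the ingredients of that reduction. The minor integrality issues (e.g.\ $(Z-m)/g$ need not be an integer, so one takes $Z' = \lfloor (Z-m)/g \rfloor$ and absorbs the $O(N^{1-1/d})$ discrepancy by passing from $\delta$ to $\delta/2$) are indeed routine bookkeeping, as you indicate.
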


	\subsection{Sketch of the transference argument}
	
	In this subsection, we outline how the transference principle allows us to deduce Theorem \ref{thm2.5} from Theorem \ref{thm2.7}. Fix an integer polynomial $h$ that is intersective of the second kind, as well as a pair of linear forms $L_1$ and $L_2$ as in the statement of Theorem \ref{thm2.5}. We begin by recalling that Theorem \ref{thm2.5} concerns the equation
	\begin{equation}
		\label{eqn2.6}
		L_1(h(\bx)) = L_2(h(\by)),
	\end{equation}
	whilst Theorem \ref{thm2.7} considers, for some parameter $D\in\N$, the `linearised' equation
	\begin{equation}
		\label{eqn2.7}
		L_1(\bn) = L_2(h_D(\bz)).
	\end{equation}
	
	Suppose we have a finite colouring $\cP_X= \cC_1\cup\cdots\cup \cC_r$ and a set $A\subseteq\cP_X$ with $|A|\geqslant\delta|\cP_X|$. For the convenience of this sketch, assume that $X\equiv r_D \mmod{D}$. Choosing $Z\in\N$ such that $X=r_D + DZ$, we can define an $r$-colouring
	\[
	\{ z \in [Z]: r_D + Dz \in \cP \}
	= \Tilde{\cC}_1 \cup \cdots \cup \Tilde{\cC}_r
	\]
	by
	\begin{equation*}
		\Tilde{\cC_i}:=\{ z\in [Z] : r_D + Dz\in\cC_i\}.
	\end{equation*}
	Let $N:=h_D(Z)$. By pigeonholing, we find a `dense' set $\cA\subseteq[N]$ such that
	\begin{equation*}
		\cA\subseteq \left\{ \frac{h(x)-h(b)}{\lambda(D)}: x \in A \right \},
	\end{equation*}
	for some integer $b$. 
	
	Theorem \ref{thm2.7} now informs us that there are many solutions $(\bn,\bz)\in \cA^s\times\Tilde{\cC}_k^t$ to (\ref{eqn2.7}) for some $k\in[r]$. Given such a solution, our construction of $\cA$ and $\Tilde{\cC}_k$ furnishes a solution $(\bx,\by)\in A^s\times\cC_k^t$ to (\ref{eqn2.6}) satisfying
	\begin{equation*}
		n_i = \frac{h(x_i) - h(b)}{\lambda(D)}, \quad  y_j = r_D + Dz_j \qquad (1\leqslant i\leqslant s,
		\quad 
		1\leqslant j\leqslant t).
	\end{equation*}
	
	Since the map $(\bn,\bz)\mapsto(\bx,\by)$ is injective, this argument allows us to obtain many solutions to (\ref{eqn2.6}). However, observe that the number of solutions to (\ref{eqn2.6}) given in Theorem \ref{thm2.5} is of a different order to the number of solutions to (\ref{eqn2.7}) provided by Theorem \ref{thm2.7}. This is handled by instead considering weighted counts of solutions to (\ref{eqn2.6}). Our task is then to construct an appropriate weight function $\nu$ which is supported on the set
	\begin{equation*}
		[N]\cap\left\{ \frac{h(x)-h(b)}{\lambda(D)}: b < x \leqslant X\right \}.
	\end{equation*}
	The key utility of the transference principle is that, provided our weight function is suitably `pseudorandom', we can find a `dense model' $g:[N]\to[0,1]$ such that $\hat{\nu}\approx\hat{g}$. Applying Theorem \ref{thm2.7} to a set of the form $\cA=\{x\in[N]: g(x)>c\}$, our argument above allows us to prove Theorem \ref{thm2.5}.
	
	To ensure our weight $\nu$ is sufficiently pseudorandom, we have to contend with the fact that the set $h(\cP)$ is not equidistributed in residue classes. This issue prevents one from simply taking $\nu$ to be a scaled version of the indicator function of $h(\cP)$. Fortunately, there is a standard technical manoeuvre, known as the \emph{$W$-trick}, developed by Green \cite{Gre2005A} to account for equidistribution modulo small primes. In the setting discussed above, this amounts to demanding that our weight $\nu$ is supported on a set of the form
	\begin{equation*}
		[N]\cap\left\{ \frac{h(x)-h(b)}{\lambda(D)}: b < x \leqslant X, \quad x\equiv b \mmod{W\kappa} \right\},
	\end{equation*}
	for some $W,\kappa\in\N$ such that $W$ is divisible by all primes $p\leqslant w$ for some sufficiently large $w\in\N$. If we choose $D,W,\kappa$ appropriately, then we can ensure that the set
	\begin{equation*}
		\left\{ \frac{h(x)-h(b)}{\lambda(D)}: b < x \leqslant X, \quad x\equiv b \mmod{W\kappa} \right\}
	\end{equation*}
	equidistributes over congruence classes modulo $p$ for any prime $p\leqslant w$. The contribution of the remaining primes is then subsumed by the error term emerging from the transference of solutions from the `dense model' $g$ to $\nu$.
	The appearance of the additional parameter $\kappa\in\N$ here, resulting in a `double $W$-trick', was the main innovation of our previous work \cite{CC}. Its purpose is to ensure that $\lam(D)$ precisely accounts for all common divisors of the values of $h(x) - h(b)$, as $x$ ranges over the arithmetic progression $b$ modulo $W \kap$.
	
	\section{Linearisation and the \texorpdfstring{$W$}{W}-trick}
	\label{sec3}
	
	In this section, we execute the `double $W$-trick' and construct the weight function $\nu$ needed for our application of the transference principle. Throughout this section, we fix the parameters
	\begin{equation*}
		\delta,h,r,L_1,L_2
	\end{equation*}
	which appear in Theorem \ref{thm2.5}.
	
	\subsection{The \texorpdfstring{$W$}{W}-trick}
	
	Consider a set $A \subseteq \cP_X$ with $|A| \geqslant \del |\cP_X|$. Let $C \in \bN$ be large with respect to the fixed parameters, and let $w\in\N$ be large in terms of $C$. Define
	\[
	M = Cd^2 10^{4w},
	\qquad
	W = \left(
	\prod_{p \le w} p \right)
	^{100dw}, \qquad
	V = \sqrt W
	\]
	and
	\[
	D = W^2,
	\qquad
	Z = \frac{X - r_D}{D},
	\qquad
	N = h_D(Z) = \frac{h(X)}{\lam(D)}.
	\]
	Henceforth, we take $X\in\N$ sufficiently large in terms of $C,w,$ and the fixed parameters. We also assume that $D\mid(X-r_D)$, whence $Z \in \bN$.
	
	For $R \in \bN$, and $b \in [R]$, we write 
	\[
	A_{b,R} := \{ x \in A: x \equiv b \mmod R \}.
	\]
	We denote by $(H,W)_d$ the greatest $m \in \bN$ for which $m^d \mid (H,W)$. By \cite[Lemma A.5]{CC} and the Siegel--Walfisz theorem, we have
	\[
	\delta|\cP_X| \le |A| \le
	\sum_{\substack{
			b \in [W]: \\
			(b,W) = 1 \\
			(h'(b),W)_d \le M}} |A_{b,W}| + 
	O\left(10^w W M^{-1/2} 
	\frac{X}{\varphi(W) \log X}
	\right).
	\]
	Since $w$ is large relative to $C$ and $d$, we have $M<2^{50w}$. Hence, if $(h'(b),W)_d \le M$, then there cannot exist a prime $p\leqslant w$ which divides $(h'(b),W)$ with multiplicity greater than $50dw$. It follows that if $(h'(b),W)_d \le M$ then  $(h'(b),W)\mid V$. By incorporating the crude estimate
	\begin{equation*}
		\frac{W}{\varphi(W)} = \prod_{p\leqslant w}\left( 1- \frac{1}{p}\right)^{-1} \leqslant 2^w,
	\end{equation*}
	we find that
	\[
	\frac{\del X}{\log X}
	\ll \sum_{\substack{
			b \in [W]: \\
			(b,W) = 1 \\
			(h'(b),W) \mid V}} |A_{b,W}|.
	\]
	Thus, there exists $b_0 \in [W]$ such that
	\[
	|A_{b_0,W}| \gg 
	\frac{\del X}{\varphi(W) \log X},
	\qquad
	(b_0, W) = 1,
	\qquad
	(h'(b_0),W) \mid V.
	\]
	
	Define $\kap \in \bN$ by
	\[
	W \kap (h'(b_0),W) = \lam(D).
	\]
	Note that (\ref{eqn2.4}) implies that $\kappa$ is $w$-smooth, whence $\varphi(W)\kappa = \varphi(W\kappa)$. By pigeonholing, we can then find $b \in [W \kap]$ such that 
	\[
	b \equiv b_0 \mmod W,
	\qquad
	|A_{b,W\kap}| \gg
	\frac{\del X}{\varphi(W) \kap \log X}
	= \frac{\del X}{\varphi(W\kap) \log X}.
	\]
	Since $(h'(b),W)
	= (h'(b_0), W) \mid V$, we also have
	\[
	(h'(b),W\kap) = (h'(b),W).
	\]
	
	\subsection{The weight function}\label{subsec3.2}
	
	Our next task is to construct an appropriately `pseudorandom' weight function. Let $w,W$, and $\kappa$ be as defined in the previous subsection. Fix some $b\in[W\kappa]$ which satisfies
	\begin{equation}\label{eqn3.1}
		(h'(b),W)\mid V=\sqrt{W}.
	\end{equation}
	We then define
	\[
	\nu: \bZ \to [0,\infty),
	\qquad
	\nu(n) := 
	\frac{\varphi(W)}{W(h'(b),W)}
	\sum_{\substack{b < p \le X \\ p \equiv b \mmod{W \kap} \\ h(p) - h(b) = n \lam(D)}} h'(p) \log p.
	\]
	Observe that $\nu$ is supported on the set
	\begin{equation*}
		\left \{n\in\N: n=\frac{h(p) - h(b)}{\lam(D)},\; p\in\cP_X,\; p\equiv b \mmod{W\kappa} \right \}\subseteq [N].
	\end{equation*}
	
	Recall from the previous subsection that we are considering a fixed set $A\subseteq\cP_X$ with $|A|\geqslant\delta|\cP_X|$, and that we made a judicious choice of $b\in[W\kappa]$ so that $|A_{b,W\kappa}|$ is suitably dense. For this specific choice of $b$, let
	\begin{equation}
		\label{eqn3.2}
		\cA = \left \{
		\frac{h(p) - h(b)}{\lam(D)}: p \in A_{b,W\kap} \right \}.
	\end{equation}
	
	\begin{lemma} \label{lem3.1}
		Let $\cA,\nu$ be as defined above. If $X$ is sufficiently large in terms of $h,\delta,w$, then
		\[
		\sum_{n \in \cA} \nu(n) \gg_\del N.
		\]
	\end{lemma}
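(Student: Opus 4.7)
The plan is to observe that $\sum_{n\in\cA}\nu(n)$ is, up to a harmless multiplicative constant, a weighted count over the primes $p\in A_{b,W\kap}$, and then to show that enough of these primes are large so that $h'(p)\log p$ carries a useful weight. Since every $p\in A_{b,W\kap}$ maps to some $n\in\cA$ and contributes $h'(p)\log p$ to $\nu(n)$, we have
\[
\sum_{n\in\cA}\nu(n)\ \geq\ \frac{\varphi(W)}{W(h'(b),W)}\sum_{p\in A_{b,W\kap}}h'(p)\log p.
\]
Combining $\lam(D)=W\kap(h'(b),W)$ with the identity $\varphi(W)\kap=\varphi(W\kap)$ (valid since $\kap$ is $w$-smooth and every prime dividing $\kap$ also divides $W$), the target lower bound $\gg_\del N=h(X)/\lam(D)\asymp_h X^d/\lam(D)$ reduces to
\[
\sum_{p\in A_{b,W\kap}}h'(p)\log p\ \gg_\del\ \frac{X^d}{\varphi(W\kap)}.
\]

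The principal obstacle is that the pigeonhole choice of $b$ only guarantees $|A_{b,W\kap}|\gg\del X/(\varphi(W\kap)\log X)$, and these primes could a priori be concentrated at small values of $p$ where $h'(p)$ is tiny. To rule this out, I would invoke the Siegel--Walfisz theorem (Theorem~\ref{thm2.1}); it is applicable because $w$ is fixed, so $W\kap\leq(\log X)^{A}$ for any fixed $A$ once $X$ is sufficiently large. For any $\alp\in(0,1)$, it yields
\[
\#\{p\leq\alp X:p\equiv b\mmod{W\kap}\}\ =\ \frac{\Li(\alp X)}{\varphi(W\kap)}+O\Bigl(\frac{X}{\varphi(W\kap)(\log X)^{2}}\Bigr).
\]
Choosing $\alp=\alp(\del)>0$ small enough that the right-hand side is at most half of $|A_{b,W\kap}|$ leaves $\gg_\del X/(\varphi(W\kap)\log X)$ primes of $A_{b,W\kap}$ in the range $[\alp X,X]$.

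On this range, $h'(p)\asymp_{h,\del}X^{d-1}$ and $\log p\asymp\log X$, so
\[
\sum_{p\in A_{b,W\kap}}h'(p)\log p\ \geq\ \sum_{\substack{p\in A_{b,W\kap}\\ p\geq\alp X}}h'(p)\log p\ \gg_{h,\del}\ \frac{X^{d}}{\varphi(W\kap)},
\]
which is what is needed. The only non-trivial input is the Siegel--Walfisz truncation, and it is clean here because the modulus $W\kap$ is polylogarithmic in $X$; the remainder of the argument is bookkeeping of implicit constants and of the dependence of $\alp$ on $\del$.
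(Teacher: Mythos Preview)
Your proof is correct and follows essentially the same strategy as the paper: both reduce to showing $\sum_{p\in A_{b,W\kap}} h'(p)\log p \gg_\del X^d/\varphi(W\kap)$, and both invoke Siegel--Walfisz to guarantee that a positive proportion of the primes in $A_{b,W\kap}$ are of size $\asymp_\del X$. The only cosmetic difference is that the paper phrases this as a comparison inequality --- observing that $|A_{b,W\kap}|$ exceeds the number of primes $p\le c\del X$ in the progression, so the sum of the (eventually increasing) weight $h'(p)\log p$ over $A_{b,W\kap}$ dominates the corresponding sum over $\{p\le c\del X: p\equiv b\}$ --- whereas you argue directly that at most half of $A_{b,W\kap}$ can lie below $\alp X$.
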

	
	\begin{proof} Let $c$ be a small, positive constant. For $X$ sufficiently large, the Siegel--Walfisz theorem implies that
		\[
		|A_{b,W\kap}| \ge 
		\# \{ p \le c\del X:
		\: 
		p \equiv b \mmod{W \kap} \},
		\]
		whence
		\[
		\sum_{p \in A_{b,W\kap}} h'(p) \log p \gg \sum_{\substack{p \le c\del X \\ p \equiv b \mmod{W \kap}}} p^{d-1} \log p.
		\]
		By the Siegel--Walfisz theorem again, we thus have
		\[
		\sum_{p \in A_{b,W\kap}} h'(p) \log p \gg_\del
		\frac{X^d}{\varphi(W \kap)}.
		\]
		Therefore
		\[
		\frac{W(h'(b),W)}{\varphi(W)} 
		\sum_{n \in \cA} \nu(n)
		= O((W\kap)^{d-1} \log W) +
		\sum_{p \in A_{b,W\kap}} h'(p) \log p \gg_\delta \frac{X^d}{\varphi(W \kap)}.
		\]
		Thus, for our choice of $\kappa$ and $b$, the desired bound now follows from the equalities
		\begin{equation*}
			\frac{W(h'(b),W)}{\varphi(W)} = \frac{\lambda(D)}{\kappa \varphi(W)} = \frac{\lambda(D)}{\varphi(W\kappa)}.
		\end{equation*}
	\end{proof}
	
	\section{Exponential sums}\label{sec4}
	
	In this section, we record some results on exponential sums of the form
	\begin{equation}\label{eqn4.1}
		\sum_{\substack{p \le t \\ p \equiv b \mmod m}}e(F(p))G(p),
	\end{equation}
	where $F$ is a real polynomial, and $G:(1,\infty)\to \R$ is a continuously differentiable function. We apply these results in \S\ref{sec5} to study the Fourier transform $\hat{\nu}$ of our weight function $\nu$. The results of this section are also used in \S\ref{sec9} to establish density bounds for `prime polynomial Bohr sets'.
	
	A standard observation in analytic number theory, going back over a century to Hardy and Littlewood, is that such exponential sums can only be large if their phases exhibit `major arc' behaviour. In the case of (\ref{eqn4.1}), this means that the leading coefficient of the polynomial $F$ must be very close to a rational number with small denominator. To elucidate this further, we record the following lemma from \cite{Hua1965}, which considers the situation where the leading coefficient of $F$ is rational. In what follows, and throughout this section, for all $k \in \bN$, let $\sig_k$ be large in terms of $k$ and put $C_k = 2^{8k} \sig_k$.
	
	\begin{lemma} \label{lem4.1} 
		Let $m \in \bN$ and $b \in \bZ$ be coprime. Let $F(y) \in \bR[y]$ have degree $k$, and suppose $a/q$ is its leading coefficient, where $a,q\in\bZ$ are coprime and
		\[
		(\log P)^{C_k} < q \le \frac{P^k}{(\log P)^{C_k}}.
		\]
		Assume that $P$ is sufficiently large in terms of $m$.
		Then
		\[
		\sum_{\substack{p \le P \\ p \equiv b \mmod m}} e(F(p)) \ll_k \frac{P}{(\log P)^{\sig_k+1}}.
		\]
	\end{lemma}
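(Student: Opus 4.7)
The plan is to follow the classical Vinogradov--Vaughan route: first remove the arithmetic progression condition, then decompose the sum over primes by a combinatorial identity, and finally invoke Weyl's inequality on the resulting bilinear sums, exploiting the minor-arc information on the leading coefficient.

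First I would reduce to the case $m=1$. Using the additive character expansion
\[
\mathbf{1}_{n \equiv b \, (\Mod{m})} = \frac{1}{m}\sum_{h=0}^{m-1} e\!\left(\frac{h(n-b)}{m}\right),
\]
one rewrites the sum as
\[
\sum_{\substack{p \le P \\ p \equiv b \, (\Mod{m})}} e(F(p))
= \frac{1}{m}\sum_{h=0}^{m-1} e(-hb/m) \sum_{p \le P} e(F_h(p)),
\]
where $F_h(y) = F(y) + hy/m$ is a polynomial of the same degree $k$ with the \emph{same} leading coefficient $a/q$. Since $m$ is fixed and $P$ is large, it suffices to prove, uniformly in such polynomials $G$ of degree $k\ge 2$ with leading coefficient in the stated range, that $T(G) := \sum_{p \le P} e(G(p)) \ll_k P (\log P)^{-\sigma_k - 1}$.

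Second, I would apply Vaughan's identity to the von Mangoldt function and dyadic partial summation to decompose $T(G)$ into $O((\log P)^{O(1)})$ Type I sums of the shape $\sum_{\ell \le L} a_\ell \sum_{n \le P/\ell} e(G(\ell n))$ (with $L \le P^{2/3}$) and Type II sums $\sum_{L_1 < \ell \le L_2} a_\ell \sum_{M_1 < n \le M_2} b_n e(G(\ell n))$ in the customary ranges, where $|a_\ell|, |b_n| \ll \log P$. For Type I sums, the inner exponential sum has leading coefficient $a\ell^k/q$ in the variable $n$; writing this fraction in lowest terms, the reduced denominator $q_\ell := q/\gcd(q, a\ell^k) = q/\gcd(q,\ell^k)$ remains in the minor-arc window for essentially all $\ell$, so Weyl's inequality produces a saving. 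For Type II sums, Cauchy--Schwarz followed by $k-1$ applications of Weyl differencing linearises the phase $G(\ell n) - G(\ell n')$ in $\ell$, and standard divisor estimates for the resulting sums of Gauss sums yield a bound proportional to $(q^{-1} + P^{-1} + qP^{-k})^{1/2^{k-1}}$, up to $(\log P)^{O(1)}$ factors.

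Combining these bounds and using the hypothesis $(\log P)^{C_k} < q \le P^k (\log P)^{-C_k}$, the Weyl exponent $2^{1-k}$ converts the saving $q^{-1}$ (and $qP^{-k}$) into a factor $(\log P)^{-C_k/2^{k-1}}$. Since $C_k = 2^{8k}\sigma_k$, this overwhelms the $(\log P)^{O(k)}$ losses inherited from Vaughan's identity and the Weyl differencings, giving the desired $P(\log P)^{-\sigma_k - 1}$ bound. The main technical obstacle is the bookkeeping in the Type I case: one must control the average loss due to $\gcd(q, \ell^k)$ when the reduced denominator $q_\ell$ drops out of the minor-arc range for exceptional $\ell$; this is handled by a standard divisor bound $\sum_{\ell \le L} \gcd(q,\ell^k)^{1/2^{k-1}} \ll L (\log P)^{O_k(1)}$, which ensures the contribution of such $\ell$ is negligible once $C_k$ is taken large enough. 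This is precisely the chain of estimates carried out in Hua's monograph \cite{Hua1965}, from which we quote the result.
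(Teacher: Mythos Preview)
The paper's proof is a single citation of \cite[Theorem 10]{Hua1965}, and you end at the same place. Your added sketch is broadly correct for $k\ge 2$, but note that it is not ``precisely the chain of estimates carried out in Hua's monograph'': Vaughan's identity dates from 1977, a dozen years after Hua's book, which instead uses Vinogradov's earlier combinatorial decomposition of the von Mangoldt function to reach the Type I/II sums. Note too that Hua's Theorem~10 already incorporates the progression condition $p\equiv b\mmod m$ directly, so your character reduction to $m=1$ is unnecessary (and, as written, only preserves the leading coefficient when $k\ge 2$; the case $k=1$ is left unaddressed).
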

	
	\begin{proof}
		This follows immediately from \cite[Theorem 10]{Hua1965}.
	\end{proof}
	
	Using this lemma, we can show that (\ref{eqn4.1}) is small when the leading coefficient of $F$ is `minor arc', meaning that it is not well-approximated by a rational number with denominator at most polylogarithmic in $P$.
	
	\begin{lemma}
		\label{lem4.2}
		Let $m\in\bN$ and $b\in\bZ$ be coprime. Let $F(y)\in\bR[y]$ have degree $k$, and let $\theta$ be its leading coefficient. Let $G:(1,\infty)\to \R$ be a continuously differentiable function. Assume that $P$ is sufficiently large in terms of $m$, and that
		\[
		\max \{ q,
		P^k \| q \tet \| \}
		> (\log P)^{2 C_k}
		\qquad (q \in \bN).
		\]
		Then
		\[
		\sum_{\substack{p \le P \\ p \equiv b \mmod m}} e(F(p)) G(p)\log p \ll_k \frac{P}{(\log P)^{\sigma_k}}\cdot\lVert G\rVert_{\cS[2,P]}.
		\]
	\end{lemma}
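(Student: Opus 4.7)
The plan is to reduce Lemma \ref{lem4.2} to Lemma \ref{lem4.1} via a Dirichlet approximation plus partial summation argument. The heuristic is that if $\tet$ is not close to any rational with small denominator, then after replacing $\tet$ by the best rational approximation $a/q$ given by Dirichlet's theorem, the error $\bet = \tet - a/q$ is small enough that the factor $e(\bet y^k)$ can be absorbed into the smooth weight and handled by Abel summation, leaving a sum to which Lemma \ref{lem4.1} directly applies.

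First I would apply Dirichlet's theorem with denominator $Q = P^k / (\log P)^{C_k}$ to produce coprime integers $a \in \bZ$, $q \in \bN$ with $1 \le q \le Q$ and $\|q\tet\| \le 1/Q$. The minor arc hypothesis $\max\{q, P^k\|q\tet\|\} > (\log P)^{2C_k}$, combined with $P^k\|q\tet\| \le P^k/Q = (\log P)^{C_k} < (\log P)^{2C_k}$, then forces
\[
(\log P)^{2C_k} < q \le \frac{P^k}{(\log P)^{C_k}},
\]
which is precisely the range of validity of Lemma \ref{lem4.1}. Decompose $F(y) = F^*(y) + \bet y^k$, where $F^*(y)$ has leading coefficient $a/q$ (so that $F$ and $F^*$ agree in their lower-degree coefficients), and $\bet = \tet - a/q$ satisfies $|\bet| \le 1/(qQ) = (\log P)^{C_k}/(qP^k)$.

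Second, I would split off the smooth factor via partial summation. Define $\phi(t) := e(\bet t^k) G(t) \log t$ and
\[
\tilde S(t) := \sum_{\substack{p \le t \\ p \equiv b\ (m)}} e(F^*(p)),
\]
so that the sum in the lemma equals $\phi(P)\tilde S(P) - \int_2^P \phi'(t)\tilde S(t)\, dt$. For $t \in [P^{1/2}, P]$, the constraints on $q$ still satisfy $(\log t)^{C_k} < q \le t^k/(\log t)^{C_k}$ (enlarging $\sig_k$ if necessary), so Lemma \ref{lem4.1} yields $\tilde S(t) \ll t/(\log t)^{\sig_k+1} \ll P/(\log P)^{\sig_k+1}$; for $t < P^{1/2}$ the trivial bound $\tilde S(t) \ll t$ is sufficient. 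The derivative $\phi'(t)$ is bounded by the sum of $|G'(t)|\log t$, $|G(t)|/t$, and $2\pi k|\bet|t^{k-1}|G(t)|\log t$, with the last term controlled by $|\bet| P^{k-1} \ll (\log P)^{C_k}/(qP) \ll 1/((\log P)^{C_k} P)$. Using $\lVert G'\rVert_{L^\infty[2,P]} \le \lVert G\rVert_{\cS[2,P]}/(P-2)$, these combine to give $\int_2^P |\phi'(t)|\, dt \ll \lVert G\rVert_{\cS[2,P]}\log P$, and tracking each contribution through the partial summation identity produces the target bound $P(\log P)^{-\sig_k}\lVert G\rVert_{\cS[2,P]}$.

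The main obstacle is the bookkeeping around the third term in $\phi'$, namely the factor $\bet t^{k-1}$ coming from differentiating $e(\bet t^k)$: unlike the other contributions, it does not come with the $\log t$ saving from Lemma \ref{lem4.1} alone, and one must use the lower bound $q > (\log P)^{2C_k}$ on the Dirichlet denominator to cancel the $(\log P)^{C_k}$ coming from the Dirichlet-approximation bound $|\bet| \le 1/(qQ)$. Provided $\sig_k$ is chosen sufficiently large compared to $C_k$ (which is harmless since both depend only on $k$), the pooled power-of-log saving comfortably exceeds $\sig_k$, and the congruence condition $p \equiv b \ (m)$ merely shrinks the trivial sub-$P^{1/2}$ contribution by a constant depending on $m$; since $P$ is taken large in terms of $m$, this is absorbed into the implicit constant.
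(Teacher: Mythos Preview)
Your approach matches the paper's: approximate $\theta$ by Dirichlet, write $F = F^* + \beta y^k$ with $F^*$ having rational leading coefficient $a/q$, and use partial summation together with Lemma~\ref{lem4.1}. There is, however, one technical slip.

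With your choice $Q = P^k/(\log P)^{C_k}$, Dirichlet only gives $q \le P^k/(\log P)^{C_k}$, which is \emph{exactly} the upper boundary of Lemma~\ref{lem4.1}'s range when applied with parameter $P$. For $t < P$, the corresponding upper boundary $t^k/(\log t)^{C_k}$ is strictly smaller (the map $t \mapsto t^k/(\log t)^{C_k}$ is increasing for large $t$), so if $q$ happens to be near its maximum the hypothesis $q \le t^k/(\log t)^{C_k}$ of Lemma~\ref{lem4.1} fails. In particular, your claim that this inequality holds for all $t \in [P^{1/2}, P]$ is incorrect: at $t = P^{1/2}$ the right-hand side is only $\asymp P^{k/2}/(\log P)^{C_k}$. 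Enlarging $\sigma_k$ does not help, since $C_k = 2^{8k}\sigma_k$ grows with it and the shortfall is a power of $P$, not of $\log P$.

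The paper instead takes $Q = P^k/(\log P)^{2C_k}$. This yields $q \le P^k/(\log P)^{2C_k}$ and, directly, $|\beta| \le P^{-k}$ (so one need not invoke the lower bound on $q$ to control the $\beta t^{k-1}$ term). One then uses the trivial bound $|\tilde S(t)| \le t$ for $t \le P/(\log P)^{\sigma_k+1}$; for larger $t$, the extra factor of $(\log P)^{C_k}$ of slack, together with $C_k = 2^{8k}\sigma_k \ge k(\sigma_k+1)$, guarantees $q \le t^k/(\log t)^{C_k}$, so Lemma~\ref{lem4.1} applies. With this single adjustment your argument goes through.
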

	
	\begin{proof}
		By Dirichlet's approximation theorem, there exist coprime $q \in \bN$ and $a \in \bZ$ such that 
		\[
		q \le \frac{P^k}{(\log P)^{2 C_k}},
		\qquad
		|q \tet - a| \le 
		\frac{(\log P)^{2 C_k}}{P^k}.
		\]
		By our assumption, we also have 
		\[
		q > (\log P)^{2 C_k}.
		\]
		Thus $\bet := \tet - a/q$ satisfies
		\[
		|\bet| \le P^{-k}.
		\]
		
		Let $f(y) = F(y) - \bet y^k$.  By partial summation \cite[Lemma 2.6]{Vau1997}, we have
		\begin{equation*}
			\sum_{\substack{p \le P \\ p \equiv b\mmod m}} e(F(p)) G(p)\log p = A(P)\psi(P) - \int_{2}^{P}A(t)\psi'(t) \d t,
		\end{equation*}
		where
		\[
		\psi(t) := e(\beta t^k)G(t)\log t, \qquad
		A(t) := \sum_{\substack{p \le t \\ p \equiv b \mmod m}}
		e(f(p)).
		\]
		We deduce from Lemma \ref{lem4.1} and the trivial bound $|A(t)| \le t$ that
		\begin{equation}
			\label{eqn4.2}
			A(t) \ll \frac{P}{(\log P)^{\sigma_k +1}} \qquad \left( 2 \le t \le P \right).
		\end{equation}
		This implies that
		\begin{equation*}
			A(P)\psi(P)(\log P)^{\sigma_k} \ll_k PG(P). 
		\end{equation*}
		It therefore remains to estimate 
		\begin{equation*}
			\int_{2}^{P}A(t)\psi'(t) \d t = I_1 + I_2 + I_3 + I_4,
		\end{equation*}
		where
		\begin{align*}
			I_1 = \int_{2}^{P}A(t)G'(t)e(\beta t^k)\log t \d t, \qquad
			& I_2 = \beta k\int_{2}^{P}t^{k-1}A(t)G(t)e(\beta t^k)\log t \d t,\\
			I_3 = \int_{2}^{P/(\log P)^{\sigma_k}}(A(t)/t)G(t)e(\beta t^k) \d t, \qquad
			&I_4 = \int_{P/(\log P)^{\sigma_k}}^{P}(A(t)/t)G(t)e(\beta t^k) \d t.
		\end{align*}
		The bound (\ref{eqn4.2}) gives
		\begin{equation*}
			I_1(\log P)^{\sigma_k} \ll P^2 \max_{2\leqslant t\leqslant P}|G'(t)|. 
		\end{equation*}
		Similarly, since $|\beta|\leqslant P^{-k}$, we see that
		\begin{equation*}
			I_2 (\log P)^{\sigma_k} \ll P \max_{2\leqslant t\leqslant P}|G(t)|.
		\end{equation*}
		Using the trivial bound $|A(t)|\leqslant t$, we have
		\begin{equation*}
			I_3(\log P)^{\sigma_k} \ll P\max_{2\leqslant t\leqslant P/(\log P)^{\sigma_k}}|G(t)|.
		\end{equation*}
		Similarly, we deduce from (\ref{eqn4.2}) that
		\begin{equation*}
			I_4(\log P)^{\sigma_k} \ll \int_{P/(\log P)^{\sigma_k}}^P |G(t)| \d t \leqslant P\max_{2\leqslant t\leqslant P}|G(t)|.
		\end{equation*}
		Combining these estimates completes the proof.
	\end{proof}
	
	The above two lemmas suffice to handle `minor arc' behaviour. As is typical in applications of the circle method, we treat the major arcs by establishing asymptotic formulae for the exponential sums (\ref{eqn4.1}).
	
	\begin{lemma}
		[General major arc asymptotic]
		\label{lem4.3}
		Let $f(y)\in\bZ[y]$ have degree $k$, and let $G:(1,\infty)\to \R$ be a continuously differentiable function.
		Let $b \in \bZ$ and $m \in \bN$ be coprime, and let $Q \in \bN$ with
		\begin{equation} 
			\label{eqn4.3}
			\frac{f(b+mx) - f(b)}{Q} \in \bZ[x].
		\end{equation}
		Let $\tet \in \bR$ and $P \ge 2$, and suppose 
		$(q,a) \in \bN \times \bZ$ with
		$(a,q) = 1$ and 
		\[
		q \ll (\log P)^{2 C_k}, 
		\qquad
		|q \theta - a| \ll \frac{Q (\log P)^{2C_k}}{P^k}.
		\]
		Let $c > 0$ be constant, small in terms of $C_k$, and put $\bet = \theta - a/q$. If $P$ is sufficiently large relative to $m$ and $Q$, then
		\[
		\sum_{\substack{p \le P \\ p \equiv b \mmod m}} e_Q (
		\tet f(p)) G(p)\log p = I_{f,G}(\bet) \frac{S(q,a;m)}{\varphi(mq)} + 
		O_f(Pe^{-c \sqrt{\log P}}
		\lVert G\rVert_{\cS[2,P]}),
		\]
		where
		\[
		I_{f,G}(\bet) = \int_2^P e_Q
		(\bet f(t)) G(t) \d t,
		\qquad
		S(q,a; m) = \sum_{\substack{t \mmod{mq} \\ (t,q) = 1 \\ t \equiv b \mmod m}} e_{Qq} (af(t)).
		\]
	\end{lemma}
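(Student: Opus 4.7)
The plan is to execute a standard major-arc dissection, partitioning the prime sum according to residue classes modulo $mq$. Writing $\tet = a/q + \bet$, I would first factorise $e_Q(\tet f(p)) = e_{Qq}(af(p))\, e_Q(\bet f(p))$. For each residue $t\in[mq]$ with $t\equiv b\mmod{m}$ I would partition the sum; primes lying in classes with $(t,q)>1$ contribute at most $O(\tau(q)\log P)$ in total and are absorbed into the final error term.

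The crux of the argument is to verify that for $p\equiv t\mmod{mq}$ with $t\equiv b\mmod{m}$ and $(t,q)=1$, the oscillating factor $e_{Qq}(af(p))$ collapses to the constant $e_{Qq}(af(t))$ and can be pulled outside the inner sum. To see this, I would write $t=b+mc$ and $p=t+mqn$ and invoke condition (\ref{eqn4.3}) to produce $\tilde g\in\bZ[x]$ with $f(b+mx) = f(b) + Q\tilde g(x)$. A direct calculation then gives $f(p)-f(t) = Q[\tilde g(c+qn)-\tilde g(c)]$, and since $\tilde g$ has integer coefficients the bracketed difference is divisible by $qn$; hence $f(p)\equiv f(t)\mmod{Qq}$. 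This reduces the task to evaluating, for each admissible $t$, the weighted inner sum of $e_Q(\bet f(p))G(p)\log p$ over primes $p\le P$ with $p\equiv t\mmod{mq}$.

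These inner sums I would handle via Abel summation applied with the smooth weight $w(u) := e_Q(\bet f(u))G(u)$, whose derivative satisfies
\[
\lvert w'(u)\rvert \ll \frac{\lvert\bet\rvert}{Q}\lvert f'(u)\rvert\,\lVert G\rVert_{L^{\infty}[2,P]} + \lVert G'\rVert_{L^{\infty}[2,P]} \ll_f \frac{(\log P)^{2C_k}}{P}\,\lVert G\rVert_{\cS[2,P]},
\]
using $\lvert\bet\rvert\ll Q(\log P)^{2C_k}/P^k$ and $\lvert f'(u)\rvert\ll_f P^{k-1}$. Theorem~\ref{thm2.1} (applicable since $mq$ is polylogarithmic in $P$ and $P$ is sufficiently large relative to $m$) replaces the weighted prime-counting function in the class $t\mmod{mq}$ by $u/\varphi(mq)$ up to an error of $O(ue^{-c_0\sqrt{\log u}})$. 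Integration by parts against this approximation delivers the main term $I_{f,G}(\bet)/\varphi(mq)$ together with an error of $O_f(P\,\lVert G\rVert_{\cS[2,P]}(\log P)^{2C_k}e^{-c_0\sqrt{\log P}})$ per residue class.

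Summing over the $\varphi(mq)$ admissible residues yields the claimed main term $S(q,a;m)I_{f,G}(\bet)/\varphi(mq)$. The total accumulated error is at most $O_f(mqP\,\lVert G\rVert_{\cS[2,P]}(\log P)^{2C_k}e^{-c_0\sqrt{\log P}})$; since $(\log P)^{A}e^{-c_0\sqrt{\log P}}\ll e^{-c\sqrt{\log P}}$ for any fixed $A$ and any $c<c_0$ once $P$ is large, and since $m$ is fixed while $q$ is polylogarithmic, this fits into the stated $O_f(Pe^{-c\sqrt{\log P}}\lVert G\rVert_{\cS[2,P]})$ bound by choosing $c$ sufficiently small in terms of $C_k$. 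The main delicacy is precisely this bookkeeping, together with the need to extract divisibility by the full modulus $Qq$---rather than by $Q$ and $q$ separately---from condition (\ref{eqn4.3}), which is what allows the Gauss-sum-like factor $S(q,a;m)$ to emerge cleanly.
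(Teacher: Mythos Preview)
Your proposal is correct and follows essentially the same route as the paper: split $e_Q(\theta f(p))$ into $e_{Qq}(af(p))e_Q(\beta f(p))$, use condition~\eqref{eqn4.3} to show the first factor is constant on residue classes modulo $mq$, apply Siegel--Walfisz in each class, and Abel-sum against the smooth weight. The only cosmetic difference is that the paper keeps $\log t$ inside the Abel weight $\psi(t)=e_Q(\beta f(t))G(t)\log t$ and uses the $\pi$-form of Siegel--Walfisz (Theorem~\ref{thm2.1} gives $\Li(n)/\varphi(mq)$), whereas you keep $\log p$ with the prime sum and use the $\vartheta$-form to get $u/\varphi(mq)$; both work. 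One small slip: the number of admissible residues $t\pmod{mq}$ with $t\equiv b\pmod m$ and $(t,q)=1$ is at most $q$, not $\varphi(mq)$, but since you bound the accumulated error by $mq\cdot P(\log P)^{2C_k}e^{-c_0\sqrt{\log P}}\lVert G\rVert_{\cS[2,P]}$ anyway this is harmless.
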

	
	\begin{remark} The condition \eqref{eqn4.3} holds if $f(b+mx)/Q \in \bZ[x]$.
	\end{remark}
	
	\begin{proof}
		Writing $g(x)\in\bZ[x]$ for the integer polynomial appearing in \eqref{eqn4.3}, if $u \in \bZ$ then
		\[
		\frac{f(b+m(u + qv)) - f(b)}{Qq} - \frac{g(u)}{q} = \frac{g(u+qv) - g(u)}{q} \in\bZ[v].
		\]
		This implies that
		\[
		e_{Qq}(f(b+m(u + qv))) = e_{Qq}(f(b + mu)) \qquad (v\in\bZ).
		\]
		Hence, for $n \le P$,
		\[
		S_n := \sum_{\substack{p \le n \\ p \equiv b \mmod m}} e_{Qq}(
		af(p)) 
		= O(mq) +
		\sum_{\substack{t \mmod{mq} \\ (t,q) = 1 \\ t \equiv b \mmod m}} e_{Qq} \left(a f(t)\right)
		\sum_{\substack{p \le n \\ 
				p \equiv t \mmod{mq}}} 1.
		\]
		By the Siegel--Walfisz theorem (Theorem \ref{thm2.1}), the inner sum is
		\[
		\sum_{\substack{p \le n \\ 
				p \equiv t \mmod{mq}}} 1
		= \frac{\Li(n)}{\varphi(mq)}
		+ O(Pe^{-3c \sqrt{\log P}}),
		\]
		whence
		\[
		S_n = \frac{\Li(n)}{\varphi(mq)} S(q,a;m) + O(P e^{-2c \sqrt{\log P}}).
		\]
		Writing $\psi(t) = e_Q(\bet f(t)) G(t)\log t$, summation by parts gives
		\begin{align*}
			\sum_{\substack{p \le P \\ p \equiv b \mmod m}} e_Q\left(
			F(p) \right) G(p) \log p &= \sum_{n \le P} (S_n - S_{n-1}) \psi(n)
			\\ &= S_P \psi(P+1) + \sum_{n \le P} S_n (\psi(n) - \psi(n+1)).
		\end{align*}
		
		By hypothesis, for $P$ sufficiently large,
		\begin{equation*}
			|\beta| =
			|\theta - a/q| \ll \frac{Q(\log P)^{2C_k}}{qP^k} \ll \frac{(\log P)^{2C_k+1}}{P^k}.
		\end{equation*}
		Hence, for all $x,y\in[2,P]$ with $x<y$, the mean value theorem yields
		\begin{align}
			\notag
			\left\lvert\frac{\psi(y) - \psi(x)}{y-x}\right\rvert
			&\leqslant \sup_{t\in[x,y]}\left\lbrace |G(t)/t| + |G'(t)\log t|+ |\beta f'(t)G(t)\log t| \right\rbrace\\
			\label{eqn4.4}
			&\ll_f \lVert G\rVert_{L^{\infty}[2,P]}\left(\frac{1}{x} + \frac{(\log P)^{2(C_k + 1)}}{P}\right) + \lVert G'\rVert_{L^{\infty}[2,P]}\log P.
		\end{align}
		In particular, this shows that
		\begin{align*}
			\sum_{n \le P} |\psi(n) - \psi(n+1)| &\ll \lVert G\rVert_{L^{\infty}[2,P]}\left((\log P)^{2(C_k + 1)}+ \sum_{n\leqslant P} \frac{1}{n}\right) + \lVert G'\rVert_{L^{\infty}[2,P]}P\log P
			\\ &\ll (\log P)^{2(C_k+1)}\lVert G\rVert_{\cS[2,P]}.
		\end{align*}
		As $\Li(t) = \sum_{n=3}^t \int_{n-1}^n \frac{\d x}{\log x}$, summation by parts now gives
		\begin{align*}
			&\sum_{\substack{p \le P \\ p \equiv b \mmod m}} e_Q\left(
			F(p) \right) G(p) \log p + O(P e^{-c \sqrt{\log P}}\lVert G\rVert_{\cS[2,P]}) \\
			&= \frac{S(q,a;m)}{\varphi(mq)}
			\left( \Li(P)\psi(P+1) + 
			\sum_{n \le P} \Li(n) (\psi(n) - \psi(n+1))
			\right)
			\\
			&= \frac{S(q,a;m)}{\varphi(mq)} 
			\sum_{3 \le n \le P}
			\int_{n-1}^n \frac{\psi(n)}{\log x} \d x.
		\end{align*}
		Note that
		\[
		\sum_{3 \le n \le P}
		\int_{n-1}^{n}
		\frac{\d x}
		{(n-1)\log x}
		\ll \sum_{3 \le n \le P}
		\int_{n-1}^{n}
		\frac{\d x}
		{x\log x} = \log \log P - \log \log 2.
		\]
		Thus, using \eqref{eqn4.4} to replace each $\psi(n)$ by $\psi(x)$, we obtain
		\begin{align*}
			\sum_{3 \le n \le P}
			\int_{n-1}^n \frac{\psi(n)}{\log x} \d x
			&= I_{f,G}(\bet) + O((\log P)^{2(C_k+1)}\lVert G\rVert_{\cS[2,P]}),
		\end{align*}
		which completes the proof.
	\end{proof}
	
	\section{Fourier decay}\label{sec5}
	
	Returning to the study of our weight function $\nu$, we need to show that it is suitably `pseudorandom'. This will then allow us to `transfer' solutions from the linearised equation (\ref{eqn2.7}) to our original equation (\ref{eqn2.6}). As in \cite{CC,Chow2018,CLP2021,Pre2021}, we accomplish this via a \emph{Fourier decay} estimate (together with the restriction estimates from the next section).
	
	\begin{lemma}
		\label{lem5.1}
		Let $\nu$ be as defined above, where $b\in[W\kappa]$ satisfies (\ref{eqn3.1}), and assume that $X$ is sufficiently large in terms of $w$. Then, for all $\alpha\in\T$, 
		\begin{equation} \label{eqn5.1}
			|\hat \nu(\alp) -
			\widehat{1_{[N]}} (\alp)|
			\ll_{h,\eps} w^{\eps-1/d} N.
		\end{equation}
	\end{lemma}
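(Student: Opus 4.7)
The plan is a Hardy--Littlewood circle method analysis, driven by the leading coefficient of the polynomial phase. Setting $\tet := \alp h_d/\lam(D)$, where $h_d$ denotes the leading coefficient of $h$, rewrite
\[
\hat\nu(\alp) = \frac{\varphi(W)}{W(h'(b),W)} S(\alp), \quad S(\alp) := \sum_{\substack{b < p \le X \\ p \equiv b \mmod{W\kap}}} h'(p)\log p \cdot e\bigl(\alp(h(p) - h(b))/\lam(D)\bigr),
\]
and exploit the identities $\lam(D) = W\kap(h'(b),W)$ and $\varphi(W\kap) = \varphi(W)\kap$ (the latter valid since $\kap$ is $w$-smooth). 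Apply Dirichlet's theorem to $\tet$ to find coprime $a \in \bZ$ and $q \in \bN$ with $q \le X^d/(\log X)^{2C_d}$ and $|q\tet - a| \le (\log X)^{2C_d}/X^d$.

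On the minor arcs where $q > (\log X)^{2C_d}$, Lemma \ref{lem4.2} applied with $F(y) = \alp(h(y) - h(b))/\lam(D)$ and $G = h'$ yields $|S(\alp)| \ll X^d/(\log X)^{\sig_d}$, so that $|\hat\nu(\alp)|$ together with the direct bound on $|\widehat{1_{[N]}}(\alp)|$ are both $\ll N/\log X$, negligible relative to $w^{\eps - 1/d} N$ for $X$ large.

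On the major arcs, Lemma \ref{lem4.3} applies with $f(y) = h(y) - h(b)$, $Q = \lam(D)$ and $m = W\kap$; the divisibility hypothesis $(f(b + W\kap x) - f(b))/\lam(D) \in \bZ[x]$ is precisely what the double $W$-trick arranges. Writing $\bet = \tet - a/q$,
\[
S(\alp) = \frac{\cS(q, a)}{\varphi(W\kap q)} I(\bet) + O(X^d e^{-c\sqrt{\log X}}),
\]
and the substitution $u = (h(t) - h(b))/\lam(D)$ converts $I(\bet)$ into $\lam(D) \int_0^N e(\bet u) \d u + O(\lam(D))$. On the principal arc ($q = 1$, $a = 0$, $\bet = \alp$), $\cS(1, 0) = 1$ and the prefactors collapse via the identities above to give $\hat\nu(\alp) = \widehat{1_{[N]}}(\alp) + O(1 + |\alp| N)$; the discrepancy $|\alp| N \ll (\log X)^{2C_d}$ is absorbed into the acceptable error for $X$ sufficiently large.

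For non-principal major arcs, parametrising $t = b + W\kap\ell$ gives
\[
\cS(q, a) = \sum_{\substack{\ell = 0 \\ (b + W\kap\ell, q) = 1}}^{q-1} e_q(a\tilde g(\ell)), \quad \tilde g(\ell) := \frac{h(b + W\kap\ell) - h(b)}{\lam(D)} \in \bZ[\ell].
\]
A $p$-adic valuation analysis of the coefficients of $\tilde g$ --- using that $W$ contains each prime $p \le w$ with the enormous multiplicity $100dw$, whereas $(h'(b), W) \mid V = \sqrt W$ forces $v_p((h'(b),W)) \le 50dw$ --- shows that $\tilde g(\ell) \equiv c_1 \ell \mmod{W}$, where $c_1 = h'(b)/(h'(b), W)$ is coprime to $W$. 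Factoring $\cS(q, a)$ by the Chinese remainder theorem as $q = q_1 q_2$ with $q_2$ coprime to $W\kap$, the $q_1$-factor reduces to a complete Gauss sum in the unit $c_1$, which vanishes for $q_1 > 1$; hence $\cS(q, a) = 0$ unless $q$ is coprime to $W$, in which case $q > w$. In this remaining case, Lemma \ref{lem2.2} (with $\cont(\tilde g)$ coprime to $W$, in the spirit of Lucier's bound on auxiliary polynomials) produces $|\cS(q, a)| \ll_h C^{\ome(q)} q^{1 - 1/d}$, delivering a non-principal contribution $\ll w^{\eps - 1/d}\min(N, 1/|\bet|)$ to $\hat\nu(\alp)$; the corresponding bound $|\widehat{1_{[N]}}(\alp)| \ll q$ is comparably small on the same arc. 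The main obstacle is the Gauss sum vanishing for $q_1 > 1$, which rests essentially on the extravagant prime power multiplicities in $W$ arranged by the double $W$-trick --- this is the entire reason for making $W$ so much larger than a naive single $W$-trick would demand.
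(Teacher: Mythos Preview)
Your approach mirrors the paper's exactly: a major/minor arc dissection, with the crux being the local sum analysis (the paper's Lemma~\ref{lem5.5}) showing $\cS(q,a)=0$ unless $(q,W)=1$, whence $q>w$ and Lemma~\ref{lem2.2} yields the $w^{\eps-1/d}$ saving.

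Two points to tidy. First, you approximate the leading coefficient $\tet=\alp h_d/\lam(D)$, but Lemma~\ref{lem4.3} with $f=h-h(b)$ and $Q=\lam(D)$ has its $\theta$ equal to $\alpha$, so the $\bet$ in the major arc asymptotic should be $\alpha-a'/q'$ for an approximation to $\alpha$; with your $\bet=\tet-a/q$ the substitution produces $\int_0^N e(\tet u)\d u$ rather than $\int_0^N e(\alpha u)\d u$, and the comparison with $\widehat{1_{[N]}}(\alpha)$ breaks. The paper sets $\tet=\alpha$ from the outset (see \eqref{eqn5.3}). Second, your valuation count only gives $v_j\equiv 0\pmod V$ for $j\ge 2$: with $v_p(W)=100dw$ and $v_p((h'(b),W))\le 50dw$, the coefficient $v_2=h''(b)W\kap/(2(h'(b),W))$ gets $p$-adic valuation $\ge 50dw-1$ from $W$ alone, short of the required $100dw$. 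The missing ingredient is $(h'(b),W)\mid V\mid\kap$, which the paper deduces from $W\mid\kap(h'(b),W)$; this extra factor of $\kap$ in the numerator restores full divisibility by $W$.
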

	
	\begin{remark}
		As in \cite[\S5]{CC}, the above lemma does not rely upon nor make any reference to sets $A\subseteq\cP_X$ or $\cA\subseteq[N]$.
	\end{remark}
	
	We study the Fourier transform $\hat{\nu}$ using the Hardy--Littlewood circle method and the exponential sum estimates established previously. We define the set of \emph{minor arcs}
	\begin{equation*}
		\fm :=\left\lbrace\alpha\in\T : \max \{q, X^d \| q \alp \| \} > (\log X)^{2C_d} \text{ for all } q\in\bN\right\rbrace.
	\end{equation*}
	The set of \emph{major arcs} $\fM:=\T\setminus\fm$ therefore consists of all $\alpha\in\T$ for which there exist $a,q\in\Z$ such that
	\begin{equation}
		\label{eqn5.2}
		1 \le q \le (\log X)^{2C_d},
		\qquad
		(q,a) = 1,
		\qquad
		X^d |q \alp - a| \le 
		(\log X)^{2C_d}.
	\end{equation}
	
	For convenience, we recall that
	\[
	\hat \nu(\alp) =
	\frac{\varphi(W)}{W(h'(b),W)}
	\sum_{\substack{
			b < p \le X \\
			p \equiv b \mmod{W \kap}}}
	h'(p) \log p \cdot e \left(
	\alp \frac{h(p)-h(b)}
	{\lam(D)} \right) \quad (\alpha\in\bT).
	\]
	We therefore observe that the results of the previous section may be applied to estimate $\hat \nu(\alp)$ upon taking
	\begin{equation}
		\label{eqn5.3}
		\tet = \alp, \quad f(y) = h(y) - h(b), \quad G=h', \quad Q=\lam(D),\quad m=W\kappa,
		\quad P = X.
	\end{equation}
	With this choice of parameters, we compute that
	\begin{equation}
		\label{eqn5.4}
		\lVert G\rVert_{\cS[2,P]} \ll_h X^{d-1}
	\end{equation}
	and
	\begin{equation}
		\label{eqn5.5}
		I_{f,G}(\beta)
		= \int_2^X e(\bet h(x)/\lam(D)) h'(x) \d x
		= \lam(D)\left( O_h(1) + \int_{0}^{N} e(\bet y)\d y\right).
	\end{equation}
	
	Our proof of Lemma \ref{lem5.1} for $\alpha\in\fm$ proceeds by the same strategy as in \cite[\S4]{Chow2018}: we show that $\hat{\nu}(\alpha)$ and $\hat{1}_{[N]}(\alpha)$ are both far smaller than the required upper bound. This is encapsulated in the following corollary of Lemma \ref{lem4.2}.
	
	\begin{cor}
		[Minor arc estimate]
		\label{cor5.3}
		If $\alpha\in\fm$, then
		\begin{equation*}
			\hat{\nu}(\alpha) \ll_{h} X^d(\log X)^{-\sigma_d} \qquad \text{and} \qquad \widehat{1_{[N]}} (\alp) \ll X^d(\log X)^{-2C_d}.
		\end{equation*}
	\end{cor}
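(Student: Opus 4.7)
The plan is to handle the two bounds separately, with the bound on $\hat\nu(\alpha)$ being the more substantive. For $\widehat{1_{[N]}}(\alpha)$, note that $\alpha\in\fm$ implies, by applying the defining condition with $q=1$, that $X^d\|\alpha\|>(\log X)^{2C_d}$. The standard geometric-sum estimate $|\widehat{1_{[N]}}(\alpha)|\le 1/(2\|\alpha\|)$ then immediately yields $|\widehat{1_{[N]}}(\alpha)|\ll X^d(\log X)^{-2C_d}$.

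For the bound on $\hat\nu(\alpha)$, the plan is to apply Lemma \ref{lem4.2} directly to the defining sum. Since the prefactor $\varphi(W)/(W(h'(b),W))$ is bounded by $1$ and the scalar $e(-\alpha h(b)/\lambda(D))$ has modulus one, it suffices to estimate
$$S:=\sum_{\substack{b<p\le X\\ p\equiv b \mmod{W\kappa}}} h'(p)\log p\cdot e\bigl((\alpha/\lambda(D))h(p)\bigr).$$
Applying Lemma \ref{lem4.2} with the choice of parameters in \eqref{eqn5.3}, one takes $F(y)=(\alpha/\lambda(D))h(y)$, a polynomial of degree $d$ whose leading coefficient is $\theta:=\alpha a_d/\lambda(D)$, where $a_d$ denotes the leading coefficient of $h$. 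The requisite coprimality $(b,W\kappa)=1$ follows from $(b,W)=1$ (by the $W$-trick) together with the $w$-smoothness of $\kappa$ and the fact that $W$ contains every prime $\le w$. The bound $\|h'\|_{\cS[2,X]}\ll_h X^{d-1}$ supplied by \eqref{eqn5.4} then combines with Lemma \ref{lem4.2} to yield $|S|\ll_h X^d(\log X)^{-\sigma_d}$, and hence the desired bound on $\hat\nu(\alpha)$.

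The main technical point is verifying the minor-arc hypothesis $\max\{q,X^d\|q\theta\|\}>(\log X)^{2C_d}$ required by Lemma \ref{lem4.2}. Suppose for contradiction some $q\le(\log X)^{2C_d}$ violates this. Multiplying by $\lambda(D)$ and setting $q'=qa_d$, one obtains $q'\le a_d(\log X)^{2C_d}$ and $X^d\|q'\alpha\|\le\lambda(D)(\log X)^{2C_d}$. The obstacle is that $a_d$ and $\lambda(D)\le W^{2d}$ are constants depending on $h$ and $w$ rather than unity, so this does not \emph{a priori} contradict $\alpha\in\fm$. However, since the threshold constant $C_d=2^{8d}\sigma_d$ in \S\ref{sec4} is only required to be ``large in terms of $d$'', one may take it sufficiently large that the factors $a_d$ and $\lambda(D)$ are absorbed into $(\log X)^{2C_d}$ for $X$ sufficiently large; this produces the desired contradiction with $\alpha\in\fm$. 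Managing these absorbable constants is the only subtlety in the argument; once handled, both displays follow at once.
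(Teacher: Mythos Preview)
Your treatment of $\widehat{1_{[N]}}(\alpha)$ is correct and matches the paper.

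For $\hat\nu(\alpha)$, you correctly observe that the leading coefficient of $F$ is $\theta = \alpha a_d/\lambda(D)$ rather than $\alpha$, a point the paper glosses over in \eqref{eqn5.3}. However, your proposed fix does not work. Enlarging $C_d$ rescales both your derived bounds $q' \le a_d(\log X)^{2C_d}$, $X^d\|q'\alpha\| \le \lambda(D)(\log X)^{2C_d}$ \emph{and} the threshold $(\log X)^{2C_d}$ in the definition of $\fm$ by the same amount, so no contradiction is obtained regardless of how large $C_d$ is: the inequality $a_d(\log X)^{2C_d}\le(\log X)^{2C_d}$ simply fails whenever $|a_d|>1$. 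Note also that $\sigma_d$, hence $C_d$, is fixed in terms of $d$ before $w$ is chosen, so $C_d$ cannot be made to depend on $\lambda(D)$.

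The correct repair exploits the factor-of-two slack already built into Lemma~\ref{lem4.2} relative to Lemma~\ref{lem4.1}. Re-run the Dirichlet step inside the proof of Lemma~\ref{lem4.2} at level $(\log X)^{C_d}$ rather than $(\log X)^{2C_d}$: one obtains coprime $q,a$ with $q \le X^d/(\log X)^{C_d}$ and $|q\theta - a| \le (\log X)^{C_d}/X^d$. If $q \le (\log X)^{C_d}$, then $q' := q|a_d|$ satisfies $q' \le |a_d|(\log X)^{C_d}$ and $X^d\|q'\alpha\| \le \lambda(D)(\log X)^{C_d}$; since $X$ is large in terms of $h$ and $w$, both right-hand sides are at most $(\log X)^{2C_d}$, contradicting $\alpha \in \fm$. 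Hence $q > (\log X)^{C_d}$, and Lemma~\ref{lem4.1} applies directly, its hypothesis requiring only $(\log X)^{C_d} < q \le X^d/(\log X)^{C_d}$. The remainder of your argument then goes through unchanged.
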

	
	\begin{proof}
		In view of (\ref{eqn5.3}) and (\ref{eqn5.4}), the first estimate follows immediately from Lemma \ref{lem4.2}. For the second estimate, we deduce from the definition of $\fm$ that
		\begin{equation*}
			\widehat{1_{[N]}} (\alp) = \sum_{n=1}^{N}e(\alpha n) \ll \lVert\alpha\rVert^{-1} \leqslant X^d(\log X)^{-2C_d} \qquad (\alpha\in\fm),
		\end{equation*}
		as required.
	\end{proof}
	
	We similarly establish an asymptotic formula for $\hat{\nu}(\alpha)$ on the major arcs by invoking Lemma \ref{lem4.3}. Define
	\[
	S(q,a) := \sum_{\substack{t \mmod{W \kap q} \\ (t,q) = 1 \\ t \equiv b \mmod{W \kap}}} e_q \left(
	a \frac{h(t) - h(b)}{\lam(D)} \right), \qquad I(\bet) := 
	\int_0^N e(\bet y) \d y.
	\]
	
	\begin{cor}
		[Major arc asymptotic] 
		\label{cor5.4}
		Suppose $(\alp,q,a) \in \bR \times \bN \times \bZ$ with \eqref{eqn5.2},
		and put $\bet = \alp - a/q$. Let $c > 0$ be constant, small in terms of $C_d$. Then
		\[
		\hat \nu(\alp) = \frac{\varphi(W\kap)}{\varphi(W\kap q)}
		S(q,a) I(\bet)
		+ O_h(Ne^{-c \sqrt{\log X}}).
		\]
	\end{cor}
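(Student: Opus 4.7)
The plan is to apply the general major arc asymptotic, Lemma \ref{lem4.3}, under the parameter dictionary \eqref{eqn5.3}: take $\theta = \alpha$, $f(y) = h(y) - h(b)$, $G = h'$, $Q = \lam(D)$, $m = W\kap$, $P = X$ and $k = d$. The main term of Lemma \ref{lem4.3} will then be repackaged using \eqref{eqn5.4} and \eqref{eqn5.5}, while the prefactor $\varphi(W)/(W(h'(b),W))$ in $\hat\nu(\alpha)$ equals $\varphi(W\kap)/\lam(D)$ in view of $\lam(D) = W\kap(h'(b),W)$ and the identity $\varphi(W\kap) = \kap \varphi(W)$, which is valid because $\kap$ is $w$-smooth and $W$ is divisible by every prime $\le w$.

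The key prerequisite to verify is the hypothesis \eqref{eqn4.3}, which here demands $\lam(D) \mid h(b + W\kap x) - h(b)$ coefficient-wise in $\bZ[x]$. Taylor expansion gives
\[
h(b+W\kap x) - h(b) = \sum_{j=1}^{d} \frac{h^{(j)}(b)}{j!} (W\kap)^j x^j,
\]
with integer coefficients. For $j=1$, the term $h'(b) W\kap$ is divisible by $\lam(D) = W\kap(h'(b),W)$ since $(h'(b),W) \mid h'(b)$; for $j \ge 2$, the factor $(W\kap)^{j-1}$ is divisible by $W$ and $(h'(b),W) \mid W$, so divisibility by $\lam(D)$ persists. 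The major arc conditions \eqref{eqn5.2} supply $q \le (\log X)^{2C_d}$ and $|q\alpha - a| \le (\log X)^{2C_d}/X^d \le \lam(D)(\log X)^{2C_d}/X^d$, matching the requirements of Lemma \ref{lem4.3}. The Gauss sum produced there coincides with the $S(q,a)$ defined just before Corollary \ref{cor5.4} via the conversion $e_{Qq}(af(t)) = e_q(af(t)/\lam(D))$.

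Assembling the pieces: Lemma \ref{lem4.3}, together with $\lVert h'\rVert_{\cS[2,X]} \ll_h X^{d-1}$ from \eqref{eqn5.4}, yields
\[
\sum_{\substack{p \le X \\ p \equiv b \mmod{W\kap}}} e\!\left(\alpha \frac{h(p)-h(b)}{\lam(D)}\right) h'(p) \log p = \frac{S(q,a)}{\varphi(W\kap q)} I_{f,G}(\beta) + O_h\!\left(X^d e^{-c\sqrt{\log X}}\right),
\]
with the finitely many primes $p \le b$ omitted from $\hat\nu(\alpha)$ contributing only $O_h((W\kap)^d \log(W\kap))$. Multiplying through by $\varphi(W\kap)/\lam(D)$ and substituting \eqref{eqn5.5}, the two copies of $\lam(D)$ cancel and the main term takes the advertised shape $\varphi(W\kap) S(q,a) I(\beta)/\varphi(W\kap q)$. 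The residual $O_h(1)$ from \eqref{eqn5.5} contributes at most $q\varphi(W\kap)/\varphi(W\kap q) \le \prod_{p \mid q}(1-1/p)^{-1} \ll \log\log X$ after the prefactor, while the Siegel--Walfisz error becomes $\ll_h \varphi(W\kap) N e^{-c\sqrt{\log X}}$ via $X^d \asymp_h N \lam(D)$. Because $w$, and hence $W$ and $\kap$, is fixed before $X$ is taken large, the factor $\varphi(W\kap)$ is absorbed into a slightly smaller exponent, yielding the claimed bound. The only mildly delicate step is the divisibility verification for \eqref{eqn4.3}; the remainder is bookkeeping.
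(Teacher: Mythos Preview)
Your proposal is correct and follows exactly the same approach as the paper: apply Lemma \ref{lem4.3} with the parameter dictionary \eqref{eqn5.3}, then repackage via \eqref{eqn5.4} and \eqref{eqn5.5}. The paper's own proof is a terse two lines (``combining \eqref{eqn5.3} with \eqref{eqn5.4} and \eqref{eqn5.5}, the desired formula is provided by Lemma \ref{lem4.3}''); you have simply filled in the details it omits, including the Taylor-expansion verification of \eqref{eqn4.3}, which the paper defers to the proof of Lemma \ref{lem5.5}.
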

	
	\begin{proof}
		Recall that $\lambda(D)N = h(X)\asymp_h X^d$.
		Thus, by combining (\ref{eqn5.3}) with (\ref{eqn5.4}) and (\ref{eqn5.5}), the desired formula is provided by Lemma \ref{lem4.3}.
	\end{proof}
	
	To elucidate this formula further, we estimate $S(q,a)$.
	
	\begin{lemma}
		\label{lem5.5}
		Let $a,q\in\Z$ with $q \ge 2$ and $(q,a) = 1$. Then
		\[
		S(q,a) \ll_{h,\eps} \min\left\lbrace q^{\eps - 1/d}\varphi(q),\; \frac{\varphi(W\kap q)}
		{\varphi(W \kap)}w^{\eps-1/d}\right\rbrace.
		\]
		Furthermore, if $(q,W) > 1$, then $S(q,a) = 0$.
	\end{lemma}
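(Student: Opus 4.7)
My plan is to rewrite $S(q,a)$ via the substitution $t = b + W\kap u$ as
\[
S(q,a) = \sum_{\substack{0 \le u < q \\ (b + W\kap u, q) = 1}} e_q(a g(u)), \qquad g(u) := \frac{h(b + W\kap u) - h(b)}{\lam(D)}.
\]
A direct binomial expansion, combined with the relation $\lam(D) = W\kap(h'(b),W)$, shows that $g \in \bZ[u]$ has degree $d$ with linear coefficient $g_1 = h'(b)/(h'(b),W)$ and leading coefficient $g_d = c_d (W\kap)^{d-1}/(h'(b),W)$, where $c_d$ is the leading coefficient of $h$. The Chinese remainder theorem then factors $S(q,a)$ multiplicatively over the prime-power divisors of $q$, so it suffices to analyse each $S(p^e, a')$ with $(a', p) = 1$.

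I would tackle the vanishing statement first. Suppose $p \mid W$. Then $p \mid W\kap$, so the condition $(b + W\kap u, p^e) = 1$ collapses to $(b,p) = 1$, which is automatic from $(b,W) = 1$. The essential input is the divisibility $(h'(b), W) \mid V = \sqrt W$ delivered by the double-$W$-trick, forcing $\ord_p((h'(b),W)) \le \ord_p(W)/2$; hence $p \nmid g_1$, while $\ord_p(g_k) \ge \ord_p(W)/2 \ge 1$ for every $2 \le k \le d$ (because $(W\kap)^{k-1}$ contributes at least $(k-1)\ord_p(W)$ to the numerator and $(h'(b),W)$ removes at most $\ord_p(W)/2$ from it). For $e=1$ we therefore get $g(u) \equiv g_1 u \pmod{p}$, so $S(p,a') = \sum_{u=0}^{p-1} e_p(a' g_1 u) = 0$ by orthogonality. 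For $e \ge 2$, I would write $u = u_0 + p^{e-1} u_1$ with $u_0 \in [0, p^{e-1})$ and $u_1 \in [0,p)$, Taylor-expand to obtain $g(u) \equiv g(u_0) + p^{e-1} u_1 g'(u_0) \pmod{p^e}$, and observe that the inner sum $\sum_{u_1=0}^{p-1} e_p(a' u_1 g'(u_0))$ vanishes because $g'(u_0) \equiv g_1 \pmod{p}$ is a unit. Multiplying through CRT then gives $S(q,a) = 0$ whenever $(q, W) > 1$.

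For the upper bound I may assume $(q,W) = 1$, so that $w$-smoothness of $\kap$ yields $(q, W\kap) = 1$. Apply Lemma~\ref{lem2.2} to $g$ with the modulus $W\kap$ in place of $W$: the decomposition $q = q_1 q_2$ from the lemma gives $q_1 = 1$, $q_2 = q$, and for every prime $p \mid q$ we have $\ord_p(g_d) = \ord_p(c_d)$ (since $p \nmid W\kap$ and $(h'(b),W) \mid W$). Hence $\gcd(g_d, q) \le |c_d| \ll_h 1$, and Lemma~\ref{lem2.2} delivers $|S(q,a)| \ll_h C^{\ome(q)} q^{1-1/d}$. Inserting the classical estimates $C^{\ome(q)} \ll_\eps q^\eps$ and $q/\varphi(q) \ll \log\log q \ll_\eps q^\eps$ recasts this as $\ll_{h,\eps} \varphi(q) q^{\eps - 1/d}$ (after renaming $\eps$), yielding the first claimed bound. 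For the second, $(q, W) = 1$ forces $\varphi(W\kap q)/\varphi(W\kap) = \varphi(q)$, and the smallest prime divisor of $q$ exceeds $w$, so $q > w$; for $\eps < 1/d$ this gives $q^{\eps - 1/d} \le w^{\eps - 1/d}$, so the first bound implies the second.

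The main obstacle is the vanishing step: it leans on the fine divisibility $(h'(b), W) \mid \sqrt W$ supplied by the double-$W$-trick, together with careful bookkeeping of $p$-adic valuations of the Taylor coefficients of $g$. Once this is in place, the upper bound follows fairly mechanically from Rice's exponential-sum lemma.
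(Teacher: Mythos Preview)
Your proof is correct and follows essentially the same approach as the paper: both rewrite $S(q,a)$ via the substitution $t = b + W\kap u$, identify the linear coefficient $g_1 = h'(b)/(h'(b),W)$ as a unit modulo every prime $p \mid W$ using the double-$W$-trick input $(h'(b),W) \mid \sqrt W$, and then invoke Rice's Lemma~\ref{lem2.2} for the upper bound once $(q,W)=1$ is secured. The only organisational difference is in the vanishing step: the paper factors $q = q_1 q_2$ into its $w$-smooth and $w$-rough parts and treats the whole $w$-smooth factor $q_1$ at once via the substitution $x = y + q_1' z$ (using the stronger fact $W \mid v_j$ for $j \ge 2$, which comes from $(h'(b),W) \mid \kap$), whereas you factor over prime powers and use a Hensel-type Taylor expansion at each $p^e$. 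Both routes are standard and rest on the same divisibility bookkeeping.
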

	
	\begin{proof}
		Write $q = q_1q_2$, where $q_1 \in \bN$ is $w$-smooth and $q_2$ is $w$-rough. Observe that
		\[
		S(q,a) = \sum_{\substack{
				x \mmod q \\ 
				(W\kap x + b, q) = 1}} 
		e_q(ag(x)),
		\]
		where
		\[
		g(x) = \frac{h(W \kap x + b) - h(b)}{\lam(D)} =
		\frac{h(W \kap x + b) - h(b)}{W\kap (h'(b), W\kap)}
		\in \bZ[x]
		\]
		by Taylor's theorem. As $(q_1,q_2) 
		= 1$, a standard calculation reveals that 
		\[
		S(q,a) = S(q_1,A_1) S(q_2,A_2), 
		\]
		where
		\[
		\frac{a}{q} = \frac{A_1}{q_1} + \frac{A_2}{q_2},
		\]
		as is noted in the proof of \cite[Lemma 9]{Rice2013}. Observe that $(A_1,q_1) = (A_2,q_2) = 1$.
		
		As $q_1$ is $w$-smooth and $(b,W)=1$, we always have $(W\kap x+b, q_1) = 1$.
		Let
		\[
		H = (q_1,W), \qquad
		q_1 = Hq_1', \qquad
		W = HW',
		\]
		so that $(q_1',W') = 1$. Writing $x = y + q_1'z$ and 
		\[
		g(x) = v_d x^d + \cdots 
		+ v_1 x
		\]
		gives
		\begin{align*}
			S(q_1,A_1) &= 
			\sum_{y \mmod{q_1'}} \:
			\sum_{z \mmod H} 
			e_{Hq_1'}(A_1 \sum_{j \le d}
			v_j (y + q_1'z)^j).
		\end{align*}
		As
		\[
		(h'(b),W) \mid V = \sqrt W,
		\qquad
		W \mid \kap (h'(b),W),
		\]
		we must have $(h'(b),W) \mid V \mid \kap$. Thus, for $2 \le j \le d$, we have
		\[
		v_j = \frac{h^{(j)}(b) 
			(W \kap)^{j-1}}
		{j! (h'(b),W)} \equiv 0 \mmod W.
		\]
		Now
		\[
		S(q_1,A_1) = \sum_{y \le q'_1} e_{q_1} (A_1 g(y))
		\sum_{z \le H} 
		e_H(A_1 v_1 z),
		\qquad
		v_1 = \frac{h'(b)}{(h'(b),W)}.
		\]
		For each prime $p \le w$, we have $\ord_p(h'(b)) < \ord_p(W)$, and so $\ord_p(v_1) = 0$. Therefore $(v_1,W) = 1$, so $(H, A_1 v_1 ) =1$, whence
		\[
		S(q_1,A_1) = \begin{cases}
			1, &\text{if } q_1 = 1 \\
			0, &\text{if } q_1 \ge 2.
		\end{cases}
		\]
		This completes the proof of the assertion that $S(q,a)=0$ whenever $(q,W)>1$.
		
		In view of this result, we may henceforth assume that $q_1=1$ and $q_2 = q \ge 2$. In particular, we have $q > w$. Let us denote by $\ell_h$ the leading coefficient of $h$. Then
		\[
		v_d = \frac{\ell_h (W \kap)^{d-1}} {(h'(b),W)},
		\qquad (q,W) = 1,
		\]
		so $(v_d, q) \ll_h 1$. Now Lemma \ref{lem2.2} provides a constant $C=C(d) > 1$ such that
		\[
		S(q,a) = S(q, A_2) \ll_h
		C^{\ome(q)} q^{1-1/d}
		\ll_{d,\eps} q^{\eps - 1/d} \varphi(q)< \frac{\varphi(W\kap q)}{\varphi(W \kap)} w^{\eps-1/d},
		\]
		as required.
	\end{proof}
	
	These two results allow us to establish our Fourier decay estimate.
	
	\begin{proof}
		[Proof of Lemma \ref{lem5.1}]
		Corollary \ref{cor5.3} gives \eqref{eqn5.1} for all $\alpha\in\fm$. We henceforth assume that $\alpha\in\fM$.
		We begin by considering the case where \eqref{eqn5.2} holds with $q =1$. As demonstrated in \cite[\S 4]{Chow2018}, Euler–Maclaurin summation delivers the bound
		\begin{equation*}
			\widehat{1_{[N]}} (\alp)
			- I(\alpha) \ll_h 
			(\log X)^{2C_d}.
		\end{equation*}
		Applying the triangle inequality and Corollary \ref{cor5.4} therefore gives
		\begin{equation*}
			|\hat{\nu}(\alpha) - \widehat{1_{[N]}} (\alp)| \leqslant |\hat{\nu}(\alpha) - I(\alpha)| + |\widehat{1_{[N]}} (\alp) -I(\alpha)| \ll_h Ne^{-c\sqrt{\log X}}.
		\end{equation*}
		
		Finally, suppose that \eqref{eqn5.2} holds with $q \ge 2$. We note from \cite[Equation (4.3)]{Chow2018} that
		\[
		\widehat{1_{[N]}} (\alp)
		\ll q \le (\log X)^{2 C_d}.
		\]
		Thus, in view of the trivial estimate $|I(\bet)| \le N$, the desired result follows by combining Corollary \ref{cor5.4} with Lemma \ref{lem5.5}.
	\end{proof}
	
	Before moving on, we record the following consequence of Corollary \ref{cor5.4} and Lemma \ref{lem5.5}, which we use in the next section.
	
	\begin{cor}[Major arc estimate]
		\label{cor5.6}
		Let $\alpha\in\T$ and $q\in\N$. If \eqref{eqn5.2} holds for some $a \in \bZ$, then
		\begin{equation*}
			\hat \nu(\alp)
			\ll_{h,\eps} q^{\eps - 1/d}
			\min \{ N, \| \alp - a/q \|^{-1} \} + 
			O(Ne^{-c \sqrt{\log X}}).
		\end{equation*}
	\end{cor}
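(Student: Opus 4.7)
The plan is to combine the major arc asymptotic of Corollary \ref{cor5.4} with the exponential sum bound of Lemma \ref{lem5.5}, together with a trivial estimate for $I(\bet)$. Under the major arc hypothesis \eqref{eqn5.2}, Corollary \ref{cor5.4} supplies
\[
\hat \nu(\alp) = \frac{\varphi(W\kap)}{\varphi(W\kap q)} S(q,a) I(\bet) + O_h(Ne^{-c\sqrt{\log X}}),
\]
where $\bet = \alp - a/q$. The error term already has the required shape, so the task reduces to controlling the arithmetic factor and the oscillatory integral.

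For $I(\bet)$, I would use the direct estimate $|I(\bet)| \le \min\{N, (\pi|\bet|)^{-1}\}$. The major arc conditions force $|\bet| \ll (\log X)^{2C_d} X^{-d}$, which is much less than $1/2$ for $X$ large, so $\|\alp - a/q\| = |\bet|$ and consequently $|I(\bet)| \ll \min\{N, \|\alp - a/q\|^{-1}\}$.

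For the arithmetic ratio, I would split into two cases. If $(q,W) > 1$, Lemma \ref{lem5.5} asserts $S(q,a) = 0$, leaving only the admissible error term. Otherwise $(q,W) = 1$; since $W$ is divisible by every prime $\le w$ to high multiplicity while $\kap$ is $w$-smooth, every prime divisor of $q$ must exceed $w$, whence $\gcd(W\kap, q) = 1$ and therefore $\varphi(W\kap q) = \varphi(W\kap)\varphi(q)$. The first bound of Lemma \ref{lem5.5} then yields
\[
\frac{\varphi(W\kap)}{\varphi(W\kap q)} S(q,a) \ll_{h,\eps} q^{\eps-1/d}
\]
(the $q = 1$ case being trivial, with $S(1,0) = 1$). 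Multiplying by the bound on $|I(\bet)|$ completes the argument. I do not foresee any genuine obstacle here; the only subtle point worth checking carefully is the coprimality $\gcd(W\kap, q) = 1$, which rests on the vanishing clause of Lemma \ref{lem5.5} combined with the $w$-smoothness of $\kap$ engineered into the $W$-trick of \S\ref{sec3}.
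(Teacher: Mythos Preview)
Your proof is correct and follows essentially the same route as the paper: apply Corollary \ref{cor5.4}, bound $I(\bet)$ by $\min\{N,\|\bet\|^{-1}\}$, and control the arithmetic factor via Lemma \ref{lem5.5}. The only cosmetic difference is that the paper avoids your case split by invoking the elementary inequality $\varphi(W\kap)\varphi(q)\le\varphi(W\kap q)$ directly, which together with the bound $S(q,a)\ll_{h,\eps} q^{\eps-1/d}\varphi(q)$ handles all $q$ at once; your detour through $(q,W)>1$ versus $(q,W)=1$ reaches the same conclusion with a little more bookkeeping.
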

	
	\begin{proof}
		Integrating by parts delivers the standard estimate
		\begin{equation}
			\label{eqn5.6}
			I(\bet) \ll \min \{N,
			\| \bet \|^{-1} \} \qquad (\beta\in\T).
		\end{equation}
		Incorporating the elementary inequality $\varphi(W\kappa)\varphi(q)\leqslant\varphi(W\kappa q)$ and Lemma \ref{lem5.5} delivers
		\begin{equation*}
			\frac{\varphi(W\kappa)}{\varphi(W\kappa q)}S(q,a)I(\alp - a/q) \ll_{h,\eps} q^{\eps-1/d}
			\min \{N,\|  \alp - a/q \|^{-1} \}.
		\end{equation*}
		The required result now follows from Corollary \ref{cor5.4}.
	\end{proof}
	
	\section{Restriction estimates}\label{sec6}
	
	Recall from \S\ref{sec3} that $\nu$ is supported on the set
	\begin{equation*}
		\left \{n\in[N]: n=\frac{h(p) - h(b)}{\lam(D)},\; p\in\cP_X,\; p\equiv b \mmod{W\kappa} \right \}.
	\end{equation*}
	After linearising, we wish to solve (\ref{eqn2.7}) with the $n_i$ drawn from a dense subset $\cA$ of the above set. This leads us to the study of functions $\phi:\bZ\to\bC$, such as the indicator function of $1_{\cA}$, which are majorised by $\nu$, meaning that $|\phi|\leqslant\nu$.
	
	The purpose of this section is to establish two restriction estimates. These will then be used in the next section to execute the transference argument. The first restriction estimate is for the weight $\nu$ and is needed to transfer between `dense variables' $x_i$ and $n_i$ in Equations (\ref{eqn2.6}) and (\ref{eqn2.7}), respectively. Our second restriction estimate concerns an auxiliary weight function $\nu_D$ and is required for interpolation.
	
	Throughout this section, we define $\nu$ as in \S\ref{subsec3.2} for a fixed choice of $b\in[W\kappa]$ satisfying (\ref{eqn3.1}). We also let $T = T(d)$ be as in \S\ref{sec2}.
	
	\subsection{Restriction I}
	
	We begin with the following restriction estimate for $\nu$. 
	
	\begin{prop} \label{prop6.1} Let $E > 2T$, and let $\phi: \bZ \to \bC$ with $|\phi| \le \nu$. Then
		\[
		\int_\bT |\hat \phi(\alp)|^E \d \alp \ll_{h,E} N^{E-1}.
		\]
	\end{prop}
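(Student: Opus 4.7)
The plan is a classical Tomas--Stein-type interpolation argument from restriction theory: establish an $L^{2T}$ moment estimate for $\hat \nu$, then interpolate with the trivial $L^\infty$ bound, using the majorisation $|\phi| \le \nu$ to transfer control from $\hat \nu$ to $\hat \phi$.

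First I would reduce the $L^{2T}$ estimate from $\hat \phi$ to $\hat \nu$. Expanding via orthogonality,
\[
\int_\bT |\hat \phi(\alp)|^{2T} \d \alp = \sum_{n_1 + \cdots + n_T = n_{T+1} + \cdots + n_{2T}} \phi(n_1) \cdots \phi(n_T) \overline{\phi(n_{T+1})} \cdots \overline{\phi(n_{2T})},
\]
so taking absolute values inside the sum and invoking $|\phi| \le \nu$ (with $\nu \ge 0$) yields $\int_\bT |\hat \phi|^{2T} \d \alp \le \int_\bT |\hat \nu|^{2T} \d \alp$.

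Next I would bound the latter integral. Unpacking the definition of $\nu$ from \S\ref{subsec3.2}, the same orthogonality expansion rewrites $\int_\bT |\hat \nu|^{2T} \d \alp$ as
\[
\left( \frac{\varphi(W)}{W (h'(b),W)} \right)^{2T} \sum \prod_{i=1}^{2T} h'(p_i) \log p_i,
\]
where the outer sum runs over $2T$-tuples of primes $p_i \in (b,X]$ with $p_i \equiv b \pmod{W\kap}$, subject to the polynomial relation $h(p_1) + \cdots + h(p_T) = h(p_{T+1}) + \cdots + h(p_{2T})$ (the $h(b)$ terms and the denominator $\lam(D)$ cancel after clearing). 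Bounding $h'(p_i) \log p_i \ll_h X^{d-1} \log X$ and invoking the definition of $T = T(d)$ from \S\ref{sec2} to estimate the count of underlying $2T$-tuples by $O_{h,\eps}(X^{2T-d+\eps})$, then converting to $N$ via $N \asymp_h X^d / \lam(D)$ (treating $W$, $\kap$, $(h'(b),W)$ as bounded by powers of $W$ and hence as $O_h(1)$ in the relevant regime), gives
\[
\int_\bT |\hat \nu|^{2T} \d \alp \ll_{h,\eps} N^{2T-1+\eps}.
\]

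Finally, for $E > 2T$, I would interpolate using the trivial $L^\infty$ bound $\|\hat \phi\|_\infty \le \|\phi\|_1 \le \|\nu\|_1 = \hat \nu(0) \ll_h N$, the last estimate coming from partial summation and Siegel--Walfisz exactly as in the proof of Lemma \ref{lem3.1}. This gives
\[
\int_\bT |\hat \phi|^E \d \alp \le \|\hat \phi\|_\infty^{E-2T} \int_\bT |\hat \phi|^{2T} \d \alp \ll_h N^{E-2T} \cdot N^{2T-1+\eps} = N^{E-1+\eps},
\]
which delivers the claim: the $\eps$-loss is harmless because $E > 2T$ is strict and $\eps$ may be chosen arbitrarily small relative to $E-2T$, with the implicit constant absorbed into the dependence on $h$ and $E$. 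The main technical obstacle is the bookkeeping in the middle step: the various $W$-dependent prefactors, the divisibility structure $\lam(D) = W\kap(h'(b),W)$, and the relation between powers of $X$ and powers of $N$ must all align so that the weighted count collapses cleanly to $N^{2T-1+\eps}$. The interpolation itself is routine once the moment bound is in hand.
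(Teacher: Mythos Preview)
Your argument establishes only the $\eps$-slack bound
\[
\int_\bT |\hat\phi(\alp)|^E \d\alp \ll_{h,E,\eps} N^{E-1+\eps},
\]
not the sharp bound $\ll_{h,E} N^{E-1}$ claimed in the proposition. The gap is in your final sentence: choosing $\eps$ small relative to $E-2T$ does \emph{not} allow you to absorb $N^\eps$ into the implied constant, since $N^\eps \to \infty$ for any fixed $\eps > 0$. The strict inequality $E > 2T$ is irrelevant here; interpolating a sharp $L^\infty$ bound against an $\eps$-slack $L^{2T}$ bound can never do better than $N^{E-1+\eps}$. Closing this gap is precisely the content of \emph{epsilon removal}, and it requires pointwise Fourier input on $\hat\nu$ that your argument never invokes.

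The paper's proof is structured around exactly this point. It first passes to the auxiliary majorant $\mu$ with $\nu \le (\log X)\mu$, and establishes the $\eps$-slack $2T$-th moment for $\mu$ (Lemma~\ref{lem6.3}) by essentially your step~2. It then brings in pointwise major/minor-arc control for $\hat\mu$ (Lemmas~\ref{lem6.4}--\ref{lem6.5}, via Baker's Weyl-sum estimates) and feeds this, together with the $\eps$-slack moment, into a general epsilon-removal lemma to obtain a \emph{log-slack} bound $\int_\bT |\hat\psi|^v \d\alp \ll N^{v-1}(\log X)^v$ for all $v > 2T$ and $|\psi| \le \mu$ (Lemma~\ref{lem6.6}). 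Rescaling gives the same bound for $|\phi| \le \nu$ with an extra $(\log X)^{2v}$; a second application of epsilon removal, now using the major/minor-arc bounds for $\hat\nu$ itself (Corollaries~\ref{cor5.3} and~\ref{cor5.6}), eliminates the logarithms and yields the sharp $N^{E-1}$. What your interpolation is missing is any use of this pointwise decay of $\hat\nu$ away from rationals with small denominator; without it there is no mechanism to remove the $\eps$.
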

	
	This is easily bootstrapped to the following restriction estimate for $\nu + 1_{[N]}$, see the deduction of \cite[Lemma 6.2]{CC}.
	
	\begin{prop} \label{prop6.2} 
		Let $E > 2T$, and let $\phi: \bZ \to \bC$ with $|\phi| \le \nu + 1_{[N]}$. Then
		\[
		\int_\bT |\hat \phi(\alp)|^E \d \alp \ll_{h,E} N^{E-1}.
		\]
	\end{prop}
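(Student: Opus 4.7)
\textbf{Proof plan for Proposition \ref{prop6.2}.} The strategy is to split the majorant additively into a prime part and a flat part, apply Proposition \ref{prop6.1} to the first, and handle the second by a trivial interpolation estimate, then recombine using Minkowski's inequality in $L^E(\bT)$.

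First I would decompose $\phi=\phi_1+\phi_2$ pointwise, where
\[
\phi_1(n):=\phi(n)\cdot\frac{\nu(n)}{\nu(n)+1_{[N]}(n)},
\qquad
\phi_2(n):=\phi(n)\cdot\frac{1_{[N]}(n)}{\nu(n)+1_{[N]}(n)},
\]
interpreting each summand as zero whenever the denominator vanishes. The hypothesis $|\phi|\le\nu+1_{[N]}$ immediately gives $|\phi_1|\le\nu$ and $|\phi_2|\le1_{[N]}$. Since Proposition \ref{prop6.1} is exactly designed for majorants of $\nu$, it delivers
\[
\int_\bT|\hat\phi_1(\alp)|^E\d\alp\ll_{h,E}N^{E-1}.
\]

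For $\phi_2$, I would argue by interpolation between the trivial $L^\infty$ bound and Parseval's identity. Indeed, since $|\phi_2|\le 1_{[N]}$ is supported in $[N]$ with values in $[0,1]$, one has the pointwise estimate $\|\hat\phi_2\|_{L^\infty(\bT)}\le N$ and, by Parseval, $\|\hat\phi_2\|_{L^2(\bT)}^2=\sum_n|\phi_2(n)|^2\le N$. Using $E>2T\ge 2$ and the elementary inequality $\|F\|_{L^E}\le\|F\|_{L^\infty}^{1-2/E}\|F\|_{L^2}^{2/E}$, we obtain
\[
\|\hat\phi_2\|_{L^E(\bT)}\le N^{1-2/E}\cdot N^{1/E}=N^{1-1/E},
\]
which rearranges to $\int_\bT|\hat\phi_2(\alp)|^E\d\alp\le N^{E-1}$.

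Finally, Minkowski's inequality in $L^E(\bT)$ gives
\[
\left(\int_\bT|\hat\phi(\alp)|^E\d\alp\right)^{1/E}\le\|\hat\phi_1\|_{L^E(\bT)}+\|\hat\phi_2\|_{L^E(\bT)}\ll_{h,E}N^{1-1/E},
\]
and raising to the $E$-th power yields the claimed bound. No genuine obstacle arises: the hypothesis $E>2T$ only needs to propagate from Proposition \ref{prop6.1}, and the contribution of $1_{[N]}$ is controlled without any further arithmetic input. The whole deduction is the standard bootstrapping used in the analogous integer-valued setting of \cite[Lemma 6.2]{CC}.
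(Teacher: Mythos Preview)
Your argument is correct and matches the paper's approach: the paper simply states that Proposition \ref{prop6.2} is ``easily bootstrapped'' from Proposition \ref{prop6.1} via the deduction of \cite[Lemma 6.2]{CC}, and your decomposition-plus-interpolation argument is precisely that standard bootstrapping. No issues to flag.
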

	
	To prove Proposition \ref{prop6.1}, we proceed in stages. We introduce the auxiliary function
	\[
	\mu: \bZ \to [0,\infty),
	\qquad
	\mu(n) = \frac{1}{(h'(b),W)}
	\sum_{\substack{
			b < x \le X \\
			x \equiv b \mmod{W \kap} \\
			h(x) - h(b) = n \lam(D)}}
	h'(x),
	\]
	noting that $\nu \le (\log X) \mu$ pointwise. We compute that
	\begin{align*}
		\| \mu \|_1 &= (h'(b),W)^{-1}
		\sum_{\substack{b < x \le X \\
				x \equiv b \mmod{W \kap}}} h'(x) \\
		&\ll (h'(b),W)^{-1}
		\sum_{y \le X/W \kap} 
		(W \kap y)^{d-1} \\
		&\ll \frac{X^d}{W \kap (h'(b),W)}
		= \frac{X^d}{\lam(D)}
		\ll N.
	\end{align*}
	
	We begin with an epsilon-slack restriction estimate for $\mu$.
	
	\begin{lemma} 
		[Epsilon-slack estimate]
		\label{lem6.3}
		Let $\psi: \bZ \to \bC$ with $|\psi| \le \mu$. Then
		\[
		\int_\bT |\hat \psi(\alp)|^{2T} \d \alp \ll_{h,\eps} N^{2T-1+\eps}.
		\]
	\end{lemma}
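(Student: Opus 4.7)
The plan is to translate the $L^{2T}$-norm of $\hat\psi$ into a weighted count of solutions to the Waring-type equation $\sum_i h(x_i) = \sum_i h(x_i')$, then invoke the defining property of $T=T(d)$ applied to $h$ itself. First, orthogonality on $\bT$ gives
\[
\int_\bT |\hat\psi(\alpha)|^{2T}\,\d\alpha = \sum_{\substack{n_1,\ldots,n_T,n_1',\ldots,n_T' \in \bZ \\ \sum_i n_i = \sum_i n_i'}} \prod_{i=1}^T \psi(n_i)\overline{\psi(n_i')}.
\]
Using $|\psi|\le\mu$ pointwise and unfolding the definition of $\mu$ (each $n_i$ in the support of $\mu$ corresponds to an $x_i \in (b,X]$ with $x_i \equiv b \mmod{W\kappa}$ and $n_i \lam(D) = h(x_i) - h(b)$), this is bounded by
\[
\frac{1}{(h'(b),W)^{2T}} \sum_{\substack{x_i, x_i' \in (b,X] \\ x_i, x_i' \equiv b \mmod{W\kappa} \\ \sum_i h(x_i) = \sum_i h(x_i')}} \prod_{i=1}^T h'(x_i)h'(x_i').
\]

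Next I would apply the crude bound $h'(x) \ll_h X^{d-1}$ to extract the $h'$-weights, and drop the congruence constraint (which only increases the count). The remaining quantity is the unweighted number of $(\bx,\bx') \in [X]^{2T}$ satisfying $\sum_i h(x_i) = \sum_i h(x_i')$, which by the very definition of $T(d)$ is $O_{h,\eps}(X^{2T-d+\eps})$. Combining the pieces and substituting $N \asymp_h X^d/(W\kappa(h'(b),W))$, the desired inequality reduces to showing that $(W\kappa)^{2T-1+\eps} \ll X^{\eps(d-1)}$. Since $\kappa \le \lam(D)/W \le W^{2d-1}$, the left-hand side is bounded by a fixed power of $W$; but $X$ is assumed sufficiently large in terms of $w$, hence in terms of $W$, so this holds for any fixed $\eps>0$.

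The main obstacle, and the reason for the $\eps$-slack in the statement, is ensuring that the implicit constant in the moment estimate defining $T(d)$ does not depend on the unbounded parameters $b$, $W$, $\kappa$. This is why I would \emph{not} apply $T(d)$ to the natural change-of-variable polynomial $g(y) := h(b+W\kappa y)$, whose coefficients blow up with $W\kappa$ and would destroy uniformity; instead, $T(d)$ is applied directly to $h$, at the cost of overcounting solutions with an unused progression constraint, and the $X^\eps$-slack in the definition of $T(d)$ is precisely what comfortably absorbs the resulting polynomial-in-$W\kappa$ loss. Everything else in the argument is bookkeeping.
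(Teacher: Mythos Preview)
Your proof is correct and follows essentially the same route as the paper: orthogonality, pointwise domination by $\mu$, unfolding $\mu$ to pass from $n_i$-variables to $x_i$-variables, the crude bound $h'(x)\ll_h X^{d-1}$, dropping the congruence constraint, and then invoking the defining moment estimate for $T(d)$ applied to $h$. The paper compresses your last paragraph of bookkeeping into the single line $X^{2T(d-1)+2T-d+\eps}\ll N^{2T-1+2\eps}$, implicitly using that $\lam(D)$ is bounded by a fixed power of $W$ and that $X$ is large in terms of $w$; your explicit reduction to $(W\kappa)^{2T-1+\eps}\ll X^{(d-1)\eps}$ makes this step transparent, and your remark about why one applies $T(d)$ to $h$ rather than to $g(y)=h(b+W\kappa y)$ is a helpful clarification that the paper leaves implicit.
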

	
	\begin{proof} Observe that 
		$\| \psi \|_\infty \le 
		\| \mu \|_\infty \ll 
		X^{d-1}$. By orthogonality and the triangle inequality,
		\begin{align*}
			\int_\bT |\hat \psi(\alp)|^{2T} \d \alp
			&= \sum_{n_1 + \cdots + n_T = n_{T+1} + \cdots + n_{2T}} \psi(n_1) \cdots \psi(n_T) \overline{
				\psi(n_{T+1}) \cdots 
				\psi(n_{2T})} \\
			&\ll (X^{d-1})^{2T} 
			\# \{ \bx \in [X]^{2T}: h(x_1) + \cdots + h(x_T) = h(x_{T+1}) + \cdots + h(x_{2T}) \} \\ 
			&\ll X^{2T(d-1) + 2T - d + \eps} \ll N^{2T-1+2\eps}.
		\end{align*}
	\end{proof}
	
	Our next goal is to largely remove $\eps$ from the exponent in this restriction estimate for $\mu$, obtaining a log-slack estimate by passing to a slightly higher moment. To accomplish this, we require some bounds on
	\[
	\hat \mu(\tet) = 
	\frac1{(h'(b),W)}
	\sum_{\substack{
			b < x \le X \\
			x \equiv b \mmod{W \kap}}} 
	h'(x) 
	e\left(\tet \frac{h(x)-h(b)}{\lam(D)} \right).
	\]
	The triangle inequality and partial summation yield
	\begin{equation*}
		\label{TrianglePartial}
		\hat \mu(\tet)
		\ll \frac{X^{d-1}}{(h'(b),W)} \left( 1 +
		\max_{X^{1/2} \le P \le X}
		|g(\tet;P)| \right),
	\end{equation*}
	where
	\[
	g(\tet;P) = \frac1{(h'(b),W)}
	\sum_{\substack{
			b < x \le X + b \\
			x \equiv b \mmod{W \kap}}}
	e\left(\tet \frac{h(x)-h(b)}{\lam(D)} \right).
	\]
	Writing $x = W\kap y + b$ gives
	\[
	\sum_{\substack{
			x \le X \\
			x \equiv b \mmod{W \kap}}} e\left(\tet \frac{h(x)-h(b)}{\lam(D)} \right)
	= \sum_{y \le X/(W\kap)}
	e\left(
	\frac{c_d (W\kap)^d}{\lam(D)} \tet f_d(y)
	\right),
	\]
	where $c_d$ is the leading coefficient of $h$ and $f_d(y) \in \bZ[y]$ is monic of degree $d$.
	
	Suppose $X^{1/2} \le P \le X$.
	The exponential sum 
	\[
	g_1(\alp; P) := 
	\sum_{y \le P/(W\kap)}
	e(\alp f_d(y))
	\]
	can be treated using Roger Baker's estimates \cite{Bak1986}. We apply the formulation \cite[Lemma 2.3]{Chow2016}, noting from its proof that the quantity $\sig(d)$ therein can be replaced by $2^{1-d}$. This delivers the following conclusion.
	
	\begin{lemma} \label{lem6.4}
		If $|g_1(\alp; P)| > (P/(W\kap))^{1 - 2^{1-d} + \eps}$ then there exist coprime $r \in \bN$ and $b \in \bZ$ such that
		\[
		g_1(\alp; P) \ll_{d,\eps} r^{\eps-1/d}
		P(W\kap)^{-1}
		(1 + (P/(W\kap))^d 
		|\alp - b/r|)^{-1/d}.
		\]
	\end{lemma}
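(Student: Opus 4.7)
The plan is to invoke a sharpened Weyl-type estimate for exponential sums over monic polynomials of degree $d$, precisely as the authors indicate by citing Baker \cite{Bak1986} and Chow \cite[Lemma 2.3]{Chow2016}. Writing $Y := P/(W\kappa)$ for brevity, the sum $g_1(\alpha;P) = \sum_{y \le Y} e(\alpha f_d(y))$ is a classical Weyl sum for the monic polynomial $f_d$. The strategy splits naturally into a Weyl-differencing step establishing the existence of a good rational approximation, followed by a major-arc asymptotic that delivers the quantitative bound.

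For the first step, I would apply Baker's refinement of Weyl's inequality in contrapositive form: if $|g_1(\alpha;P)| > Y^{1-2^{1-d}+\eps}$, then there must exist coprime $r \in \bN$ and $b \in \bZ$ with $r$ small and $|r\alpha - b|$ small, with the precise orders controlled by the excess of the sum over the Weyl bound. The improvement from the classical $2^{2-d}$ to $2^{1-d}$ noted by the authors arises from Baker's efficient accounting in the $(d-1)$-fold differencing process, which reduces the problem to bounding a linear exponential sum in the leading coefficient of $f_d$.

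For the second step, with such an approximation fixed and $\beta := \alpha - b/r$, I would perform the standard major-arc decomposition by writing $y = rn + m$ with $0 \le m < r$. The key congruence $f_d(rn+m) \equiv f_d(m) \pmod r$ allows factorisation as a complete exponential sum in $m$ times a smooth sum in $n$, up to an error of $O(r)$. The complete sum is bounded by $C(d)^{\omega(r)} r^{1-1/d}$ via Lemma \ref{lem2.2} (the monicity of $f_d$ ensures the leading coefficient is coprime to $r$), which absorbs into $r^{\eps - 1/d}$. The remaining smooth sum is approximated by $\int_0^{Y/r} e(\beta f_d(rt+m))\,{\rm d}t$, and standard oscillatory integral estimates (via the substitution $u = f_d(rt+m)$, valid on the range where $f_d$ is monotone) yield the bound $(Y/r)(1 + Y^d|\beta|)^{-1/d}$.

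The main obstacle is the precise bookkeeping needed to extract the sharp exponent $2^{1-d}$; this is exactly what Baker's refinement delivers and is the content of the cited \cite[Lemma 2.3]{Chow2016}. Combining the two steps, and absorbing the trivial factors of $r$ and $\varphi(r)$ into the $r^{\eps-1/d}$ term using $\varphi(r) \le r \le r^{1+\eps}$, then yields the claimed estimate, so the proof reduces to a careful invocation of the existing result with the exponent replacement the authors have flagged.
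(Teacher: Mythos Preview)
Your proposal is correct and takes essentially the same approach as the paper: both simply invoke \cite[Lemma 2.3]{Chow2016} (Baker's Weyl-type estimate) with the observation that the exponent $\sigma(d)$ there may be sharpened to $2^{1-d}$, and the paper's ``proof'' is nothing more than the sentence preceding the lemma statement. Your two-step sketch merely unpacks the mechanism inside that cited result; note only that the ``factorisation'' in your second step is slightly loose (the inner sum over $n$ still depends on $m$, so one needs the standard major-arc asymptotic $g_1 \approx r^{-1} S_r(b) I(\beta)$ rather than a literal product), but this is immaterial since you ultimately defer to the existing lemma just as the paper does.
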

	
	\begin{lemma} 
		\label{lem6.5}
		Let 
		\[
		\fn = \{ \tet \in \bT:
		|\hat \mu(\tet)| 
		\le X^{d-2^{-d}} \}.
		\]
		If $\tet \in \bT \setminus \fn$, then there exist coprime $q \in \bN$ and $a \in \bZ$ such that
		\[
		\hat \mu(\tet) \ll_{d,\eps}
		N (\log X) q^{\eps-1/d} 
		(1 + N|\tet - a/q|)^{-1/d}.
		\]
	\end{lemma}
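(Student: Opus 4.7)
The plan is to invoke the Weyl-type bound Lemma 6.4 to extract a rational approximation to a dilate of $\theta$, and then translate this back to an approximation of $\theta$ itself at the correct scale.

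Assume $\theta \in \bT \setminus \fn$, so $|\hat\mu(\theta)| > X^{d-2^{-d}}$. The partial-summation bound recorded immediately before the lemma statement forces the existence of some $P_0 \in [X^{1/2}, X]$ with $|g(\theta; P_0)| \gg X^{1-2^{-d}}$, where constants may depend on the fixed parameter $w$. Under the substitution $x = W\kappa y + b$ this becomes $|g_1(\alpha; P_0)| \gg X^{1-2^{-d}}$, where $\alpha = \ell \theta$ with
\[
\ell := \frac{c_d(W\kappa)^d}{\lambda(D)} = \frac{c_d(W\kappa)^{d-1}}{(h'(b),W)}.
\]
One checks that $\ell \in \bN$ because $(h'(b),W) \mid W \mid (W\kappa)^{d-1}$ for $d \ge 2$. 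Since $2^{-d} < 2^{1-d}$, for sufficiently small $\eps > 0$ and large $X$ the bound $X^{1-2^{-d}}$ exceeds the threshold $(P_0/(W\kappa))^{1-2^{1-d}+\eps}$ appearing in Lemma 6.4, which therefore yields coprime $r \in \bN$, $b' \in \bZ$ satisfying
\[
g_1(\alpha; P_0) \ll_{d,\eps} r^{\eps - 1/d}(P_0/(W\kappa))\bigl(1 + (P_0/(W\kappa))^d|\alpha - b'/r|\bigr)^{-1/d}.
\]

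Now write $a/q = b'/(\ell r)$ in lowest terms; then $(a,q) = 1$, $r \le q \le \ell r$, $|\theta - a/q| = |\alpha - b'/r|/\ell$, and $\ell(P_0/(W\kappa))^d = c_d P_0^d/\lambda(D) \asymp_h (P_0/X)^d N$. Substituting these into the partial-summation bound, and using $X^{d-1}(P_0/(W\kappa))/(h'(b),W)^2 \ll N(P_0/X)$, we arrive at
\[
|\hat\mu(\theta)| \ll N (P_0/X) \, r^{\eps - 1/d}\bigl(1 + (P_0/X)^d N|\theta - a/q|\bigr)^{-1/d}.
\]
The elementary inequality $t(1 + t^d u)^{-1/d} \le (1 + u)^{-1/d}$ for $t \in (0,1]$ (equivalent to $t^d \le 1$) absorbs the prefactor $P_0/X \le 1$ into the decay term, while the bound $r \le q \le \ell r$ with $\ell$ bounded in $X$ gives $r^{\eps-1/d} \ll q^{\eps-1/d}$. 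The $\log X$ factor in the statement provides a safety margin absorbing the `$+1$' term in the partial-summation bound together with any $w$-dependent implicit constants.

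The main subtlety is the bookkeeping through the change of variables: verifying the integrality of $\ell$, coprimality of $(a,q)$, the relation $q \asymp r$, and tracking how Lemma 6.4's natural scale $(P_0/(W\kappa))^d$ converts into the target scale $N$. The elementary inequality above is what allows the argument to close at the maximising $P_0$---which could in principle be strictly smaller than $X$---without requiring a separate major-arc estimate at $P = X$.
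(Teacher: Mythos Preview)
Your proof is correct and follows essentially the same route as the paper's: partial summation reduces to bounding $g_1$, Lemma 6.4 provides the rational approximation to $\ell\theta$, and the conversion back to $\theta$ proceeds via $a/q = b'/(\ell r)$ in lowest terms, with the $\log X$ factor absorbing the $w$-dependent ratio $\ell^{1/d-\eps}$ between $r^{\eps-1/d}$ and $q^{\eps-1/d}$. The paper makes the same scale change from $P$ to $X$ using the same inequality $t(1+t^d u)^{-1/d}\le (1+u)^{-1/d}$, only it applies it at the level of the $g_1$ bound rather than after converting to $\hat\mu$; your explicit statement of that inequality is a welcome clarification.
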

	
	\begin{proof} Let
		$P \in [X^{1/2}, X]$ maximise
		\[
		\left|g_1\left(
		\frac{c_d (W\kap)^d}{\lam(D)} \tet; P \right)\right|.
		\]
		By \eqref{TrianglePartial},
		\[
		\left|g_1\left(
		\frac{c_d (W\kap)^d}{\lam(D)} \tet; P \right)\right| > (P/(W\kap))^{1 - 2^{1-d} + \eps}.
		\]
		By Lemma \ref{lem6.4}, there exist coprime $r \in \bN$ and $b \in \bZ$ such that
		\begin{align*}
			g_1\left(
			\frac{c_d (W\kap)^d}{\lam(D)} \tet; P \right)
			&\ll r^{\eps-1/d}
			P(W\kap)^{-1}
			\left(1 + (P/(W\kap))^d 
			\left|\frac{c_d (W\kap)^d}{\lam(D)} \tet - b/r \right| \right)^{-1/d} \\
			&\ll r^{\eps-1/d} 
			X(W\kap)^{-1}
			\left(1 + (X/(W\kap))^d 
			\left|\frac{c_d (W\kap)^d}{\lam(D)} \tet - b/r \right| \right)^{-1/d}.
		\end{align*}
		With
		\[
		a = \frac{b}
		{(b, c_d (W\kap)^d/\lam(D))},
		\qquad
		q = \frac{rc_d (W\kap)^d/\lam(D)}
		{(b, c_d (W\kap)^d/\lam(D))},
		\]
		we now have
		\begin{align*}
			g_1\left(
			\frac{c_d (W\kap)^d}{\lam(D)} \tet; P \right)
			&\ll r^{\eps-1/d} 
			\frac{X}{W\kap}
			\left(1 + \frac{X^d}{\lam(D)} |\tet-a/q| \right)^{-1/d} \\
			&\ll q^{\eps-1/d} 
			\frac{X\log X}{W\kap}
			\left(1 + \frac{X^d}{\lam(D)} |\tet-a/q| \right)^{-1/d},
		\end{align*}
		since $X$ is large in terms of $w$. Finally, recall that
		\[
		N \asymp \frac{X^d}{\lam(D)}
		= \frac{X^d}{W\kap(h'(b),W)}
		\]
		and
		\[
		X^{d-2^{-d}} <
		|\hat{\mu}(\tet)| \ll
		X^{d-1} + \frac{X^{d-1}}
		{(h'(b),W)} g_1\left(
		\frac{c_d (W\kap)^d}{\lam(D)} \tet; P \right).
		\]
	\end{proof}
	
	By passing to a slightly higher moment, we are now able to obtain a log-slack analogue of Lemma \ref{lem6.3}.
	
	\begin{lemma} 
		[Log-slack estimate]
		\label{lem6.6}
		Let $v > 2T$ be real, and let $\psi: \bZ \to \bC$ with $|\psi| \le \mu$. Then
		\[
		\int_\bT |\hat \psi(\alp)|^v \d \alp \ll_{h,v} N^{v-1} (\log X)^v.
		\]
	\end{lemma}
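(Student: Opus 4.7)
The plan is an epsilon-removal argument: starting from the $\eps$-slack bound of Lemma \ref{lem6.3} together with the pointwise structural estimate on $\hat\mu$ from Lemma \ref{lem6.5}, I would upgrade to a log-slack bound at any $v > 2T$. After a layer-cake decomposition
\[
\int_\bT |\hat\psi(\alpha)|^v\, \d\alpha = v \int_0^\infty \lambda^{v-1} |E_\lambda|\, \d\lambda, \qquad E_\lambda := \{\alpha \in \bT : |\hat\psi(\alpha)| \ge \lambda\},
\]
I would split at a threshold $\lambda_0 := X^{d-2^{-d}}(\log X)^C$ for a suitable constant $C$. For $\lambda \le \lambda_0$, Chebyshev combined with Lemma \ref{lem6.3} yields $|E_\lambda| \ll N^{2T-1+\eta}/\lambda^{2T}$ for any fixed $\eta > 0$. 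Using $N \asymp X^d/\lam(D)$ with $\lam(D) \le W^{2d}$ bounded independently of $X$, the contribution from this regime is $\ll N^{v-1}(\log X)^{O(1)}$, provided $\eta$ is chosen small in terms of $v - 2T$ and $2^{-d}$.

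For $\lambda > \lambda_0$, I would translate the pointwise estimate on $\hat\mu$ into a bound on $|E_\lambda|$ via a $TT^*$/large-sieve argument. Take a maximal $1/N$-separated set $\{\alpha_j\}_{j=1}^R \subseteq E_\lambda$ (so $R \gtrsim N|E_\lambda|$), and unit phases $c_j$ with $c_j \hat\psi(\alpha_j) = |\hat\psi(\alpha_j)|$. Applying Cauchy--Schwarz via the majorisation $|\psi| \le \mu$ gives
\[
R\lambda \le \sum_n \mu(n) \Big|\sum_j c_j e(-\alpha_j n)\Big| \le \|\mu\|_1^{1/2} \Big(\sum_{j,k} c_j \bar c_k\, \hat\mu(\alpha_k - \alpha_j)\Big)^{1/2}.
\]
The diagonal $j = k$ contributes $R\|\mu\|_1 \sim RN$, whereas the off-diagonal pairs satisfy $\|\alpha_j - \alpha_k\| \ge 1/N$, and Lemma \ref{lem6.5} supplies a polylogarithmic-in-$X$ pointwise bound on $\hat\mu$ at each such point. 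A dyadic decomposition of the off-diagonal sum by the size of $|\hat\mu|$, followed by summation over Farey approximations, should yield $|E_\lambda| \ll N(\log X)^{O(1)}/\lambda^2$. Integrating against $\lambda^{v-1}$ over $\lambda \in [\lambda_0, \|\mu\|_1]$, and using $v > 2$, recovers $\ll N^{v-1}(\log X)^{O(1)}$.

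The main obstacle lies in the off-diagonal analysis of the $TT^*$ step: the Diophantine data $(a,q)$ attached to Lemma \ref{lem6.5} varies with the argument, so one must carefully bin the $R^2$ differences $\alpha_j - \alpha_k$ by their Farey type and balance the count of pairs in each bin against the decay $q^{\eta-1/d}(1+N\|\alpha_j - \alpha_k - a/q\|)^{-1/d}$ without losing factors of $N^\eta$. The cushion $v - 2T > 0$ is what ultimately absorbs any residual losses, both from the Chebyshev step in the small-$\lambda$ regime and from the Farey bookkeeping in the large-$\lambda$ regime.
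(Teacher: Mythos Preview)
Your approach is essentially the same as the paper's: both feed Lemma~\ref{lem6.3} (the $\eps$-slack moment bound) and Lemma~\ref{lem6.5} (the Farey-type pointwise bound on $\hat\mu$) into a Bourgain-style $\eps$-removal argument. The only difference is packaging: the paper invokes the general $\eps$-removal lemma \cite[Lemma~25]{Sal2020} as a black box (noting from its proof that it tolerates $\|\alp - a/q\|^{1/d}$ in place of $\|\alp - a/q\|$), and obtains
\[
\int_\bT \bigl(|\hat\psi(\alp)|/\log X\bigr)^v \d\alp \ll N^{v-1}
\]
directly. What you sketch --- layer-cake, Chebyshev below a threshold, and $TT^*$ with Farey binning above --- is precisely the mechanism inside that black box. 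The ``main obstacle'' you flag (controlling the off-diagonal sum $\sum_{j\ne k}|\hat\mu(\alp_k-\alp_j)|$ by partitioning pairs according to their Diophantine type) is exactly the content of the cited lemma, so your outline is sound; citing the lemma simply spares you that bookkeeping. One cosmetic point: your argument naturally yields $(\log X)^{O_v(1)}$ rather than $(\log X)^v$ specifically, but this is immaterial for the application to Proposition~\ref{prop6.1}.
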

	
	\begin{proof} 
		Observe from its proof that the general epsilon-removal lemma \cite[Lemma 25]{Sal2020} holds with
		$\| \alp - a/q \|^\kap$ in place of $\| \alp - a/q \|$, with the notation therein.
		Inserting Lemmas \ref{lem6.3} 
		and \ref{lem6.5} into this gives
		\[
		\int_\bT 
		(|\hat \psi(\alp)|
		/\log X)^v 
		\d \alp \ll N^{v-1}.
		\]
	\end{proof}
	
	\begin{proof} [Proof of Proposition \ref{prop6.1}]
		Let $v \in (2T, E)$. Applying Lemma \ref{lem6.6} to 
		$\psi = (\log X)^{-1} \phi$ gives the log-slack restriction estimate
		\[
		\int_\bT |\hat \phi(\alp)|^v \d \alp \ll_{h,v}
		N^{v-1} (\log X)^{2v}.
		\]
		We can choose $\sig_d$ to be large in terms of $E$ and $v$. Thus, in view of Corollaries \ref{cor5.3} and \ref{cor5.6}, the desired result follows from the general epsilon-removal lemma \cite[Lemma 25]{Sal2020}.
	\end{proof}
	
	\subsection{Restriction II}
	
	Define the auxiliary weight function $\nu_D: \bZ \to [0,\infty)$ by
	\[
	\nu_D(n) := 
	\frac{\varphi(D)}{\lam(D)}
	\sum_{\substack{p \le X \\
			p \equiv r_D \mmod D
			\\ h(p) = n \lam(D)}} 
	h'(p) \log p = 
	\frac{\varphi(D)}{\lam(D)}
	\sum_{\substack{z \le Z \\
			(Dz + r_D)\in\cP_X
			\\ h_D(z)=n}}
	h'(Dz + r_D) \log(Dz + r_D) .
	\]
	Observe that $\nu_D$ is supported on $h_D([Z])\subseteq [N]$. By the Siegel--Walfisz theorem, we have
	\begin{align*}
		\| \nu_D \|_1 
		&= \frac{\varphi(D)}{\lam(D)}
		\sum_{\substack{p \le X \\
				p \equiv r_D \mmod D}} 
		h'(p) \log p \\
		&\le \frac{\varphi(D)}{\lam(D)}
		\sum_{\substack{p \le X \\
				p \equiv r_D \mmod D}} h'(X) \log X
		\ll N.
	\end{align*}
	One can be more precise using partial summation, but we do not need to.
	
	In this subsection, we establish the following restriction estimate for $\nu_D$.
	
	\begin{prop}
		\label{prop6.7}
		Let $E > 2T$ be real, and let $\phi: \bZ \to \bC$ with $|\phi| \le \nu_D$. Then
		\[
		\int_\bT |\hat \phi(\alp)|^E \d \alp \ll_{h,E} N^{E-1}.
		\]
	\end{prop}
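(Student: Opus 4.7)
The proof will closely parallel that of Proposition \ref{prop6.1}, with $\nu_D$ in place of $\nu$ and with the auxiliary majorant
\[
\mu_D(n) := \frac{\varphi(D)}{\lam(D)}
\sum_{\substack{z \le Z \\ (Dz + r_D)\in\cP \\ h_D(z)=n}}
h'(Dz + r_D)
\]
in place of $\mu$. One verifies $\nu_D \le (\log X) \mu_D$ pointwise, $\lVert \mu_D \rVert_\infty \ll_h X^{d-1}\varphi(D)/\lam(D)$ (since for each $n$ the equation $h_D(z)=n$ has $O_d(1)$ solutions), and $\lVert \mu_D \rVert_1 \ll N$ by the Siegel--Walfisz theorem.

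The plan is to carry out the same three-step scheme. First, an epsilon-slack estimate: expanding $\int_\bT |\hat\psi(\alp)|^{2T}\d\alp$ by orthogonality for $|\psi| \le \mu_D$, dominating $\psi$ by $\mu_D$, and changing variables $x_i = r_D + Dz_i$, we reduce to counting solutions of $h(x_1) + \cdots + h(x_T) = h(x_{T+1}) + \cdots + h(x_{2T})$ with $\bx \in [X]^{2T}$, which by the defining property of $T = T(d)$ is $O_{h,\eps}(X^{2T-d+\eps})$. Combined with $\lVert \mu_D \rVert_\infty$ this yields $\int_\bT |\hat\psi|^{2T} \ll_{h,\eps} N^{2T-1+\eps}$, the analogue of Lemma \ref{lem6.3}. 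Second, a minor arc pointwise bound on $\hat\mu_D$: partial summation on the prime indicator, together with the change of variables $x = Dz + r_D$, reduces matters (after a dyadic split) to estimating sums of the form $\sum_{z \le P/D} e(\alp f_d(z))$ for a monic $f_d \in \bZ[z]$ of degree $d$. Roger Baker's estimate (as formulated in Lemma \ref{lem6.4}) then furnishes coprime $q,a$ with
\[
\hat\mu_D(\tet) \ll_{h,\eps} N(\log X)\, q^{\eps - 1/d}(1 + N|\tet - a/q|)^{-1/d}
\]
whenever $|\hat\mu_D(\tet)| > X^{d - 2^{-d}}$, exactly as in Lemma \ref{lem6.5}. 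Feeding these into the general epsilon-removal machinery of \cite[Lemma 25]{Sal2020} converts the epsilon-slack estimate into the log-slack estimate $\int_\bT |\hat\psi|^v \ll_{h,v} N^{v-1}(\log X)^v$ for all real $v > 2T$ and all $|\psi| \le \mu_D$.

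To remove the remaining logarithmic loss, one establishes minor and major arc Fourier bounds for $\hat\nu_D$ themselves, in analogy with Corollaries \ref{cor5.3} and \ref{cor5.6}. The required input is provided by Lemmas \ref{lem4.2} and \ref{lem4.3} applied with $\tet = \alp$, $f(y) = h(y)$, $G = h'$, $Q = \lam(D)$, $m = D$, $b = r_D$, and $P = X$; here the congruence condition $f(b + mx)/Q \in \bZ[x]$ demanded by \eqref{eqn4.3} is precisely the statement $h_D(z) \in \bZ[z]$, which holds by construction. An analogue of Lemma \ref{lem5.5} is required to estimate the resulting complete exponential sum. The main obstacle, and the only real point of divergence from the $\nu$-case, is this arithmetic estimate: whereas for $\nu$ the decisive arithmetic input was $(h'(b),W) \mid \sqrt W$, for $\nu_D$ the corresponding uniformity must be extracted from Lucier's bound $\gcd(h_D - h_D(0)) \ll_h 1$, so that Lemma \ref{lem2.2} delivers a saving of $q^{\eps - 1/d}$ with an implied constant independent of $D$. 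Once this is in place, the final step follows by bounding $|\phi| \le \nu_D \le (\log X)\mu_D$, passing from the log-slack moment bound to one for $\phi$, and invoking \cite[Lemma 25]{Sal2020} once more with sufficiently large $\sig_d$ to absorb the loss, yielding $\int_\bT |\hat\phi|^E \ll_{h,E} N^{E-1}$ as claimed.
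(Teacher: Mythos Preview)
Your overall three-step scheme (epsilon-slack, then log-slack via epsilon-removal, then sharp via minor/major arc bounds for $\hat\nu_D$ and a second epsilon-removal) is sound, and your identification of Lucier's bound $\gcd(h_D - h_D(0)) \ll_h 1$ as the arithmetic input needed for the analogue of Lemma~\ref{lem5.5} is exactly right. There is, however, a genuine slip in your definition of $\mu_D$: you retain the primality constraint $(Dz+r_D)\in\cP$. This breaks the Baker step. Lemma~\ref{lem6.4} is a Weyl-sum estimate over a full interval of consecutive integers, not over primes, and partial summation can only strip smooth weights like $h'$; it cannot remove an arithmetic condition such as primality. So with the prime condition in place you cannot reduce $\hat\mu_D(\tet)$ to sums of the shape $\sum_{z\le P/D} e(\alp f_d(z))$. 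Note that in Restriction~I the majorant $\mu$ already drops the prime condition from $\nu$; the parallel choice here is to drop it from your $\mu_D$ as well, after which your argument goes through as written.

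The paper takes a more economical route: it uses the still simpler unweighted majorant
\[
\mu_D(n) = \frac{ND}{X}\sum_{\substack{z\le Z\\ h_D(z)=n}} 1,
\]
for which a \emph{sharp} restriction estimate $\int_\bT |\hat\psi|^E \d\alp \ll_{h,E} N^{E-1}$ is already available as a special case of \cite[Lemma~6.3]{CC}. This replaces the whole epsilon-slack/Baker/log-slack chain for $\mu_D$ by a single citation, directly yielding the log-slack bound~\eqref{eqn6.1} for $\nu_D$. The remaining steps---the minor and major arc bounds~\eqref{eqn6.2} and~\eqref{eqn6.3} for $\hat\nu_D$ via Lemmas~\ref{lem4.2} and~\ref{lem4.3}, the estimate $S_D(q,a)\ll_{h,\eps} q^{\eps-1/d}\varphi(q)$ from Lemma~\ref{lem2.2}, and the final application of \cite[Lemma~25]{Sal2020}---are exactly as you describe.
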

	
	Our approach is similar to that of Proposition \ref{prop6.1}, so we will not repeat all of the details. We introduce the auxiliary function
	\[
	\mu_D: \bZ \to [0,\infty),
	\qquad
	\mu_D(n) =
	\frac{ND}{X}
	\sum_{\substack{z \le Z
			\\ h_D(z) = n}} 1,
	\]
	noting that $\nu_D \ll (\log X) \mu_D$ pointwise. As a special case of \cite[Lemma 6.3]{CC}, we have the following sharp restriction estimate for $\mu_D$.
	
	\begin{lemma} 
		Let $E > 2T$, and let $\psi: \bZ \to \bC$ with $|\psi| \le \mu_D$. Then
		\[
		\int_\bT |\hat \psi(\alp)|^E 
		\d \alp \ll_{h,E} N^{E-1}.
		\]
	\end{lemma}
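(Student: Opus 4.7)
The plan is to mirror, in the simpler integer-counting setting, the epsilon-removal strategy used for $\mu$ in the preceding subsection. The starting ingredients are the pointwise bounds $\|\mu_D\|_\infty \ll_h ND/X \asymp N/Z$ (since each $n \in [N]$ has at most $d = \deg h_D$ preimages under $h_D$) and $\|\mu_D\|_1 \asymp N$ derived in the text.

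The first step is an $\eps$-slack $L^{2T}$ moment estimate. By orthogonality, for any $\psi$ with $|\psi| \le \mu_D$,
\[
\int_\bT |\hat{\psi}(\alpha)|^{2T} \, \d \alpha \le \|\mu_D\|_\infty^{2T} \cdot \#\{\bz \in [Z]^{2T} : h_D(z_1) + \cdots + h_D(z_T) = h_D(z_{T+1}) + \cdots + h_D(z_{2T})\}.
\]
Since $h_D$ has degree $d$, the defining property of $T = T(d)$ from \S\ref{sec2} bounds the solution count by $O_{h,\eps}(Z^{2T-d+\eps})$, and a direct calculation using $N \asymp_h (D^d/\lam(D)) Z^d$ yields $\int_\bT |\hat{\psi}(\alpha)|^{2T}\,\d \alpha \ll_{h,\eps} N^{2T-1+\eps}$.

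The second step extracts Fourier decay for $\hat{\mu}_D$ and then applies epsilon-removal. Writing $\hat{\mu}_D(\alpha) \asymp (ND/X)\sum_{z \le Z} e(\alpha h_D(z))$ and applying Baker's Weyl-type estimates \cite{Bak1986} (in the formulation \cite[Lemma 2.3]{Chow2016}) to $h_D$ gives, for each $\alpha$ off a suitable minor-arc set, a rational approximation $a/q$ with
\[
\hat{\mu}_D(\alpha) \ll_{h,\eps} N q^{\eps - 1/d}\bigl(1 + N\|\alpha - a/q\|\bigr)^{-1/d},
\]
and $|\hat{\mu}_D(\alpha)| \le N^{1-c}$ otherwise for some $c = c(d) > 0$. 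The critical input is $\gcd(h_D - h_D(0)) \ll_h 1$ from \cite[Lemma 28]{Luc2006}, which guarantees that Baker's bound is of good quality uniformly in the parameter $D$ despite the leading coefficient $\ell_h D^d/\lam(D)$ of $h_D$ varying with $D$. Passing to a slightly higher moment $v > 2T$ and combining the $\eps$-slack estimate with this Fourier decay, the general epsilon-removal lemma \cite[Lemma 25]{Sal2020} yields first a log-slack estimate $\int_\bT |\hat\psi(\alpha)|^v\,\d\alpha \ll_{h,v} N^{v-1}(\log N)^{O(1)}$, and a second application of the same lemma (now taking the log-slack bound as input together with the same Fourier decay) promotes this to $\int_\bT |\hat\psi(\alpha)|^E\,\d\alpha \ll_{h,E} N^{E-1}$ for any $E > 2T$. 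The main obstacle is ensuring Baker's minor-arc estimate for the family $\{h_D\}_{D \ge 1}$ is robust in $D$, which is precisely what the Lucier content bound delivers.
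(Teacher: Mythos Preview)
The paper's own proof is a single citation: the lemma is stated as ``a special case of \cite[Lemma 6.3]{CC}''. Your proposal instead re-derives the estimate from scratch by mirroring the $\mu$-argument (Lemmas~6.3--6.6 and Proposition~6.1) in the present paper, which is presumably the route taken in \cite{CC} as well. So the approach is the same in spirit, just unpacked rather than imported.

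One point deserves care. In your $\eps$-slack step you write that the $h_D$-mean value is $O_{h,\eps}(Z^{2T-d+\eps})$, with constant depending only on the original $h$. The definition of $T(d)$ in \S\ref{sec2} literally only furnishes $O_{h_D,\eps}(Z^{2T-d+\eps})$, and the family $\{h_D\}_{D \ge 1}$ has leading coefficient $\ell_h D^d/\lam(D)$, which can be as large as $\ell_h D^{d-1}$. You correctly invoke Lucier's content bound $\gcd(h_D - h_D(0)) \ll_h 1$ for the Fourier-decay step, and in fact the same bound is what underpins uniformity in the mean-value step too (this is how \cite[Lemma~6.3]{CC} handles it); but as written your first step appeals only to the bare definition of $T(d)$, which does not by itself give the uniformity. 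In the present paper this is harmless anyway, since $D = W^2$ with $w = O_{\delta,h,r,L_1,L_2}(1)$ fixed before $X \to \infty$, so any polynomial-in-$D$ loss is absorbed into $N^\eps$ --- but if you want the clean $\ll_{h,E}$ statement uniformly in $D$, you should say explicitly that the moment bounds defining $T(d)$ (via Hua and \cite[Corollary~14.7]{Woo2019}) are uniform over degree-$d$ integer polynomials of bounded content.
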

	
	The upshot is that if $v > 2T$ and $|\phi| \le \nu_D$ then
	\begin{equation} \label{eqn6.1}
		\int_\bT |\hat \phi(\alp)|^v 
		\d \alp
		\ll_v N^{v-1} (\log X)^v.
	\end{equation}
	To apply the general epsilon-removal lemma, we require major and minor arc bounds for
	\[
	\hat \nu_D(\alp)
	= \frac{\varphi(D)}{\lam(D)}
	\sum_{\substack{p \le X \\
			p \equiv r_D \mmod D}}
	h'(p) \log p \cdot
	e(\alp h(p) / \lam(D)).
	\]
	On the minor arcs, we infer the following analogue of Corollary \ref{cor5.3}, by essentially the same argument:
	\begin{equation}
		\label{eqn6.2}
		\hat \nu_D(\alp) \ll 
		X^d (\log X)^{-\sig_d} \qquad (\alp \in \fm).
	\end{equation}
	
	On the major arcs, we have \eqref{eqn5.2}, for some $q,a \in \bZ$. Define
	\[
	S_D(q,a) = \sum_{\substack{t \mmod{Dq} \\ (t,q) = 1 \\
			t \equiv r_D \mmod D}}
	e_q(ah(t)/\lam(D)).
	\]
	We infer the following analogue of Corollary \ref{cor5.4}, by essentially the same proof.
	
	\begin{lemma} Let $c$ be a small, positive constant, small in terms of $C_d$. Suppose 
		$(\alp, q, a) \in 
		\bR \times \bN \times \bZ$ 
		with \eqref{eqn5.2}, and put $\bet = \alp - a/q$. Then
		\[
		\hat \nu_D(\alp) =
		\frac{\varphi(D)}
		{\varphi(Dq)} 
		S_D(q,a) I(\bet)
		+ O(Ne^{-c \sqrt{\log X}}).
		\]
	\end{lemma}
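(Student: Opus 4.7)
The plan is to invoke Lemma~\ref{lem4.3} with the substitutions $\theta=\alpha$, $f(y)=h(y)$, $G=h'$, $Q=\lam(D)$, $m=D$, $b=r_D$, and $P=X$. The key hypothesis to check is \eqref{eqn4.3}, namely that $(h(r_D+Dx)-h(r_D))/\lam(D)\in\bZ[x]$. This holds because Lucier's construction (recalled in Section~\ref{subsec2.4}) guarantees that $h_D(x)=h(r_D+Dx)/\lam(D)$ is already an integer polynomial; in particular, evaluating at $x=0$ forces $h(r_D)/\lam(D)\in\bZ$, so the difference lies in $\bZ[x]$ as well. The coprimality condition $(r_D,D)=1$ is inherent in the definition of $r_D$, since $h$ is intersective of the second kind.

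With these choices, Lemma~\ref{lem4.3} yields
\[
\sum_{\substack{p\le X \\ p\equiv r_D\mmod D}} e_{\lam(D)}(\alpha h(p))\, h'(p)\log p = \frac{S(q,a;D)\, I_{f,G}(\beta)}{\varphi(Dq)} + O_h\bigl(Xe^{-c\sqrt{\log X}}\cdot\|h'\|_{\cS[2,X]}\bigr).
\]
Writing $t=r_D+Du$ for residues $t\equiv r_D\pmod D$ gives $h(t)=\lam(D)h_D(u)$, whence
\[
e_{\lam(D)q}(ah(t))=e_q(ah_D(u))=e_q\!\left(\frac{a\,h(t)}{\lam(D)}\right),
\]
so that $S(q,a;D)$ from Lemma~\ref{lem4.3} coincides with $S_D(q,a)$. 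Performing the substitution $y=h(t)/\lam(D)$ in the integral produces $I_{f,G}(\beta)=\lam(D) I(\beta)+O_h(\lam(D))$, the $O_h(\lam(D))$ term accounting for the small discrepancy between $[h(2)/\lam(D),N]$ and $[0,N]$.

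Multiplying the entire identity by the prefactor $\varphi(D)/\lam(D)$ recovers $\hat{\nu}_D(\alpha)$ on the left and produces the claimed main term $\varphi(D)S_D(q,a)I(\beta)/\varphi(Dq)$ on the right. Using $\|h'\|_{\cS[2,X]}\ll_h X^{d-1}$ together with $X^d/\lam(D)=h(X)/\lam(D)\asymp_h N$, the error $\varphi(D)\cdot O_h\bigl(X^d\lam(D)^{-1} e^{-c\sqrt{\log X}}\bigr)$ collapses to $O(Ne^{-c\sqrt{\log X}})$ after a harmless adjustment of $c$, since $\varphi(D)$ is bounded in terms of $w$. There is no substantive obstacle here: the only content beyond the proof of Corollary~\ref{cor5.4} is the observation that Lucier's polynomial $h_D$ ensures hypothesis \eqref{eqn4.3}.
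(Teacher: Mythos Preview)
Your proof is correct and follows exactly the approach the paper has in mind: the paper merely states that this lemma is ``the following analogue of Corollary~\ref{cor5.4}, by essentially the same proof,'' and your substitutions $(\theta,f,G,Q,m,b,P)=(\alpha,h,h',\lam(D),D,r_D,X)$ into Lemma~\ref{lem4.3} are precisely the intended ones. Your verification of hypothesis~\eqref{eqn4.3} via the integrality of $h_D$, the identification of $S(q,a;D)$ with $S_D(q,a)$, and the absorption of the $\varphi(D)$-dependent error are all accurate.
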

	
	It follows from 
	Lemma \ref{lem2.2} that
	\[
	S_D(q,a) \ll_{h,\eps} q^{\eps-1/d}
	\varphi(q).
	\]
	Thus, by incorporating \eqref{eqn5.6}, we arrive at the following variant of Corollary \ref{cor5.6}:
	\begin{equation}
		\label{eqn6.3}
		\hat \nu_D(\alp)
		\ll_{h,\eps} q^{\eps - 1/d}
		\min \{ N, \| \alp - a/q \|^{-1} \} 
		+ O(Ne^{-c \sqrt{\log X}}).
	\end{equation}
	Equipped with the bounds \eqref{eqn6.1}, \eqref{eqn6.2}, and \eqref{eqn6.3}, Proposition \ref{prop6.7} now follows from the general epsilon-removal lemma \cite[Lemma 25]{Sal2020}.
	
	\section{The transference principle}\label{sec7}
	
	In this section, we are finally ready to use transference to deduce Theorem \ref{thm2.5} from Theorem \ref{thm2.7}.
	We start with some notation and a preparatory lemma.
	
	For finitely-supported $f_1, \ldots, f_s, g_1, \ldots, g_t: \bZ \to \bR$, define
	\[
	\Phi(f_1,\ldots,f_s;
	g_1, \ldots, g_t) = 
	\sum_{L_1(\bn) = L_2(\mathbf{m})}
	f_1(n_1) \cdots f_s(n_s)
	g_1(m_1) \cdots g_t(m_t).
	\]
	We frequently make use of the abbreviations
	\[
	\Phi(f_1,\ldots,f_s; g)
	= \Phi(f_1,\ldots,f_s; g, \ldots, g),
	\qquad
	\Phi(f; g) =
	\Phi(f,\ldots,f; g, \ldots, g).
	\]
	Given finite sets of integers $A$ and $B$, we also write
	$\Phi(A;g) = \Phi(1_A; g)$, and similarly for the expressions $\Phi(f;B)$ and $\Phi(A;B)$.
	
	\begin{lemma}[Fourier control]
		\label{lem7.1}
		Let $f_1, \ldots, f_s, g: \bZ \to \bR$ be finitely supported. If
		\[
		|f_j| \le \nu + 1_{[N]}
		\quad (1 \le j \le s),
		\qquad
		|g| \le \nu_D,
		\]
		then
		\[
		\Phi(f_1,\ldots,f_s; g)
		\ll N^{s+t-1} \prod_{j \le s}
		(\| \hat f_j \|_\infty 
		/ N)^{1/(2s+2t)}.
		\]
	\end{lemma}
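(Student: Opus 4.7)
The plan is to express $\Phi$ as an integral over $\bT$ via Fourier orthogonality, then combine H\"older's inequality with the restriction estimates of Propositions~\ref{prop6.2} and \ref{prop6.7}. Writing $L_1(\bn) = \sum_{i=1}^s c_i n_i$ and $L_2(\mathbf{m}) = \sum_{j=1}^t d_j m_j$ with all $c_i, d_j \in \bZ \setminus \{0\}$ (by non-degeneracy), the orthogonality identity $1_{k=0} = \int_\bT e(k\alpha)\,\d\alpha$ yields
\[
\Phi(f_1,\ldots,f_s; g) = \int_\bT \prod_{i=1}^s \hat f_i(c_i\alpha) \prod_{j=1}^t \hat g(-d_j\alpha)\,\d\alpha,
\]
so by the triangle inequality it suffices to bound the integral of the product of absolute values.

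To extract the $\|\hat f_i\|_\infty$ factors in the form required, I would set $\eta := 1/(2s+2t)$ and split pointwise
\[
|\hat f_i(c_i\alpha)| = |\hat f_i(c_i\alpha)|^{1-\eta}\cdot |\hat f_i(c_i\alpha)|^\eta \le |\hat f_i(c_i\alpha)|^{1-\eta}\, \|\hat f_i\|_\infty^{\eta}.
\]
Pulling the suprema outside the integral and applying H\"older's inequality to what remains, with exponents $p_i := P/(1-\eta)$ on each $\hat f_i$-factor and $q_j := P$ on each $\hat g$-factor, where
\[
P := s + t - s\eta,
\]
the admissibility relation $\sum_i 1/p_i + \sum_j 1/q_j = 1$ holds by design. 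The dilation invariance $\int_\bT |\hat f(c\alpha)|^P\,\d\alpha = \int_\bT |\hat f(\alpha)|^P\,\d\alpha$ for any $c \in \bZ \setminus \{0\}$ (a consequence of the $1$-periodicity of $\hat f$) reduces each relevant $L^P(\bT)$-norm to a restriction integral, and Propositions~\ref{prop6.2} and \ref{prop6.7} then bound each such norm by $O(N^{1 - 1/P})$. Assembling these estimates gives
\[
|\Phi| \ll \prod_{i=1}^s \|\hat f_i\|_\infty^{\eta} \cdot N^{P-1} = N^{s+t-1}\prod_{i=1}^s \Bigl(\frac{\|\hat f_i\|_\infty}{N}\Bigr)^{1/(2s+2t)},
\]
as claimed.

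The only delicate point is ensuring $P > 2T$, so that the restriction estimates actually apply. Since $s + t \ge s_0(d) = 2T + 1$ and $s\eta = s/(2s+2t) < 1$, one has $P = s + t - s\eta > 2T$, with positive slack even in the extremal case $s+t = s_0(d)$. Any larger choice of $\eta$ would threaten this inequality, which is why the exponent $1/(2s+2t)$ (rather than a larger one) appears in the statement. Beyond this balancing of H\"older exponents against the restriction threshold, the argument is a routine assembly of Fourier inversion, the triangle inequality, H\"older, and the inputs recorded in \S\ref{sec6}.
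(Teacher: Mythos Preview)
Your argument is correct and follows essentially the same route as the paper: express $\Phi$ as an integral via orthogonality, peel off a power of $\|\hat f_j\|_\infty$, apply H\"older, and invoke the restriction estimates of Propositions~\ref{prop6.2} and \ref{prop6.7}. The only cosmetic difference is in the H\"older bookkeeping: the paper first applies H\"older with the uniform exponent $s+t$ and then bounds $|\hat f_j|^{s+t} \le \|\hat f_j\|_\infty^{1/2}|\hat f_j|^{s+t-1/2}$ inside each factor (so restriction is used at exponents $s+t$ and $s+t-1/2$), whereas you extract $\|\hat f_j\|_\infty^{\eta}$ pointwise before H\"older and use the single exponent $P=s+t-s\eta$; both choices satisfy the threshold $>2T$ and lead to the same final bound.
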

	
	\begin{proof} Following the proof of \cite[Lemma 7.1]{CC} yields
		\begin{align*}
			&|\Phi(f_1,\ldots,f_s;g)|
			\le
			\left(
			\int_\bT |\hat g(\alp)|^{s+t} \d \alp
			\right)^{t/(s+t)} \cdot 
			\prod_{j \le s}
			\left( \| \hat f_j \|_\infty^{1/2}
			\int_\bT |\hat f_j(\alp)|^{s+t-1/2}
			\d \alp
			\right)^{1/(s+t)}.
		\end{align*}
		Propositions \ref{prop6.2} and \ref{prop6.7} now give
		\begin{align*}
			\Phi(f_1,\ldots,f_s;g) &\ll
			N^{(s+t-1)t/(s+t)}
			\prod_{j \le s}
			\left(
			\| \hat f_j \|_\infty^{1/(2s+2t)}
			N^{(s+t-3/2)/(s+t)} \right) \\
			&= N^{s+t-1}
			\prod_{j \le s} (\| \hat f_j \|_\infty / N)^{1/(2s+2t)}.
		\end{align*}
	\end{proof}
	
	\begin{proof}[Proof of Theorem \ref{thm2.5} given Theorem \ref{thm2.7}]
		
		Fix $\del, h, r, L_1,$ and $L_2$. The implied constants are henceforth allowed to depend on all of these parameters. 
		Let $\tilde \del$ be small in terms of the fixed parameters, and let $w \in \bN$ be large in terms of them. We insist that $\tilde \del$ is an integer power of 2, and that $\tilde \del \gg 1$, so that dependence on $\tilde \del$ is subsumed by dependence on the fixed parameters.
		
		Given sufficiently large $X\in\N$, we define $D,N,W,Z$ as in \S\ref{sec3}. For the purposes of proving Theorem~\ref{thm2.5}, we may assume that $Z\in\N$. Indeed, assuming $X$ is sufficiently large relative to $D$, any $A\subseteq\cP_X$ with $|A|\geqslant\delta |\cP_X|$ must satisfy $|A\cap[X-D]|>(\delta/2)|\cP_X|$. Thus, by replacing $(\delta,A,X)$ with $(\delta/2,A\cap[X-a],X-a)$ for some $a\in[D]$ such that $D\mid(X-a-r_D)$, we can assume that $D\mid(X-r_D)$, whence $Z\in\N$.
		
		Set
		\[
		\tilde \cC_i = 
		\{ z \in [Z]: r_D + Dz \in \cC_i \}
		\qquad (1 \le j \le r).
		\]
		By Theorem \ref{thm2.7}, there exists $k \in [r]$ such that every $\tilde \cA \subseteq [N]$ with $|\tilde \cA| \ge \tilde \del N$ satisfies
		\[
		\# \{ (\bn, \bz) \in \tilde \cA^s \times \tilde \cC_k^t:
		L_1(\bn) = L_2(h_D(\bz)) \}
		\gg N^{s-1} \left(
		\frac{DZ}{\varphi(D)\log Z} 
		\right)^t.
		\]
		By a simple counting argument, the number of solutions counted here for any given value of $z_t$ is 
		$O(|\tilde C_k| N^{s-1} 
		(Z/\log Z)^{t-1})$, whence $|\tilde \cC_k| \gg Z/\log Z$. Thus,
		\[
		|\cC_k| \ge |\tilde \cC_k| 
		\gg \frac{Z}{\log Z} \gg_w 
		\frac{X}{\log X}.
		\]
		
		Define $\cA$ by \eqref{eqn3.2},
		and define
		\[
		f = \nu 1_\cA,
		\qquad
		g_i(n) = 
		\frac{N \varphi(D) \log Z}{DZ}
		\sum_{\substack{
				z \in \tilde C_i \\ h_D(z) = n}}1
		\quad (1 \le i \le r).
		\]
		By Lemma \ref{lem5.1} and the dense model lemma \cite[Theorem 5.1]{Pre2017}, there exists a function $f_0$ such that
		\[
		0 \le f_0 \le 1_{[N]},
		\qquad
		\| \hat f - \hat f_0 \|_\infty
		\ll (\log w)^{-3/2} N.
		\]
		For $\ell \in [s]$, we write $\bu^{(\ell)} = (u_1^{(\ell)},
		\ldots, u_s^{(\ell)})$, where
		\[
		u_j^{(\ell)} = 
		\begin{cases}
			f_0, &\text{if } j < \ell \\
			f-f_0, &\text{if } j = \ell \\
			f, &\text{if } j > \ell.
		\end{cases}
		\]
		The telescoping identity and Lemma \ref{lem7.1} now give
		\begin{equation}\label{eqn7.1}
			\Phi(f; g_i) - \Phi(f_0; g_i)
			= \sum_{\ell \le s}
			\Phi(\bu^{(\ell)}; g_i)
			\ll (\log w)^{-3/(4s + 4t)}
			N^{s+t-1} \quad (1 \le i \le r).
		\end{equation}
		By Lemma \ref{lem3.1}, we have
		\[
		\sum_{n \in \bZ} f(n) \gg N.
		\]
		Since $\hat f(0) - \hat f_0(0) \ll (\log w)^{-3/2}$, and $w$ is large, we also have
		\[
		\sum_{n \in \bZ} f_0(n) \gg N.
		\]
		
		Let $c$ be a small, positive constant which depends only on the fixed parameters. Setting
		\[
		\tilde \cA = \{ n \in \bZ:
		f_0(n) \ge c \},
		\]
		the popularity principle \cite[Exercise 1.1.4]{TV2006} allows us to obtain the lower bound $|\tilde \cA| \ge \tilde \del N$. Now Theorem \ref{thm2.7} gives
		$
		\Phi(\tilde \cA; g_k) \gg
		N^{s+t-1}.
		$
		Since $0\leqslant c1_{\tilde{A}}\leqslant f_0$, it follows that $\Phi(f_0; g_k) \gg N^{s+t-1}$. Taking $w$ sufficiently large, we infer from (\ref{eqn7.1}) that 
		\[
		\Phi(f; g_k) \gg N^{s+t-1}.
		\]
		Finally, since $N\leqslant h(X)\asymp X^d$, we conclude that
		\begin{align*}
			&\# \{(\bx, \by) \in A^s \times \cC_k^t: L_1(h(\bx)) = L_2(h(\by)) \}
			\\
			&\ge \| f \|_\infty^{-s}
			\| g_k \|_\infty^{-t} \Phi(f; g_k)
			\gg_w 
			\left(
			\frac{N \log X}{X}
			\right)^{-s}
			\left( \frac{N \log X}{X}
			\right)^{-t}
			N^{s+t-1} \\
			&\gg_w
			\frac{X^{s+t-d}}{(\log X)^{s+t}}.
		\end{align*}
		By specifying that $w=O_{\delta,h,r,L_1,L_2}(1)$, this completes the proof.
	\end{proof}
	
	\section{Arithmetic regularity}\label{sec8}
	
	Our only remaining task is to prove Theorem \ref{thm2.7}. We are therefore interested in counting solutions to the linearised equation (\ref{eqn2.7}). We seek a colour class $\cC_k$ such that there are many solutions $(\bn,\bz)$ to (\ref{eqn2.7}) with $\bn\in\cC_k^t$ and $\bz\in\cA^s$ for some arbitrary dense set $\cA\subseteq [N]$. 
	
	Following \cite{Pre2021} and \cite{CC}, we begin by simplifying the statement of Theorem \ref{thm2.7}. Rather than assert the existence of a colour class $\cC_k$ which gives many solutions with respect to all dense sets $\cA$, we instead consider a finite collection of dense sets $\cA_1,\ldots,\cA_r\subseteq[N]$ and seek a colour class $\cC_k$ such that, for all $i\in[r]$, there are many solutions to (\ref{eqn2.7}) with $(\bn,\bz)\in\cA_i^s\times\cC_k^t$. This leads to the following version of Theorem \ref{thm2.7}.
	
	\begin{thm} \label{thm8.1}
		Let $r$ and $d\geqslant 2$ be positive integers, and let $0<\delta<1$. 
		Let $h$ be an integer polynomial of degree $d$ which is intersective of the second kind.
		Let $s \ge 1$ and $t \ge 0$ be integers such that $s + t \ge s_0(d)$. Let
		\[
		L_1(\bx) \in \bZ[x_1,\ldots,x_s], \qquad L_2(\by) \in \bZ[y_1,\ldots,y_t] 
		\]
		be non-degenerate linear forms such that $L_1(1,\ldots,1) = 0$ and $\gcd(L_1)=1$.
		Then there exists $\eta=\eta(d,\delta,L_1,L_2)\in(0,1)$ such that the following is true.
		Let $D, Z \in \bN$ satisfy $Z \ge Z_0(D, h, r, \del, L_1, L_2)$, and set $N:=h_D(Z)$. 
		Suppose $\cA_1,\ldots,\cA_r\subseteq[N]$ satisfy $|\cA_i|\geqslant\delta N$ for all $i\in[r]$. If
		\[
		[\eta Z,Z]\cap\{ z \in [Z]: r_D + Dz \in \cP \}
		= \cC_1 \cup \cdots \cup \cC_r,
		\]
		then there exists $k \in [r]$ such that
		\[
		\# \{ (\bn,\bz) \in \cA_i^s \times \cC_k^t: L_1(\bn) = L_2(h_D(\bz)) \} \gg 
		N^{s-1} 
		\left(\frac{DZ}{\varphi(D)\log Z} \right)^t \qquad(1\leqslant i\leqslant r).
		\]
		The implied constant may depend on $h, L_1, L_2, r, \delta$.
	\end{thm}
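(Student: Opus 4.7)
The plan is to combine an arithmetic regularity lemma on the dense-set side with a uniform equidistribution statement for primes $r_D + Dz$ whose polynomial image $h_D(z)$ lies in a prescribed Bohr set. For each $i \in [r]$, apply a regularity decomposition $1_{\cA_i} = f_i + g_i + u_i$, where $f_i \in [0,1]$ is approximately constant on cosets of a Bohr set $B$ of rank and radius depending only on $r, \delta, L_1, L_2$; $g_i$ has small $L^2$-norm; and $u_i$ is negligible in an appropriate Gowers norm. By refining, we may take a single $B$ that works for all $i$ simultaneously. Since $s + t \ge s_0(d) = 2T(d) + 1$, the restriction estimates of \S\ref{sec6} (Propositions \ref{prop6.2} and \ref{prop6.7}) supply a generalized von Neumann theorem showing that the $u_i$ and $g_i$ pieces contribute negligibly to the count, so the main term comes from $f_i^{\otimes s}$ paired with $1_{\cC_k}^{\otimes t}$.

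Expanding $f_i$ in the characters defining $B$, the main term becomes a sum of terms of the shape
\[
F_i \cdot \#\bigl\{ \bz \in \cC_k^t : L_2(h_D(\bz)) \text{ lies in a prescribed } B\text{-coset}\bigr\},
\]
where $F_i$ is a density weight coming from $f_i$. The translation invariance $L_1(1,\ldots,1) = 0$ and $\gcd(L_1) = 1$ ensure that the zero-frequency contribution dominates, reducing matters to lower bounding the cardinality of the \emph{prime polynomial Bohr set}
\[
\bigl\{z \in [\eta Z, Z] : r_D + Dz \in \cP,\; h_D(z) \in B'\bigr\},
\]
for various translates $B'$ of $B$, uniformly in $D$. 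To produce a single $k \in [r]$ working for every $\cA_i$, we pigeonhole the $r$-colouring against the finitely many Bohr conditions generated by the regularity step: because $B$ has controlled rank depending only on the fixed parameters, one $\cC_k$ must capture a $\gg 1/r$ proportion of these primes across every relevant Bohr condition, and hence simultaneously for every $i$. The introduction of the cutoff $\eta$ reflects the need to work on a slightly truncated range of $z$ so that the attendant weight function $\nu_D$ is well-behaved.

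The hard part — indeed the main novelty highlighted in the introduction — is the uniform lower bound on the prime polynomial Bohr set density. This is the content of \S\ref{sec9}: its proof proceeds by induction on the rank of $B$, with the base case supplied by Harman's Diophantine approximation result \cite{Har1993}, and the inductive step driven by the exponential sum estimates of \S\ref{sec4} applied to $h_D$. The uniformity in $D$ depends crucially on Lucier's bound $\gcd(h_D - h_D(0)) \ll_h 1$ from \S\ref{subsec2.4}, which allows the minor-arc estimate (Lemma \ref{lem4.2}) and the major-arc asymptotic (Lemma \ref{lem4.3}) to be deployed on sums of the form $\sum_{p \equiv r_D \bmod D} e(\alp \, h_D((p-r_D)/D))$ with constants independent of $D$.
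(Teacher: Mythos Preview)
Your overall strategy matches the paper's: apply the arithmetic regularity lemma (Lemma~\ref{lem8.2}) simultaneously to the $1_{\cA_i}$, remove the uniform part via Fourier control driven by the restriction estimates of \S\ref{sec6} (this is Lemma~\ref{lem8.4}), and reduce to the prime polynomial Bohr set density estimate (Theorem~\ref{thm9.2}). The references to Harman, to Lucier's content bound, and to the exponential sum lemmas of \S\ref{sec4} are all apt.

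However, your description of the Bohr-set/pigeonhole step contains a genuine muddle. You speak of ``various translates $B'$'' and of choosing $k$ to capture a $\gg 1/r$ proportion ``across every relevant Bohr condition''; taken at face value this would require a single colour class to be dense in several a priori independent sets, and ordinary pigeonhole does not give that. The paper's point (Lemma~\ref{lem9.1}) is sharper and simpler: because the regularity lemma outputs a \emph{single} phase $\btheta$ common to all $i$, and because each $f_\str^{(i)}(n)=F_i(n\btheta)$ with $F_i$ Lipschitz, any $z$ with $\|h_D(z)\btheta\|<\sig/K$ is an almost-period for \emph{every} $f_\str^{(i)}$. Thus there is only \emph{one} centred Bohr set
\[
\Omega=\{z\in[\eta Z,\eta_0 Z]: r_D+Dz\in\cP,\ \|h_D(z)\btheta\|<\sig/K\}
\]
in play, and any $\bz\in\Omega^t$ already gives $\Psi_\bz(f_\str^{(i)}+f_\sml^{(i)})\gg_\delta N^{s-1}$ for all $i$ at once. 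A single pigeonhole on the colouring of $\Omega$ then yields $k$ with $|\cC_k\cap\Omega|\ge|\Omega|/r$, and summing over $\bz\in(\cC_k\cap\Omega)^t$ finishes. No translates are needed, and the simultaneity in $i$ is delivered by the common $\btheta$ via Lemma~\ref{lem9.1}, not by the pigeonhole.

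A smaller correction: the $L^2$-small piece $f_\sml^{(i)}$ is not discarded by the generalised von Neumann step (its Fourier transform need not be small). In the paper it is retained alongside $f_\str^{(i)}$, and its contribution is absorbed by the $O_{L_1,L_2}(\sig)$ error term in Lemma~\ref{lem9.1}; only $f_\unf^{(i)}$ is removed, via Lemma~\ref{lem8.4}.
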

	
	\begin{proof}[Proof of Theorem \ref{thm2.7} given Theorem \ref{thm8.1}]
		In view of Proposition \ref{prop2.9}, it is enough to prove Theorem \ref{thm2.7} under the assumption that $\gcd(L_1)=1$. We claim that Theorem \ref{thm2.7} holds with the same quantities $Z_0(D,h,r,\delta,L_1,L_2)$, $\eta(d,\delta,L_1,L_2)$, and the same implicit constant $C=C(h,L_1,L_2,r,\delta)$ appearing in the final bound. Suppose for a contradiction that this is false. For each $k\in[r]$, we can then find $\cA_k\subseteq[N]$ with $|\cA_k|\geqslant\delta N$ such that
		\[
		\# \{ (\bn,\bz) \in \cA_k^s \times \cC_k^t: L_1(\bn) = L_2(h_D(\bz)) \} <
		CN^{s-1} 
		\left(\frac{DZ}{\varphi(D)\log Z} \right)^t.
		\]
		Applying Theorem \ref{thm8.1} to the collection of dense sets $\cA_1,\ldots,\cA_r$ delivers a contradiction.
	\end{proof}
	
	By taking $Z$ sufficiently large relative to $\eta$ in Theorem \ref{thm8.1}, we may assume that $h_D$ is positive and strictly increasing on the real interval $[\eta Z, Z]$. We can then define a function $\cQ_{Z}=\cQ_{Z;\eta,h_D}:\Z\to\R$ by
	\begin{equation*}
		\cQ_Z(t) :=
		\begin{cases}
			z, &\text{if }z\in[\eta Z, Z] \text{ satisfies } t=h_D(z) \\
			0, &\text{otherwise}.
		\end{cases}
	\end{equation*}
	Notice that $\cQ_Z$ is supported on $h_D([\eta Z, Z])\subseteq [N]$, and the restriction of $\cQ_Z$ to $h_D([\eta Z, Z])$ defines a bijection from $h_D([\eta Z, Z])$ to $[\eta Z, Z]$. Hence, given functions $f_1,\ldots,f_s:\Z\to\R$ supported on $[N]$, and $g:\Z\to\R$ supported on $[\eta Z, Z]$, we have
	\begin{equation*}
		\label{eqnCountingH}
		\Phi(f_1,\ldots,f_s;
		g\circ\cQ_Z) 
		= \sum_{L_1(\bn) = L_2(h_D(\bz))}
		f_1(n_1) \cdots f_s(n_s)
		g(z_1) \cdots g(z_t).
	\end{equation*}
	
	\begin{lemma}
		[Arithmetic regularity lemma]
		\label{lem8.2}
		Let $r\in\N$, $\sig>0$, and let $\cF:\R_{\geqslant 0}\to\R_{\geqslant 0}$ be a monotone increasing function. Then there exists a positive integer $K_{0}(r;\sig,\cF)\in\N$ such that the following is true. Let $N\in\N$ and $f_{1},\ldots,f_r:[N]\to[0,1]$. Then there is a positive integer $K\leqslant K_{0}(r;\sig,\cF)$ and a phase $\btheta\in\T^{K}$ such that, for every $i\in[r]$, there is a decomposition
		\begin{equation*}
			f_{i}=f_{\str}^{(i)}+f_{\sml}^{(i)}+f_{\unf}^{(i)}
		\end{equation*}
		of $f_{i}$ into functions $f_{\str}^{(i)},f_{\sml}^{(i)},f_{\unf}^{(i)}:[N]\to[-1,1]$ with the following stipulations.
		\begin{enumerate}[\upshape(I)]
			\item\label{itemNon} The functions $f_{\str}^{(i)}$ and $f_{\str}^{(i)}+f_{\sml}^{(i)}$ take values in $[0,1]$.
			\item\label{itemL2} The function $f_{\sml}^{(i)}$ obeys the bound $\lVert f_{\sml}^{(i)}\rVert_{L^{2}(\Z)}\leqslant\sig\lVert 1_{[N]}\rVert_{L^{2}(\Z)}$.
			\item\label{itemUnf} The function $f_{\unf}^{(i)}$ obeys the bound $\lVert \hat{f}_{\unf}^{(i)}\rVert_{\infty}\leqslant\lVert \hat{1}_{[N]}\rVert_{\infty}/\cF(K)$.
			\item\label{itemSum} The function $f_{\str}^{(i)}$ satisfies $\sum_{m=1}^{N}(f_{i}-f_{\str}^{(i)})(m)=0$.
			\item\label{itemStr} There exists a $K$-Lipschitz function $F_{i}:\T^{K}\to[0,1]$ such that $F_{i}(x\btheta )=f_{\str}^{(i)}(x)$ for all $x\in[N]$. 
		\end{enumerate}
	\end{lemma}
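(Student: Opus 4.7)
My plan is to run a simultaneous Koopman--von Neumann energy-increment argument across all $r$ functions to produce a common frequency vector $\btheta$, and then to mollify the resulting step-function structured parts to obtain the Lipschitz representation required by (V).

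First, initialise with $K=0$ and the trivial decomposition in which each $f_{\str}^{(i)}$ is the constant $N^{-1}\sum_{m\in[N]}f_i(m)$. Iteratively, at stage $j$ with a phase vector $\btheta^{(j)}=(\theta_1,\ldots,\theta_j)\in\T^j$, I ask whether every residual $f_i-f_{\str}^{(i)}$ satisfies $\lVert\widehat{f_i-f_{\str}^{(i)}}\rVert_\infty\le\lVert\widehat{1_{[N]}}\rVert_\infty/\cF^\ast(j)$ for a slightly inflated profile $\cF^\ast$. If the test fails for some index $i_0$ with a witnessing frequency $\theta_{j+1}$, append $\theta_{j+1}$ to the phase vector and redefine each $f_{\str}^{(i)}$ as the conditional expectation of $f_i$ onto the $\sigma$-algebra generated by the Bohr partition associated with $\btheta^{(j+1)}$ at a sufficiently small radius. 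A standard Plancherel computation yields an $L^2$-energy increment of at least $\cF^\ast(j+1)^{-2}$ for $f_{\str}^{(i_0)}$, so by the uniform bound $0\le f_{\str}^{(i)}\le 1$ the process must terminate at some $K\le K_0(r,\sig,\cF)$.

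At termination, every $f_{\str}^{(i)}$ is a $[0,1]$-valued step function on atoms of the Bohr partition defined by $\btheta=\btheta^{(K)}$, with the same mean as $f_i$, giving (IV) and the first half of (I) for free. To obtain (V), I would mollify the step function on $\T^K$ by convolving with a smooth bump of width comparable to $1/K$, truncate the result to $[0,1]$, and add a small constant to restore the mean; call the result $F_i$ and set $f_{\str}^{(i)}(x):=F_i(x\btheta)$ on $[N]$. The residual after the iteration (which still obeys the Fourier bound demanded in (III) thanks to the inflation $\cF^\ast$) is declared $f_{\unf}^{(i)}$, and the mollification error is absorbed into $f_{\sml}^{(i)}$. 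Choosing the Bohr radius small enough relative to $\sig$ controls $\lVert f_{\sml}^{(i)}\rVert_{L^2}$ to give (II), while truncation together with the constant shift keeps $f_{\str}^{(i)}+f_{\sml}^{(i)}\in[0,1]$.

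The main obstacle is the bookkeeping needed to enforce (I), (II), (III), (IV), and (V) simultaneously. The raw conditional expectation lies in $[0,1]$ with the correct mean but is not Lipschitz; its mollification is Lipschitz but can overshoot $[0,1]$ and will perturb both the mean of $f_i$ and the Fourier uniformity of the residual. Controlling the latter forces one to run the energy iteration against a strictly larger growth function $\cF^\ast\gg\cF$ so that the gap absorbs the Fourier perturbation coming from truncation, while the Bohr radius at each step must decay faster than any prescribed function of the current dimension $j$ so that the mean-correction constant is negligible in $L^2$. This careful scheduling of the three parameters (Bohr radius, growth function $\cF$, and Lipschitz scale $1/K$) is the delicate part of the argument, and it is what determines the final bound $K_0(r,\sig,\cF)$.
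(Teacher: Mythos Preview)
The paper does not actually prove this lemma; its entire proof reads ``This is \cite[Lemma 8.3]{CC}'', citing the authors' earlier work. So there is no in-paper argument to compare your proposal against.

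That said, your sketch follows the standard energy-increment route to arithmetic regularity (in the lineage of Green and Green--Tao), which is almost certainly what \cite{CC} does as well. You correctly identify both the engine of the proof (append a large Fourier frequency to the phase vector, pass to conditional expectation on the refined Bohr partition, gain $\gg \cF^\ast(j)^{-2}$ in $L^2$ energy for the offending index, and use the uniform bound $\sum_i \lVert f_{\str}^{(i)}\rVert_2^2 \le r$ to force termination) and the genuine technical nuisance (the raw conditional expectation satisfies (I) and (IV) but not (V), while mollification gives (V) at the cost of perturbing the mean, the range, and the Fourier uniformity of the residual). Your remedy of running the iteration against an inflated profile $\cF^\ast$ and scheduling the Bohr radius to decay fast enough in the dimension is exactly the standard fix. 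As a plan this is sound; to make it a proof you would need to specify $\cF^\ast$ and the radii explicitly and check that the truncation-plus-constant step really preserves (I) and (IV) simultaneously, but none of this hides a missing idea.
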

	\begin{proof}
		This is \cite[Lemma 8.3]{CC}.
	\end{proof}
	
	Applying this to a given function $f$ allows us to write $\Phi(f; g)$ as the sum of $\Phi(f_\str + f_\sml; g)$ and $2^s -1$ terms $\Phi(f_1,\ldots,f_s;g)$, where at least one of the $f_i$ equals $f_{\unf}$ and the rest are equal to $f_{\str}+f_{\sml}$. As is typical in applications of the arithmetic regularity lemma, we expect the term $\Phi(f_{\str}+f_{\sml};g)$ to provide the main contribution, whilst the remaining terms should be asymptotically negligible. This prediction is verified by combining Property~(\ref{itemUnf}) of Lemma~\ref{lem8.2} with our Fourier control result (Lemma \ref{lem7.1}).
	
	To carry out this strategy of removing the contribution of $f_\unf$, we need to perform a minor technical manoeuvre. Applying Lemma \ref{lem7.1} requires us to bound the function $g$ appearing in $\Phi(f; g)$ in terms of $\nu_D$. To achieve a sharp asymptotic lower bound for the number of solutions, we desire a bound of the form 
	$g\lVert \nu_D\rVert_{\infty}
	\ll \nu_D$. This is the method we used in \cite[Lemma 8.4]{CC}, only with $\mu_D$ in place of $\nu_D$. However, this relied on the fact that $\mu_D$ is constant on its support, whilst $\nu_D$ is not. In particular, if $g$ is the indicator function of a colour class, then $g(z)\lVert \nu_D\rVert_{\infty}$ could be asymptotically larger than $\nu_D(z)$ for small $z$.
	
	To overcome this issue, we restrict attention from $[Z]$ to $[\eta Z, Z]$, for some sufficiently small $\eta>0$. On this latter interval, the function $\nu_D$ does not vary too much. This is made precise by the following lemma, which is a variation of \cite[Lemma 8.12]{CC}.
	
	\begin{lemma}
		\label{lem8.3}
		Let $P$ be a real polynomial of degree $d\in\N$ with positive leading coefficient. Then there exists a positive integer $M_0(P)$ such that the following is true. For all $\eta\in(0,1)$, if $x\in\R$ satisfies $x\geqslant \eta^{-1}M_{0}(P)$, then     
		\begin{equation*}
			\eta^d P(x) \leqslant 3 P(\eta x) \leqslant 9\eta^d P(x).
		\end{equation*}
	\end{lemma}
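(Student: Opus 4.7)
Let $c_d > 0$ denote the leading coefficient of $P$. The key observation is that $P(y)/(c_d y^d) \to 1$ as $y \to \infty$, since the lower-order terms contribute at most $O(y^{-1})$ to this ratio. Concretely, writing $P(y) = \sum_{j=0}^{d} c_j y^j$, we have
\[
\left| \frac{P(y)}{c_d y^d} - 1 \right| \le \sum_{j=0}^{d-1} \frac{|c_j|}{c_d}\, y^{j-d},
\]
which tends to $0$ as $y \to \infty$. The plan is to choose $M_0 = M_0(P)$ large enough that for every real $y \ge M_0$,
\[
\tfrac{2}{3}\, c_d y^d \;\le\; P(y) \;\le\; \tfrac{4}{3}\, c_d y^d.
\]

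Given this, suppose $\eta \in (0,1)$ and $x \ge \eta^{-1} M_0$. Then $x \ge M_0$ and, crucially, $\eta x \ge M_0$ as well, so both bounds apply at $x$ and at $\eta x$. For the lower inequality, we combine the upper bound for $P(x)$ with the lower bound for $P(\eta x)$:
\[
\eta^d P(x) \;\le\; \tfrac{4}{3}\, c_d \eta^d x^d \;=\; \tfrac{4}{3}\, c_d (\eta x)^d \;\le\; 2\, P(\eta x) \;\le\; 3\, P(\eta x).
\]
For the upper inequality, we use the complementary pair of bounds:
\[
3 P(\eta x) \;\le\; 4\, c_d (\eta x)^d \;=\; 4\, c_d \eta^d x^d \;\le\; 6\, \eta^d P(x) \;\le\; 9\, \eta^d P(x).
\]

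There is no real obstacle: the only ``work'' is selecting $M_0$ so that the bracketing constants $\tfrac{2}{3}$ and $\tfrac{4}{3}$ are valid, which is immediate from the asymptotic dominance of the leading term, and then checking that the constants $2$ and $6$ that arise fit comfortably under the prescribed $3$ and $9$. The factor $\eta^{-1}$ in the threshold $\eta^{-1} M_0$ is exactly what is needed to push $\eta x$ past $M_0$ so that the asymptotic estimate can be applied at the contracted point as well as at $x$ itself.
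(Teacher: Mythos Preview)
Your proof is correct and follows essentially the same approach as the paper's: both choose $M_0$ so that $P(y)$ is bracketed by constant multiples of $c_d y^d$ for $y\ge M_0$ (the paper uses $\tfrac12$ and $\tfrac32$ where you use $\tfrac23$ and $\tfrac43$), and then apply the bracketing at both $x$ and $\eta x$. The minor difference in constants is immaterial.
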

	\begin{proof}
		Let $\ell_P>0$ be the leading coefficient of $P$. We can then find $M_0(P)\in\N$ such that 
		\begin{equation*}
			\ell_P x^d \leqslant 2P(x) \leqslant 3\ell_P x^d
		\end{equation*}
		holds for all real $x\geqslant M_0(P)$. In particular, if $x\geqslant \eta^{-1}M_{0}(P)$, then  
		\begin{equation*}
			\frac{\eta^d}{3} \leqslant \frac{P(\eta x)}{P(x)} \leqslant 3\eta^d.
		\end{equation*}
	\end{proof}
	
	\begin{lemma}
		[Removing $f_{\unf}$]
		\label{lem8.4}
		Let $f:\Z\to[0,1]$ be supported on $[N]$. Let $\eta,\sig>0$, and let $\cF:\R_{\geqslant 0}\to\R_{\geqslant 0}$ be a monotone increasing function. Let $f_{\str},f_{\sml},f_{\unf}$ be the functions obtained upon applying Lemma \ref{lem8.2} to $f$. Then for any $g:\Z\to[0,1]$ supported on the set
		\begin{equation*}
			\{ z\in [Z]: r_D + Dz\in\cP\}\cap[\eta Z,Z],
		\end{equation*}
		we have
		\begin{equation*}
			\lvert\Phi(f;g\circ\cQ_Z)-\Phi(f_{\str}+f_{\sml};g\circ\cQ_Z)\rvert\ll_{h,\eta,D} N^{s-1}\left(\frac{DZ}{\varphi(D)\log Z} \right)^t \cF(K)^{-1/(2s+2t)}.
		\end{equation*}
	\end{lemma}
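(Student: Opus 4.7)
The plan is to expand the difference via a telescoping identity and bound each resulting term using the Fourier control lemma (Lemma~\ref{lem7.1}), in the same spirit as the transference argument used to derive \eqref{eqn7.1}. Define $\bu^{(\ell)} = (u^{(\ell)}_1,\ldots,u^{(\ell)}_s)$ by setting $u^{(\ell)}_j = f_\str + f_\sml$ for $j < \ell$, $u^{(\ell)}_\ell = f_\unf$, and $u^{(\ell)}_j = f$ for $j > \ell$. The telescoping identity
\[
\Phi(f; g\circ\cQ_Z) - \Phi(f_\str + f_\sml; g\circ\cQ_Z) = \sum_{\ell=1}^s \Phi(\bu^{(\ell)}; g\circ\cQ_Z)
\]
reduces matters to bounding each $\Phi(\bu^{(\ell)}; g\circ\cQ_Z)$ by the claimed estimate.

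To apply Lemma~\ref{lem7.1}, I must verify the majorisation hypotheses. Property~(\ref{itemNon}) of Lemma~\ref{lem8.2} guarantees $f_\str + f_\sml \in [0,1]$, so $|f_\unf| \le 1$ on $[N]$ (since $f \in [0,1]$), and hence $|u^{(\ell)}_j| \le 1_{[N]} \le \nu + 1_{[N]}$ for every $j$. The function $g\circ\cQ_Z$ is supported where $n = h_D(z)$ for some $z \in [\eta Z, Z]$ with $r_D + Dz \in \cP$, and at such $n$
\[
\nu_D(n) = \frac{\varphi(D)}{\lam(D)} h'(r_D + Dz)\log(r_D + Dz).
\]
The leading coefficient of $h'$ is positive (otherwise $\nu_D$ could not take values in $[0,\infty)$), so applying Lemma~\ref{lem8.3} to $h'$ together with $\log(r_D + Dz) \asymp \log Z$ yields
\[
\nu_D(n) \gg_{h,\eta,D} M := \frac{\varphi(D)(DZ)^{d-1}\log Z}{\lam(D)}
\]
throughout the support of $g \circ \cQ_Z$. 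Since $g \le 1$, there is a constant $c = c(h,\eta,D) > 0$ such that $g' := cM\,g\circ\cQ_Z$ satisfies $|g'| \le \nu_D$.

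Multilinearity in the second argument gives $\Phi(\bu^{(\ell)}; g\circ\cQ_Z) = (cM)^{-t}\Phi(\bu^{(\ell)}; g')$, and Lemma~\ref{lem7.1} then yields
\[
|\Phi(\bu^{(\ell)}; g\circ\cQ_Z)| \ll_{h,\eta,D} M^{-t}N^{s+t-1}\prod_{j=1}^s\bigl(\|\widehat{u^{(\ell)}_j}\|_\infty/N\bigr)^{1/(2s+2t)}.
\]
Trivially $\|\widehat{u^{(\ell)}_j}\|_\infty \le \|u^{(\ell)}_j\|_1 \le N$ for $j \ne \ell$, while property~(\ref{itemUnf}) of Lemma~\ref{lem8.2} forces $\|\widehat{u^{(\ell)}_\ell}\|_\infty = \|\widehat{f_\unf}\|_\infty \le N/\cF(K)$. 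Hence the Fourier product is $\ll \cF(K)^{-1/(2s+2t)}$.

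Finally, since $N = h_D(Z) \asymp_h (DZ)^d/\lam(D)$, a direct calculation gives $M^{-1}N \asymp_h DZ/(\varphi(D)\log Z)$, whence $M^{-t}N^{s+t-1} \asymp_h N^{s-1}\bigl(DZ/(\varphi(D)\log Z)\bigr)^t$. Summing the resulting estimate over $\ell \in [s]$ concludes the proof. The key technical point---and the reason for restricting to $z \in [\eta Z, Z]$---is that Lemma~\ref{lem8.3} provides a uniform lower bound on $\nu_D$ across the support of $g\circ\cQ_Z$ in this range; without the cutoff, the implicit constant in the majorisation $g\circ\cQ_Z \ll M^{-1}\nu_D$ would degenerate with $Z$, ruining the final asymptotic.
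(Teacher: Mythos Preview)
Your proof is correct and follows essentially the same approach as the paper's: both arguments combine the telescoping identity with Lemma~\ref{lem7.1}, after rescaling $g\circ\cQ_Z$ so that it is majorised by $\nu_D$, with the rescaling factor coming from the uniform lower bound on $\nu_D$ over $[\eta Z,Z]$ furnished by Lemma~\ref{lem8.3}. The paper abstracts slightly by first proving the bound for an arbitrary $G$ with $|G|\le\nu_D$ and then specialising to $G=\xi(g\circ\cQ_Z)$, whereas you work directly with the scaled function, but the content is identical.
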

	
	\begin{proof}
		Note that $|f|\leqslant 1_{[N]}$ and $\lVert \hat{f}\rVert_{\infty} \leqslant N$.
		Thus, by using a telescoping identity, as in the derivation of \eqref{eqn7.1}, Lemma \ref{lem7.1} informs us that
		\begin{equation*}
			\lvert\Phi(f;G)-\Phi(f_{\str}+f_{\sml};G)
			\rvert \ll N^{s+t-1}
			\cF(K)^{-1/(2s+2t)}
		\end{equation*}
		holds for any $G:\Z\to\R$ such that $|G|\leqslant \nu_D$. Taking $G=\xi(g\circ\cQ_Z)$ for some $\xi>0$, we deduce that
		\begin{equation*}
			\lvert
			\Phi(f;g\circ\cQ_Z) -\Phi(f_{\str}+f_{\sml};
			g\circ\cQ_Z) \rvert
			\ll N^{s-1}(N/\xi)^t
			\cF(K)^{-1/(2s+2t)}.
		\end{equation*}
		
		To complete the proof, it remains to find $\xi>0$ with $\xi|g\circ\cQ_Z|\leqslant \nu_D$ such that
		\begin{equation*}
			N/\xi \ll \frac{DZ}{\varphi(D)\log Z}.
		\end{equation*}
		Set
		\begin{equation*}
			B = \{ n \in 
			h_D([\eta Z,Z]\cap\N): n=h_D(z), 
			\quad r_D + Dz \in\cP\}.
		\end{equation*}
		Observe that, for all $n=h_D(z)\in B$, we have
		\begin{equation*}
			\lambda(D)\nu_D(n) \geqslant \varphi(D)
			h'(D\eta Z + r_D)
			\log(D\eta Z + r_D).
		\end{equation*}
		By Lemma \ref{lem8.3}, if $Z$ is sufficiently large relative to $h$, $\eta$, and $D$, then
		\begin{equation*}
			DZh'(D\eta Z +r_D) \log(D\eta Z +r_D) \gg h(X)\log Z = \lambda(D) N\log Z. 
		\end{equation*}
		Since $g\circ\cQ_Z$ is supported on $B$ and takes values in $[0,1]$, we conclude that there exists $c \gg 1$ such that $\xi:=c(DZ)^{-1}\varphi(D)N
		\log Z$ has all the required properties.
	\end{proof}
	
	\section{Prime polynomial Bohr sets}\label{sec9}
	
	The final step of the proof of Theorem \ref{thm8.1} is to obtain a lower bound for the main term $\Phi(f_{\str}+f_{\sml};g\circ\cQ_Z)$. By our assumption that the coefficients of $L_1$ are coprime, we can find $\bv\in\Z^s$, which depends only on $L_1$, such that $L_1(\bv) =1$. We can therefore write
	\begin{equation*}
		\Phi(f_1,\ldots,f_s;g\circ\cQ_Z) = \sum_{\bz\in\Z^t}g(z_1) \cdots g(z_t)\Psi_{\bz}(f_1,\ldots,f_s),
	\end{equation*}
	where we have introduced the auxiliary counting operator
	\begin{equation*}
		\Psi_{\bz}(f_1,\ldots,f_s) := \sum_{L_1(\bn)=0}\prod_{i=1}^{s}f_i(n_i + v_iL_2(h_D(\bz))).
	\end{equation*}
	
	Our goal is to show that there is a large supply of $\bz\in\Z^t$ for which $\Psi_\bz(f_\str + f_\sml)$ is asymptotically as large as possible. This is accomplished by choosing the $z_i$ to lie in a set of `almost-periods' for $f_\str$. These are known as \emph{polynomial Bohr sets} and take the form
	\begin{equation*}
		\{n\in\N:\lVert Q(n)\balp\rVert<\rho\}
		=\bigcap_{i=1}^{K}
		\{n\in\N: \lVert Q(n)\alpha_{i}\rVert
		<\rho\},
	\end{equation*}
	for some $\rho>0$, $K\in\N$, $\balp\in\T^K$, and $Q(x) \in \Z[x]$.
	
	\begin{lemma}
		[Lower bound for $\Psi_{\bz}(f_{\str}+f_{\sml})$]
		\label{lem9.1}
		For all $\delta>0$, there exist positive constants $c_{1}(\delta)=c_{1}(L_1,L_2;\delta)>0$ and $\eta_0 = \eta_0(d,L_1,L_2,\delta)>0$ such that the following is true. Suppose $f:\Z\to[0,1]$ is supported on $[N]$ and satisfies $\lVert f\rVert_1\geqslant \delta N$. Given $\sig \in (0,1]$ and a monotone increasing function $\cF:\R_{\geqslant 0}\to\R_{\geqslant 0}$, let $f_{\str}$, $f_{\sml}$, $K$ and $\btheta$ be as given by applying Lemma~\ref{lem8.2} to $f$. If $\bz\in[\eta Z]^{t}$ satisfies
		\begin{equation*}
			\lVert h_D(z_i)\theta_j\rVert < \sigma/K \qquad (1\leqslant i\leqslant t, \quad 
			1\leqslant j\leqslant K),
		\end{equation*}
		then
		\begin{equation*}
			\Psi_{\bz}(f_{\str}+f_{\sml}) \geqslant \left(c_{1}(\delta)- O_{L_1,L_2}(\sig)\right)N^{s-1}. 
		\end{equation*}
		In particular, if $\sigma$ is sufficiently small relative to $(d,L_1,L_2,\delta)$, then
		\begin{equation*}
			\Psi_{\bz}(f_{\str}+f_{\sml}) \gg_{L_1,L_2,\delta}N^{s-1}. 
		\end{equation*}
	\end{lemma}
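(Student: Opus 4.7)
The plan is to split the argument into two parts. The first is to use the Bohr-set hypothesis on $\bz$, together with the structural decomposition supplied by Lemma~\ref{lem8.2}, to compare $\Psi_{\bz}(f_{\str}+f_{\sml})$ with its unshifted counterpart $\Phi_0(g) := \sum_{L_1(\bn)=0}\prod_i g(n_i)$, where $g := f_\str+f_\sml$. The second is to show $\Phi_0(g) \gg_{\delta,L_1} N^{s-1}$ via a standard supersaturation estimate for linear equations with $L_1(1,\ldots,1)=0$.

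For the first part, I would set $a := L_2(h_D(\bz))$ and exploit $L_1(\bv)=1$: the substitution $m_i = n_i + v_i a$ rewrites the sum as $\Psi_{\bz}(g) = \sum_{L_1(\bn)=0}\prod_i g(n_i+v_i a)$. Since $\lvert g\rvert \le 1$, a telescoping identity reduces the difference $|\Psi_\bz(g)-\Phi_0(g)|$ to bounding, for each $j\in[s]$, the sum $\sum_{L_1(\bn)=0}\lvert g(n_j+v_j a)-g(n_j)\rvert$ over $\bn$ in a box of volume $O(N^{s-1})$. I would split $g$ into its structured and small components. For the structured component, property~(\ref{itemStr}) gives $f_\str(n)=F(n\btheta)$ for some $K$-Lipschitz $F:\T^K\to[0,1]$; combining the Bohr hypothesis $\|h_D(z_i)\theta_\ell\|<\sigma/K$ with the linearity of $L_2$ yields $\|v_j a \btheta\|_\infty \ll_{L_1,L_2}\sigma/K$, so $|f_\str(n+v_j a)-f_\str(n)|\ll_{L_1,L_2}\sigma$ pointwise for $n,n+v_j a\in[N]$. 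Contributions from the boundary where one of $n, n+v_ja$ leaves $[N]$ are controlled by $|v_j a| \ll_{h,L_2}(\eta Z)^d \asymp_h \eta^d N$, yielding total boundary error $O_{L_1,L_2}(\eta^d)N^{s-1}$. For the small part, property~(\ref{itemL2}) gives $\|f_\sml\|_1 \le \|f_\sml\|_2\|1_{[N]}\|_2 \le \sigma N$; summing over $\bn$ with the $j$-th coordinate frozen, the other coordinates range over $O(N^{s-2})$ configurations, giving a contribution of size $O(\sigma N^{s-1})$. Choosing $\eta_0$ sufficiently small (specifically, $\eta_0^d \ll \sigma$), all errors combine to $|\Psi_\bz(g)-\Phi_0(g)|\ll_{L_1,L_2}\sigma N^{s-1}$.

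For the second part, property~(\ref{itemSum}) gives $\sum_m f_\str(m)=\sum_m f(m)\ge\delta N$, while $|\sum_m f_\sml(m)|\le\|f_\sml\|_1\le\sigma N$, so $\sum_m g(m)\ge (\delta/2)N$ provided $\sigma\le\delta/2$. Let $B=\{n\in[N]:g(n)\ge\delta/4\}$; a Markov-type estimate yields $|B|\ge(\delta/4)N$ and $g\ge(\delta/4)1_B$ pointwise. Since $L_1$ is non-degenerate with $\gcd(L_1)=1$, $L_1(1,\ldots,1)=0$, and $s\ge s_0(d)\ge 2d+1\ge 5$, the translation-invariance of $L_1$ allows a standard Varnavides-type supersaturation argument (of the kind established in \cite{CC} for the analogous linear regularity step) to deliver
\[
\#\{\bn\in B^s:L_1(\bn)=0\}\gg_{\delta,L_1}N^{s-1},
\]
whence $\Phi_0(g)\ge(\delta/4)^s\cdot\#\{\bn\in B^s:L_1(\bn)=0\}\gg_{\delta,L_1}N^{s-1}=:c_1(\delta)N^{s-1}$.

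The main obstacle is the boundary issue in the first part. The identity $f_\str(n)=F(n\btheta)$ holds only for $n\in[N]$, so the $K$-Lipschitz bound transfers cleanly only to pairs $(n,n+v_j a)\in[N]\times[N]$. This is precisely what forces the restriction $\bz\in[\eta Z]^t$: taking $\eta$ small keeps $|a|=|L_2(h_D(\bz))|\ll_{h,L_2}\eta^d N$ small relative to $N$, so the boundary error gets absorbed into the $\sigma$-error under an appropriate choice $\eta_0=\eta_0(d,L_1,L_2,\delta)$.
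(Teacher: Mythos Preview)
Your approach is correct and is precisely the standard arithmetic-regularity argument that underlies the cited result: the paper itself does not give a proof here, simply invoking \cite[Lemma~8.13]{CC} with the parameter substitution $\rho = \sigma/K$. Your two-step plan---compare $\Psi_\bz(g)$ to the unshifted count $\Phi_0(g)$ via telescoping, the $K$-Lipschitz bound on $f_\str$, and the $L^2$-smallness of $f_\sml$; then bound $\Phi_0(g)$ from below by popularity plus Varnavides supersaturation for the translation-invariant form $L_1$---is exactly what that lemma does.

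One small imprecision in your bookkeeping: you write that the boundary error $O_{L_1,L_2}(\eta_0^d)N^{s-1}$ is ``absorbed into the $\sigma$-error'' via $\eta_0^d \ll \sigma$. That choice would make $\eta_0$ depend on $\sigma$, contradicting the stated dependence $\eta_0 = \eta_0(d,L_1,L_2,\delta)$. The correct move is to absorb the boundary error into the \emph{main term}: if supersaturation gives $\Phi_0(g) \ge c'(\delta)N^{s-1}$, choose $\eta_0$ so that the boundary contribution $C_{L_1,L_2}\eta_0^d$ is at most $c'(\delta)/2$, and set $c_1(\delta) = c'(\delta)/2$. The residual error, coming only from the Lipschitz increment and from $\|f_\sml\|_1 \le \sigma N$, is then genuinely $O_{L_1,L_2}(\sigma)N^{s-1}$, which matches the statement and explains why $\eta_0$ depends on $\delta$ but not on $\sigma$.
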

	\begin{proof}
		This is \cite[Lemma 8.13]{CC} with $\rho=\sigma/K$.
	\end{proof}
	
	Recall that we seek solutions to the linearised equation (\ref{eqn2.7}) with $r_D + Dz_j$ prime for all $j$. In view of Lemma \ref{lem9.1}, we are therefore interested in sets of the form
	\begin{equation}\label{eqn9.1}
		\cB_h(\balp,\rho):=\{p\in\cP: p\equiv r_D\mmod{D}, \;\lVert \balp h(p)/\lam(D) \rVert < \rho \}.
	\end{equation}
	The main purpose of this section is to establish the following density bounds for these \emph{prime polynomial Bohr sets}.
	
	\begin{thm}
		\label{thm9.2}
		Let $K,D,d\in\N$, $\rho>0$, and let $h$ be an integer polynomial of degree $d$ which is intersective of the second kind. Then there exists a positive real number $\Delta(\rho)=\Delta(h,K;\rho)$ and a positive integer $P_1=P_1(D,h,K,\rho)$ such that the following is true for all $P\geqslant P_1$. If $\balp\in\T^K$, then
		\begin{equation*}
			\sum_{p\in\cB}\log p \geqslant\frac{ \Delta(\rho)P}{\varphi(D)},
		\end{equation*}
		where $\cB=[P]\cap\cB_h(\balpha,\rho)$. Moreover, we may take
		\begin{equation*}
			\Delta(h,1;\rho) \gg_{h,\eps} \rho^{d+3+\eps}, \qquad \Delta(h,K;\rho) 
			\gg_{h,K, \eps} \:
			\rho^{3 + K(d+\eps)} \Delta\left(h,K-1;\frac{\rho^{2}}{2K^2}\right) \quad (K>1).
		\end{equation*}
	\end{thm}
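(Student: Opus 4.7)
The plan is to argue by induction on $K$, which is naturally suggested by the recursive form of the claimed bound.

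\textbf{Base case ($K=1$).} The goal is to count primes $p \le P$ satisfying $p \equiv r_D \mmod D$ and $\|\alp h(p)/\lam(D)\| < \rho$, drawing on Harman's Diophantine-approximation framework. Construct a smooth trigonometric minorant $\phi \le 1_{\{\|\cdot\| < \rho\}}$ on $\bT$ (for instance, of Selberg--Vaaler type) with $\hat\phi(0) \gg \rho$ and rapid Fourier decay $|\hat\phi(k)| \ll \min(\rho, |k|^{-2}\rho^{-1})$. Then
\[
\sum_{p \in \cB} \log p \ge \sum_{\substack{p \le P \\ p \equiv r_D \mmod D}} \log p\cdot \phi(\alp h(p)/\lam(D)) = \sum_{k \in \bZ} \hat\phi(k)\, S(k),
\]
where $S(k) := \sum_{p \le P,\, p\equiv r_D \mmod D} \log p \cdot e(k\alp h(p)/\lam(D))$. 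The $k=0$ term, handled via Theorem \ref{thm2.1}, supplies the main contribution $\hat\phi(0) P/\varphi(D) \gg \rho P/\varphi(D)$. For $k \neq 0$, $S(k)$ is a prime Weyl sum whose polynomial phase has leading coefficient $k\alp c_d/\lam(D)$, where $c_d$ is the leading coefficient of $h$; one deploys Lemma \ref{lem4.2} when this coefficient is minor arc (yielding power-of-log savings) and Lemma \ref{lem4.3} when it is major arc (yielding an asymptotic that is either small or cancels in aggregate). Truncating the Fourier expansion at $|k| \asymp \rho^{-O(1)}$ and summing the resulting contributions yields $\Delta(h,1;\rho) \gg_{h,\eps} \rho^{d+3+\eps}$.

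\textbf{Inductive step.} Assume the result for $K-1$ and set $\rho' := \rho^2/(2K^2)$. With $\balp' = (\alp_1,\ldots,\alp_{K-1})$, choose $\phi_K$ as above and isolate the $K$-th constraint via Fourier expansion:
\[
\sum_{p \in \cB} \log p \ge \sum_{p \in [P] \cap \cB_h(\balp', \rho')} \log p \cdot \phi_K(\alp_K h(p)/\lam(D)) = \sum_{k \in \bZ} \hat\phi_K(k)\, T(k),
\]
where $T(k) := \sum_{p \in [P] \cap \cB_h(\balp', \rho')} \log p \cdot e(k\alp_K h(p)/\lam(D))$. The $k=0$ term contributes
\[
\hat\phi_K(0) \sum_{p \in [P] \cap \cB_h(\balp', \rho')} \log p \gg \rho \cdot \Delta(h, K-1; \rho') \cdot P/\varphi(D)
\]
by the induction hypothesis, which accounts for the $\Delta(h, K-1; \rho^2/(2K^2))$ factor in the target bound. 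For $k \neq 0$, bound $T(k)$ by Fourier-expanding each of the remaining $K-1$ Bohr constraints via analogous smooth minorants, reducing $T(k)$ to a weighted sum, indexed by $(k_1,\ldots,k_{K-1}) \in \bZ^{K-1}$, of pure exponential sums
\[
\sum_{\substack{p \le P \\ p \equiv r_D \mmod D}} \log p \cdot e\Bigl( \frac{h(p)}{\lam(D)} \Bigl( k \alp_K + \sum_{i < K} k_i \alp_i \Bigr) \Bigr),
\]
each bounded via Lemmas \ref{lem4.2} and \ref{lem4.3}. Summing these contributions, exploiting the Fourier decay of each $\hat\phi_i$ and the slack afforded by the choice $\rho' = \rho^2/(2K^2)$, delivers the recursive bound $\Delta(h, K; \rho) \gg \rho^{3 + K(d+\eps)} \Delta(h, K-1; \rho^2/(2K^2))$.

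\textbf{Main obstacle.} The principal difficulty lies in controlling $T(k)$ for $k \neq 0$ in the inductive step. When the frequencies $k\alp_K + \sum_{i<K} k_i \alp_i$ simultaneously land on major arcs, the asymptotic formulas from Lemma \ref{lem4.3} must be aggregated with care --- involving Gauss sums and oscillatory integrals whose parameters depend intricately on the $\alp_i$ --- so that they do not overwhelm the main contribution. Ensuring that the aggregate error is a factor of $\rho^{2 + K(d+\eps)}$ smaller than the main term, while preserving the precise recursive dependence on $\rho$ and $K$, is the technically demanding part of the argument, and is what ultimately determines the slightly unusual exponents $d+3+\eps$ in the base case and $3+K(d+\eps)$ in the recursion.
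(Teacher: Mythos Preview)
Your outline has a genuine gap: you never invoke the hypothesis that $h$ is intersective of the second kind, and without it the theorem fails. Take $K=1$ with $\alp=a/q$ rational and $\rho<1/(2q)$; then $p\in\cB$ forces $q\mid h_D((p-r_D)/D)$, and for a non-intersective $h$ there may be no residue class modulo $q$ achieving this among integers coprime to $q$, so $\cB$ can be essentially empty. In your expansion this shows up as follows: every $S(k)$ with $k\ne 0$ is then on a major arc and has size $\asymp P/\varphi(D)$, and ``cancels in aggregate'' is not an argument --- the aggregate $\sum_k\hat\phi(k)S(k)$ is exactly $\sum_p\log p\cdot\phi(\alp h(p)/\lam(D))$, whose positivity is the whole point and cannot be established without intersectivity. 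Your inductive step inherits and amplifies the problem: Fourier-expanding the remaining $K-1$ constraints inside $T(k)$ produces $\rho^{-O(K)}$ many pure exponential sums, each potentially of full size $P/\varphi(D)$ on major arcs, against a main term that is only $\rho\,\Delta(h,K-1;\rho')P/\varphi(D)$. The difficulty you flag as ``technically demanding'' is not a matter of bookkeeping; it is where the missing hypothesis must enter.

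The paper's route is a dichotomy, not a direct estimate. If $\cB$ is not already large then $\cC$ is, and Lemma~\ref{lem9.3} (Harman) extracts a \emph{single} nonzero $\mathbf{m}\in\bZ^K$ with $\|\mathbf{m}\|_\infty\le K/\rho$ for which the Weyl sum at $\theta=\mathbf{m}\cdot\balp$ is large; Lemma~\ref{lem9.4} then forces $q\ll L^{d+\eps}$ and $\|q\theta\|\ll qL^d\lam(D)/P^d$ with $L\asymp\rho^{-K}$. Intersectivity now enters \emph{constructively}. In the base case one restricts to primes $p\le P/L^2$ with $p\equiv r_{qmD}\pmod{qmD}$: then $qm\lam(D)\mid h(p)$, so $\|\alp h(p)/\lam(D)\|\le|h(p)/(qm\lam(D))|\cdot\|qm\alp\|\ll L^{-d}<\rho$, and Siegel--Walfisz counts these primes. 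In the inductive step, writing $\mathbf{m}=(\mathbf{m}',m_K)$, one restricts to $p\equiv r_{qm_KD}\pmod{qm_KD}$ with $p\le P/L^2$ and applies the induction hypothesis (to a rescaling of $\balp'$, with radius $\rho^2/(2K^2)$) to force $\|h(p)\balp'/(m_K\lam(D))\|<\rho^2/(2K^2)$; the major-arc relation $m_K\alp_K\approx a/q-\mathbf{m}'\cdot\balp'$, combined with $qm_K\lam(D)\mid h(p)$, then controls $\|\alp_K h(p)/\lam(D)\|$. The exponents in the recursion arise transparently from $q\ll L^{d+\eps}$, $m_K\ll\rho^{-1}$, and the range shrinkage $P\mapsto P/L^2$, not from any delicate cancellation.
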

	
	We demonstrate the utility of this result by using it to complete the proof of Theorem \ref{thm8.1}.
	
	\begin{proof}
		[Proof of Theorem \ref{thm8.1} given Theorem \ref{thm9.2}]
		As usual, we fix the parameters
		\begin{equation*}
			\delta,h,r,L_1,L_2
		\end{equation*}
		appearing in the statement of Theorem \ref{thm8.1}. Unless specified otherwise, we allow all forthcoming implicit constants to depend implicitly on these parameters. 
		Let $\sig,\eta\in(0,1)$ and let $\cF:\R_{\geqslant 0}\to\R_{\geqslant 0}$ be a monotone increasing function, all three of which depend only on the fixed parameters. Let $D\in\N$, and assume throughout this proof that $Z\in\N$ and $N:=h_D(Z)$ are sufficiently large with respect to all of these quantities.
		
		For each $i\in[r]$, let $\cA_i\subseteq [N]$ with $|\cA_i|\geqslant\delta N$. Suppose we have an $r$-colouring 
		\[
		\{ z \in [\eta Z,Z]: 
		r_D + Dz \in \cP \}
		= \cC_1 \cup \cdots \cup \cC_r.
		\]
		In the notation of the previous section, our goal is to find $k\in\N$ such that
		\begin{equation*}
			\Phi(1_{\cA_i};1_{\cC_k}\circ\cQ_Z) \gg 
			N^{s-1} 
			\left(\frac{DZ}{\varphi(D)\log Z} \right)^t \qquad(1\leqslant i\leqslant r).
		\end{equation*}
		
		Lemma \ref{lem8.2} provides us with decompositions
		\begin{equation*}
			1_{\cA_i} = f_\str^{(i)} + f_\sml^{(i)} + f_\unf^{(i)} \qquad (1\leqslant i\leqslant r),
		\end{equation*}
		along with a positive integer $K\ll_{\sigma,\cF} 1$ and a phase $\btheta\in\T^K$ with the properties described therein. Let $\eta_0=\eta_0(d,L_1,L_2,\delta)$ be as defined in Lemma \ref{lem9.1}, and let
		\begin{equation*}
			\Omega :=\{ z\in[\eta Z, \eta_0 Z]: r_D + Dz \in \cP, \: \lVert h_D(z)\btheta\rVert < \sigma/K\}\subseteq \cC_1\cup\cdots\cup\cC_r.
		\end{equation*}
		By choosing $\sigma$ sufficiently small, Lemma \ref{lem9.1} informs us that
		\begin{equation*}
			\Phi(f_\str^{(i)}+f_\sml^{(i)}; 1_{\cC_j}\circ\cQ_Z) \gg N^{s-1}|\Omega\cap\cC_j|^t \qquad (1\leqslant i,j\leqslant r).
		\end{equation*}
		We now claim that, for an appropriate choice of $\eta<\eta_0$, the set $\Omega$ satisfies
		\begin{equation}
			\label{eqn9.2}
			|\Omega| \gg_K \frac{DZ}{\varphi(D)\log Z}.
		\end{equation}
		Assume for the moment that this is true. By the pigeonhole principle, we can choose $k\in[r]$ such that $r|\Omega\cap\cC_k|\geqslant|\Omega|$, whence
		\begin{equation*}
			\Phi(f_\str^{(i)}+f_\sml^{(i)}; 1_{\cC_k}\circ\cQ_Z) \gg_K
			N^{s-1} 
			\left(\frac{DZ}{\varphi(D)\log Z} \right)^t \qquad (1\leqslant i\leqslant r).
		\end{equation*}
		Incorporating Lemma \ref{lem8.4} furnishes the bound
		\begin{equation*}
			\Phi(1_{\cA_i};1_{\cC_k}\circ\cQ_Z) \gg N^{s-1}\left(\frac{DZ}{\varphi(D)\log Z} \right)^t \left(c(K) - \cF(K)^{-1/(2s+2t)}\right),
		\end{equation*}
		for some positively-valued function $c(K)>0$ whose value depends only on the fixed parameters and $K$. Specifying $\cF:\R_{\geqslant 0}\to\R_{\geqslant 0}$ to be a monotone increasing function which obeys
		\begin{equation*}
			2\cF(y)^{-1/(2s+2t)} \leqslant c(y) \qquad (y\in\N)
		\end{equation*}
		then finishes the proof of Theorem \ref{thm8.1}, subject to our claim.
		
		It remains to establish \eqref{eqn9.2}. Let
		\begin{equation*}
			P = \eta_0 DZ + r_D.
		\end{equation*}
		Let $\rho=\sigma/K$,
		and for each $\xi\in(0,1)$ put
		\begin{equation*}
			\cD_\xi := \cP\cap\{p\in[\xi P,P]:p\equiv r_D\mmod{D}, \;\lVert \btheta h(p)/\lam(D) \rVert < \rho \} = [\xi P,P]\cap\cB_h(\btheta,\rho).
		\end{equation*}
		By Theorem \ref{thm9.2} and the Siegel-Walfisz theorem, there exists $\xi\in(0,1)$ with $\xi\gg\Delta(\rho)$ such that
		\begin{equation*}
			\sum_{p\in\cD_\xi}\log p \geqslant \frac{\Delta(\rho)P}{2\varphi(D)} \gg_\rho \frac{P}{\varphi(D)}.
		\end{equation*}
		Choosing $\eta = \eta_0\xi/2$, we ensure that the injective function $y\mapsto (y-r_D)/D$ maps $[\xi P,P]$ into $[\eta Z, \eta_0 Z]\subseteq [\eta Z, Z]$ and maps $\cD_\xi$ into $\Omega$.
		Since $\rho=O_K(1)$, we therefore conclude that
		\begin{equation*}
			|\Omega|\geqslant \sum_{p\in\cD_\xi}
			\frac{\log p}{\log P} \gg_K \frac{DZ}{\varphi(D)\log Z},
		\end{equation*}
		as claimed.
	\end{proof}
	
	\subsection{Exponential sums}
	
	To study prime polynomial Bohr sets, we are interested in exponential sums over primes of the form
	\begin{equation*}
		\sum_{\substack{p \le P \\
				p \equiv r_D \mmod D}}
		e \left(
		\frac{h(p) \theta}{\lam(D)} \right) \log p,
	\end{equation*}
	where $\theta\in\T$ and $h$ is intersective of the second kind. L\^{e} and Spencer \cite{LS2014} analysed properties of sums of this form to obtain estimates for the smallest element of a prime polynomial Bohr set (\ref{eqn9.1}), showing in particular that these sets are always non-empty. Our goal is to obtain a lower bound for the densities of these Bohr sets which does not depend on the choice of phase $\balpha$.
	
	Following \cite{LS2014}, our argument begins with the observation that if the prime polynomial Bohr set (\ref{eqn9.1}) has few elements, then we can construct a corresponding exponential sum which is particularly large. This is elucidated by the following lemma, which is a consequence of a much more general result of Harman \cite{Har1993}.
	
	\begin{lemma}\label{lem9.3}
		Let $D,K,P\in\N$. Let $h$ be an integer polynomial of degree $d\in\N$ which is intersective of the second kind. Define $r_D$ and $\lambda(D)$ as in \S\ref{subsec2.4}. Let $\rho\in (0,1)$, $\balpha\in\T^K$, and
		\begin{equation*}
			\cC = \cC_h(\balpha,\rho):= \cP\cap\left \{ p \le P: p \equiv r_D \mmod D, 
			\left \| \frac{h(p)}{\lam(D)} \balp \right \| \geqslant \rho \right \}.
		\end{equation*}
		Then there exists $\mathbf{m}\in\Z^K$ with $0<\lVert \mathbf{m}\rVert_{\infty} \leqslant K\rho^{-1}$ such that
		\begin{equation*}
			(2K + 1)^K\left\lvert
			\sum_{p\in\cC}
			e \left(\frac{h(p) \mathbf{m}\cdot\balpha}{\lam(D)} \right) 
			\log p\right\rvert \geqslant \frac{\rho^K}{4K^2 - 1}
			\sum_{p\in\cC}\log p.
		\end{equation*}
	\end{lemma}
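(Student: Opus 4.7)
The plan is to prove Lemma~\ref{lem9.3} by a Fourier-analytic ``inverse Weyl'' argument of the type developed by Harman~\cite{Har1993}. Set $S := \sum_{p \in \cC}\log p$ (which we may assume positive, else the claim is vacuous) and $x_p := h(p)\balpha/\lam(D) \in \T^K$, so that by hypothesis $\lVert x_p \rVert \ge \rho$ for every $p \in \cC$. Write $T(\mathbf{m}) := \sum_{p \in \cC} e(\mathbf{m} \cdot x_p)\log p$; the target is then to find a nonzero $\mathbf{m}$ with $\lVert \mathbf{m} \rVert_\infty \le K\rho^{-1}$ such that $|T(\mathbf{m})| \ge \rho^K S / ((2K+1)^K(4K^2-1))$.

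The heart of the matter is to construct a real trigonometric polynomial $F : \T^K \to \bR$ with Fourier support contained in $\{\mathbf{m} \in \bZ^K : \lVert \mathbf{m} \rVert_\infty \le K\rho^{-1}\}$ satisfying
\begin{enumerate}
\item[(i)] $F(\mathbf{x}) \ge A$ whenever $\lVert \mathbf{x} \rVert \ge \rho$;
\item[(ii)] $A - \hat F(\mathbf{0}) \ge \rho^K/(4K^2-1)$;
\item[(iii)] $\sum_{\mathbf{m} \neq \mathbf{0}}|\hat F(\mathbf{m})| \le (2K+1)^K$.
\end{enumerate}
A suitable $F$ can be built as a product (or a linear combination) of one-dimensional Fej\'er-type kernels, one per coordinate, each of degree at most $K\rho^{-1}$, exploiting the fact that $\lVert \mathbf{x} \rVert \ge \rho$ forces at least one coordinate to be $\rho$-separated from $\bZ$. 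The numerical constants $K\rho^{-1}$, $(2K+1)^K$ and $4K^2 - 1$ are precisely the ones arising from Harman's classical construction, so (i)--(iii) can be extracted from the relevant chapter of~\cite{Har1993}.

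Given such an $F$, testing it against the weighted measure $\sum_{p \in \cC}\log p \cdot \delta_{x_p}$ yields
\[
\sum_{\mathbf{m}} \hat F(\mathbf{m}) T(\mathbf{m}) = \sum_{p \in \cC} F(x_p)\log p \ge A S,
\]
by (i). Isolating the $\mathbf{m} = \mathbf{0}$ contribution $\hat F(\mathbf{0}) S$ and applying (ii) gives
\[
\sum_{0 < \lVert \mathbf{m} \rVert_\infty \le K/\rho} \hat F(\mathbf{m}) T(\mathbf{m}) \ge \bigl(A - \hat F(\mathbf{0})\bigr) S \ge \frac{\rho^K}{4K^2-1} S.
\]
The triangle inequality together with (iii) now gives a single nonzero frequency $\mathbf{m}$ with $\lVert \mathbf{m} \rVert_\infty \le K\rho^{-1}$ satisfying
\[
|T(\mathbf{m})| \ge \frac{(A - \hat F(\mathbf{0})) S}{\sum_{\mathbf{m} \neq \mathbf{0}}|\hat F(\mathbf{m})|} \ge \frac{\rho^K S}{(2K+1)^K(4K^2-1)},
\]
which rearranges to the claimed inequality.

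The genuine obstacle is Step~(2): producing the polynomial $F$ with the three numerical invariants matching exactly $K\rho^{-1}$, $(2K+1)^K$ and $\rho^K/(4K^2-1)$. This is a careful harmonic-analytic balancing act---trade-offs between the degree of a kernel, its $L^\infty$ peak, and its $\ell^1$ Fourier mass---but it is precisely the content of Harman's framework, whence the lemma. Once $F$ is in hand, the Fourier inversion, extraction of the zero mode, and weighted pigeonhole are entirely routine.
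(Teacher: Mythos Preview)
Your approach is essentially the paper's: both simply invoke Harman~\cite{Har1993} (the paper via the contrapositive of the Corollary to Lemma~5 together with the pigeonhole principle), and your sketch of the Fourier-analytic mechanism behind that result is correct in spirit. One caveat: your condition (iii), that $\sum_{\mathbf{m}\ne\mathbf{0}}|\hat F(\mathbf{m})|\le(2K+1)^K$ independently of $\rho$, is suspect for a kernel whose Fourier support has $\asymp(K/\rho)^K$ points, so your packaging of the three constants into (i)--(iii) may not mirror Harman's argument exactly; but since you explicitly defer the construction of $F$ to~\cite{Har1993} anyway, this is an expository quibble rather than a genuine gap.
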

	
	\begin{proof}
		If $\rho > 1/2$, then $\cC$ is empty and both sides of the desired inequality equal zero. If $\rho \le 1/2$, then the result follows from the contrapositive of \cite[Corollary to Lemma 5]{Har1993} and the pigeonhole principle.
	\end{proof}
	
	For the purpose of proving Theorem \ref{thm9.2}, we may assume that the Bohr set $\cB_h(\balpha,\rho)$ has too few elements, in a manner that will be clarified in due course. Then we can apply Lemma~\ref{lem9.3} to find some $L\ll_K \rho^{-K}$ and $\mathbf{m}\in\Z^K$ of bounded size such that $\theta = \mathbf{m}\cdot\balpha$ satisfies
	\begin{equation}
		\label{eqn9.3}
		\left|
		\sum_{\substack{p \le P \\
				p \equiv r_D \mmod D}}
		e \left(
		\frac{h(p) \theta}{\lam(D)} \right) \log p \right|
		\geqslant \frac{P}{L\varphi(D)}.
	\end{equation}
	
	Our next task is to investigate the consequences of (\ref{eqn9.3}). As discussed in \S\ref{sec4}, an exponential sum being large is indicative of the phase $\theta$ exhibiting `major arc' behaviour, meaning that $\theta$ is well-approximated by a rational number with small denominator. This is made precise in the following lemma.
	
	\begin{lemma} [Low major arc]
		\label{lem9.4}
		Suppose $L,P\in\N$ and $\theta\in\R$ satisfy (\ref{eqn9.3}). Assume that $P$ is sufficiently large relative to $D,h$ and $L$. Then there exists $q \in \bN$ such that
		\[
		q \ll_{h,\eps} L^{d+\eps},
		\qquad \| q\theta \| \ll_h qL^d \lam(D) / P^d.
		\]
	\end{lemma}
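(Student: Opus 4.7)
The strategy is to combine Dirichlet's approximation theorem with a sharp minor-arc bound for exponential sums over primes, arguing by contraposition: if no rational approximation of the desired quality existed, the exponential sum in \eqref{eqn9.3} would be too small. Writing $f(p) := h(p)\theta/\lam(D)$, the phase is a polynomial in $p$ of degree $d$ whose leading coefficient is $\alp := \ell_h\theta/\lam(D)$, where $\ell_h$ denotes the leading coefficient of $h$. Apply Dirichlet's approximation theorem to $\alp$ with cutoff $Q := cP^d/L^d$ for a suitably small constant $c>0$, producing coprime integers $q_0 \in \bN$ and $a_0 \in \bZ$ satisfying $q_0 \le Q$ and $|q_0\alp - a_0| \le 1/Q = L^d/(cP^d)$. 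Taking $q := q_0\ell_h$, we obtain
\[
\|q\tet\| \le |q_0\ell_h\tet - a_0\lam(D)| = \lam(D)|q_0\alp - a_0| \le \lam(D)L^d/(cP^d),
\]
and this is at most $qL^d\lam(D)/P^d$ once $c \le 1/\ell_h$. Consequently, the entire task reduces to showing that $q_0 \ll_{h,\eps} L^{d+\eps}$.

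Suppose for contradiction that $q_0 > L^{d+\eps}$. Since $q_0 \le Q$, we also have $|\alp - a_0/q_0| \le 1/(q_0 Q) \le 1/q_0^2$, placing $\alp$ in the standard Dirichlet-type neighbourhood of $a_0/q_0$. The plan is then to invoke a sharp Weyl--Vinogradov estimate for exponential sums over primes with polynomial phase of degree $d$, namely
\[
\left|\sum_{\substack{p \le P \\ p \equiv r_D \mmod{D}}} e(f(p))\log p \right| \ll_{h,D,\eps} P^{1+\eps}\bigl(q_0^{-1} + P^{-1} + q_0 P^{-d}\bigr)^{1/d},
\]
which follows from a Vaughan-identity decomposition into Type I and Type II sums, combined with the optimal form of Vinogradov's mean value theorem (due to Bourgain--Demeter--Guth and Wooley). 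Comparing with the hypothesised lower bound $|S|\ge P/(L\varphi(D))$ and absorbing $\varphi(D)$ into the implicit constant yields $q_0^{-1} + q_0P^{-d} \gg_{h,D} L^{-d}P^{-d\eps}$, the term $P^{-1}$ being negligible once $P$ is large in terms of $L$ and $D$. The possibility $q_0 P^{-d} \gg L^{-d}P^{-d\eps}$ would force $q_0 > Q$, contradicting Dirichlet; so $q_0^{-1} \gg L^{-d}P^{-d\eps}$, and thus $q_0 \ll L^{d+\eps}$ after mild adjustment of $\eps$, delivering the required contradiction.

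The central obstacle is securing the Weyl exponent $1/d$ in the minor-arc bound above: the classical Weyl exponent $1/2^{d-1}$ would give only $q \ll L^{2^{d-1}+\eps}$, which is too weak in degree $d \ge 3$. Lemma \ref{lem4.2} of the present paper yields only logarithmic savings, so a genuinely sharper input from Vinogradov's mean value theorem is required. Once the sharp minor-arc bound is in hand, however, the deduction is a routine comparison of upper and lower bounds coupled with the arithmetic passage from the leading coefficient $\alp$ back to the original phase $\tet$.
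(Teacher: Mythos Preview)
Your strategy diverges from the paper's in a crucial way, and the divergence is precisely where the gap lies. You attempt to extract the exponent $1/d$ directly from a minor-arc estimate for the prime exponential sum, and you candidly flag this as ``the central obstacle''. It is more than an obstacle: a bound of the shape
\[
\sum_{\substack{p \le P \\ p \equiv r_D \mmod D}} e(f(p))\log p \ll_{h,D,\eps} P^{1+\eps}\bigl(q_0^{-1} + P^{-1} + q_0 P^{-d}\bigr)^{1/d}
\]
is not a known consequence of Vinogradov's mean value theorem. Even for Weyl sums over the integers, the optimal exponent obtained via VMVT is $1/(d(d-1))$, not $1/d$; transferring to primes in progressions via Vaughan's identity does not improve matters. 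So the displayed inequality is not merely unproven here --- it is not available in the literature in this strength. Moreover, even granting it, your comparison yields $q_0 \ll L^d P^{d\eps}$, and the stray $P^{d\eps}$ cannot be absorbed into $L^{\eps}$ since $P$ is large relative to $L$; ``mild adjustment of $\eps$'' does not help.

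The paper circumvents this entirely. It first uses the \emph{weak} minor-arc bound of Lemma~\ref{lem4.2}, which saves only an arbitrary power of $\log P$, to place $\theta$ on a major arc with $q \ll (\log P)^{2C_d}$. On that major arc, Lemma~\ref{lem4.3} gives the asymptotic $\hat S \approx S(q,a;D)I(\beta)/\varphi(Dq)$, and the exponent $1/d$ is then extracted from each factor separately: from the \emph{complete} exponential sum via Lemma~\ref{lem2.2} (which genuinely delivers $S(q,a;D) \ll_{h,\eps} q^{1-1/d+\eps}$, using Lucier's uniform bound on $\cont(h_D)$), and from the oscillatory integral via van der Corput's lemma. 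Comparing these with the hypothesis \eqref{eqn9.3} gives $q \ll_{h,\eps} L^{d+\eps}$ and $|\beta| \ll (L/P)^d\lam(D)$ directly. The point is that the $1/d$ saving is a feature of complete sums and oscillatory integrals, not of minor-arc estimates over primes; your approach looks for it in the wrong place.
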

	
	\begin{proof}
		By \eqref{eqn9.3} and Lemma \ref{lem4.2}, we can find $r\in\N$ and $b\in\Z$ such that
		\[
		\max \left \{ r, P^d 
		\left| 
		r \frac{\theta}
		{\lam(D)} - b
		\right|
		\right \}
		\ll (\log P)^{2 C_d}.
		\]
		Let $q \in \bN$ and $a \in \bZ$ with
		\[
		(a,q) = 1, \qquad
		\frac{\lam(D) b}{r} = 
		\frac a q.
		\]
		Then 
		\begin{equation*}
			q \le r \ll 
			(\log P)^{2 C_d}, \qquad
			|q \theta - a| \le 
			|r \theta - 
			\lam(D) b| \ll 
			\frac{\lam(D) (\log P)^{2C_d}} {P^d}.
		\end{equation*}
		Put $\bet = \theta - a/q$. Applying Lemma \ref{lem4.3} with 
		$(f,G,b,m,Q)
		= (h, 1, r_D, D, \lambda(D))$ 
		gives
		\[
		\sum_{\substack{p \le P \\ p \equiv r_D \mmod D}} e \left(
		\frac{\theta h(p)}{\lam(D)} \right) \log p = I(\bet) \frac{S(q,a;D)}{\varphi(Dq)} + 
		O_h(Pe^{-c \sqrt{\log P}}),
		\]
		where
		\[
		I(\bet) = \int_2^P e\left(
		\frac{\bet h(t)}{\lam(D)}\right) \d t,
		\qquad
		S(q,a;D) = \sum_{\substack{t \mmod{Dq} \\ (t,q) = 1 \\ t \equiv r_D \mmod D}} e \left(
		\frac{a h(t)}
		{q\lam(D)}
		\right).
		\]
		Thus, by (\ref{eqn9.3}), we have
		\begin{equation}
			\label{eqn9.4}
			|S(q,a;D) I(\bet)|
			\gg \frac{P \varphi(Dq)}{L \varphi(D)}.
		\end{equation}
		
		In light of the trivial bound $|I(\beta)|\leqslant P$, we now have
		\[
		|S(q,a;D)| \gg \frac{\varphi(Dq)}{L \varphi(D)} \ge \frac{\varphi(q)}{L}.
		\]
		By \cite[Lemma 28]{Luc2006}, the GCD of the non-constant coefficients of
		\[
		h_D(x) := \frac{h(r_D + Dx)}{\lam(D)} \in \bZ[x]
		\]
		is $O_h(1)$. Now Lemma \ref{lem2.2} yields
		\begin{equation*}
			S(q,a;D) \ll_{h,\eps} q^{1+\eps-1/d},
		\end{equation*}
		and we conclude that
		\[
		q \ll_{h,\eps} L^{d+\eps}.
		\]
		It remains to bound 
		$\lVert q\theta\rVert$. 
		
		Observe that
		\begin{align*}
			\psi: (\bZ/Dq\bZ)^\times &\to (\bZ/D\bZ)^\times \\
			[t] &\mapsto [t]
		\end{align*}
		defines a surjective group homomorphism, and that $\psi^{-1}(r_D)$ is a coset of $\ker(\psi) \le (\bZ/Dq\bZ)^\times$. Therefore
		\begin{align*} |S(q,a;D)| &\le
			|\psi^{-1}(r_D)| = |\ker(\psi)| =
			\frac{\varphi(Dq)}{\varphi(D)}.
		\end{align*}
		Pairing this with \eqref{eqn9.4} yields
		\[
		|I(\bet)| \gg P/L.
		\]
		
		A change of variables gives
		\[
		I(\bet) = D \int_0^{P/D} 
		e(\bet h_D(z)) \d z + O(D),
		\]
		and now \cite[Theorem 7.3]{Vau1997} yields
		\[
		I(\bet) \ll_h (|\bet|/\lam(D))^{-1/d} + D.
		\]
		Consequently
		\[
		\bet \ll (L/P)^d \lam(D),
		\]
		and finally
		\[
		\| q\theta \| \ll
		q L^d \lam(D) /P^d.
		\]
	\end{proof}
	
	\subsection{Proof of Theorem \ref{thm9.2}}
	
	Let $K\in\N$. Writing $\cB=\cB_h(\balpha,\rho)$ and $\cC=\cC_h(\balpha,\rho)$, we deduce from the Siegel--Walfisz theorem (Theorem \ref{thm2.1}) that
	\[
	\sum_{p \in \cB} \log p + \sum_{p \in \cC} \log p = \sum_{\substack{p \le P \\
			p \equiv r_D \mmod D}}\log p \asymp \frac{P}{\varphi(D)}.
	\]
	Suppose $\balpha\in\T^K$ is such that the sum over $p\in\cC$ in the above is larger that than the sum over $p\in\cB$. If no such $\balpha$ were to exist, then Theorem \ref{thm9.2} would hold with $\Delta(h,K,\rho)\gg 1$. 
	For this choice of $\balpha$, we infer from Lemma \ref{lem9.3} the existence of some $\mathbf{m}\in\Z^K$ with $0<\lVert \mathbf{m}\rVert_{\infty}\leqslant K\rho^{-1}$ and some $L\ll_K \rho^{-K}$ such that (\ref{eqn9.3}) holds with $\theta=\mathbf{m}\cdot\balpha$. By increasing the value of $L$ if necessary---which only weakens the bound (\ref{eqn9.3}) that we have obtained---we may assume that $L = C\rho^{-K}$ for some suitably large constant $C=C(h,K)>K$ to be specified later. Applying Lemma \ref{lem9.4} supplies us with some $q\in\N$ such that
	\begin{equation}
		\label{eqn9.5}
		q \ll_{h,\eps} L^{d+\eps},
		\qquad \| q\mathbf{m} \cdot\balpha \| \ll \frac{qL^d \lam(D)}
		{ P^d}. 
	\end{equation}
	
	\bigskip
	
	To complete the proof, beginning from the above deductions, we proceed by induction on $K$. First, suppose that $K=1$ and write $m\in\Z$ in place of $\mathbf{m}\in\Z^K$. 
	As $h$ is intersective of the second kind, any integer $n \equiv r_{qmD} \mmod{qmD}$ satisfies
	\[
	h(n) \equiv 0 \mmod {\lam(qmD)},
	\qquad (n,qmD) = 1.
	\]
	Further, by the Siegel--Walfisz theorem and (\ref{eqn9.5}), we have
	\[
	\sum_{\substack{p \le P/L^2 
			\\ p \equiv r_{qmD}
			\mmod{qmD}}} \log p \asymp
	\frac{P/L^2}{\varphi(qmD)}
	\gg_{h,\eps} \frac{P}{L^{d+3+\eps} \varphi(D)}.
	\]
	Since $r_{qmD} \equiv r_D \mmod D$, every prime $p$ appearing in the sum on the left is congruent to $r_D$ modulo $D$. Using (\ref{eqn2.5}), we have $qm \lam(D) \mid \lam(qm) \lam(D) = \lam(qmD) \mid h(p)$ for each prime $p$ in the sum, whence
	\[
	\left\| \frac{\alp h(p)}{\lam(D)} \right\|
	\le \left|\frac{h(p)}
	{qm \lam(D)}\right| \| q m \alp \|
	\ll_h \frac{(P/L^2)^d}{q \lam(D)}
	\frac{q L^d \lam(D)}{P^d}
	= L^{-d}
	= (\rho/C)^{d}.
	\]
	Taking $C$ sufficiently large, we deduce that $p \in \cB$. We conclude that
	\[
	\sum_{p\in\cB}\log p \geqslant
	\sum_{\substack{p \le P/L^2 
			\\ p \equiv r_{qmD}
			\mmod{qmD}}} \log p
	\gg_{h,\eps} \frac{P}{L^{d+3+\eps} \varphi(D)} \gg_h \frac{\rho^{d+3+\eps}P}{\varphi(D)},
	\]
	as required.
	
	\bigskip
	
	Now suppose $K\geqslant 2$ and assume the induction hypothesis that Theorem \ref{thm9.2} holds with $K-1$ in place of $K$. Write  $\mathbf{m}=(\mathbf{m}',m_K)$ and $\balpha=(\balpha',\alpha_K)$. The induction hypothesis, applied to
	\[
	\frac{\lam(q m_K)}{m_K} \balp'
	\]
	informs us that the set
	\[
	\cA = \left \{
	p \le P/L^2: p \equiv r_{qm_K D} \mmod{qm_K D}, \;
	\left \|
	\frac{h(p)}{m_K \lam(D)} \balp'
	\right \| < \frac{\rho^2}{2K^2} \right \}
	\]
	satisfies
	\begin{align*}
		\sum_{p\in\cA}\log p &\geqslant \frac{P}{L^2\varphi(qm_K D)}
		\Delta
		\left(h,K-1; \frac{\rho^2}{2K^2}\right) \\
		&\gg_{h,K,\eps} 
		\frac{P}{\varphi(D)}
		\rho^{3 + K(d+\eps)} \Delta
		\left(h,K-1; \frac{\rho^2}{2K^2}\right).
	\end{align*}
	
	Let $a\in\Z$ be such that
	\begin{align*}
		\lVert q\mathbf{m}\cdot\balpha\rVert = |q\mathbf{m}\cdot
		\balpha - a|.
	\end{align*}
	For each $p\in\cA$, we have $h(p) \ll_h (P/L^2)^d$ so, by (\ref{eqn9.5}),
	\[
	\left| \alp_K \frac{h(p)}{\lam(D)} - \frac{a/q - \mathbf{m}' \cdot \balp'}{m_K} \frac{h(p)}{\lam(D)}
	\right| = \left\lvert \frac{h(p)}{m_K\lambda(D)}\right\rvert\cdot \lvert\mathbf{m}\cdot\balpha - a/q\rvert
	\ll_h L^{-d}.
	\]
	As in the previous case, we have $qm_K\lambda(D) \mid h(p)$, whence
	\begin{equation*}
		\left\lVert \frac{a/q - \mathbf{m}' \cdot \balp'}{m_K} \frac{h(p)}{\lam(D)}\right\rVert = \left\lVert \frac{h(p)\mathbf{m}' \cdot \balp'}{m_K\lam(D)}\right\rVert
		< \frac{\rho}{2}.
	\end{equation*}
	Thus, by the triangle inequality,
	\[
	\left \| 
	\alp_K \frac{h(p)}{\lam(D)}
	\right \| 
	< \frac{\rho}2 +
	O_h(L^{-d}) = 
	\frac{\rho}2 + O_h((C\rho^{-K})^{-d}).
	\]
	By taking $C$ sufficiently large, we find that $p\in\cB$. Therefore
	\begin{equation*}
		\sum_{p\in\cB}\log p \geqslant \sum_{p\in\cA}\log p \gg_{h,K,\eps} \frac{P}{\varphi(D)}
		\rho^{3 + K(d+\eps)}
		\Delta
		\left(h,K-1; \frac{\rho^2}{2K^2}\right),
	\end{equation*}
	as required.
	
	\providecommand{\bysame}{\leavevmode\hbox to3em{\hrulefill}\thinspace}
	
\end{document}